\newcommand{\RR}{\mathbb R}
\newcommand{\BB}{\mathbb B}
\newcommand{\VV}{\mathbb V}
\newcommand{\NN}{\mathbb N}
\newcommand{\ZZ}{\mathbb Z}
\newcommand{\TT}{\mathbb T}
\newcommand{\pat}{\partial_t}
\newcommand{\pax}{\partial_x}
\newcommand{\jeps}{\mathcal{J}_\vartheta*}
\newcommand{\jpi}{\mathcal{J}_\varpi*}
\newcommand{\pv}{\text{P.V.}\int_\RR}
\newcommand{\ueps}{u_\vartheta}
\newcommand{\upi}{u_\varpi}
\newcommand{\card}{\mathop{\mathrm{card}}\nolimits}
\DeclareMathOperator*{\sgn}{sgn}
\newcounter{comentcount}
\newcounter{teocount}
\newtheorem{lem}{Lemma}
\newtheorem{corol}{Corollary}
\newtheorem{teo}[teocount]{Theorem}
\newtheorem{defi}{Definition}
\title[A nonlocal KS equation]{On a nonlocal analog of the Kuramoto-Sivashinsky equation}
\author[R.Granero-Belinch\'{o}n]{Rafael Granero-Belinch\'{o}n}
\email{rgranero@math.ucdavis.edu}
\address{Department of Mathematics, University of California, Davis, CA 95616, USA}
\author[J. Hunter]{John K.\ Hunter}
\thanks{The first author receives financial support by the grant MTM2011-26696 from the former Ministerio de Ciencia e Innovaci\'on (MICINN, Spain). The second author was partially supported by the NSF under grant number DMS-1312342.}
\email{jkhunter@ucdavis.edu}
\address{Department of Mathematics, University of California, Davis, CA 95616, USA}
\begin{document}

\begin{abstract}
We study a nonlocal equation, analogous to the Kuramoto-Sivashinsky
equation, in which short waves are stabilized by a possibly fractional diffusion of order less than or equal to two, and long waves are destabilized by a backward fractional diffusion of lower order. We prove the global existence, uniqueness, and analyticity of solutions of the nonlocal equation and the existence of a compact attractor. Numerical results show that the equation has chaotic solutions whose spatial structure consists of interacting traveling waves resembling viscous shock profiles.
\end{abstract}

\maketitle

\textbf{Keywords}: Kuramoto-Sivashinsky equation, spatial chaos, attractor.



\section{Introduction}
In this paper, we study a family of nonlinear, nonlocal pseudo-differential equations
in one-space dimension for a function $u(x,t)$ given by
\begin{equation}\label{KS}
\pat u+\pax\left(\frac{1}{2} u^2\right)= \Lambda^\gamma u -\epsilon\Lambda^{1+\delta}u,
\end{equation}
where $\epsilon > 0$ and $\Lambda^s$ is the fractional derivative
\[
\Lambda^s=\left(-\pax^2\right)^{s/2}, \qquad \widehat{\Lambda^s u}=|\xi|^s\hat{u}.
\]
We assume that the exponents $\delta$, $\gamma$ satisfy
\begin{equation}\label{gammadelta}
0 < \delta \leq 1,\qquad 0 \le \gamma<1+\delta.
\end{equation}

Equation \eqref{KS} consists of an inviscid Burgers equation with a
higher-order linear pseudo-differential term that gives long-wave instability and short-wave stability. It is analogous to the well-known Kuramoto-Sivashinsky (KS) equation \cite{kuramoto1976persistent, sivashinsky1977nonlinear, sivashinsky1980flame}
\begin{equation}\label{usualKS}
\pat u+ \pax\left(\frac{1}{2} u^2\right)=-\pax^2 u-\epsilon\pax^4 u,
\end{equation}
which has negative second-order diffusion stabilized by forth-order diffusion.
By contrast, we consider \eqref{KS} in the parameter regime \eqref{gammadelta}, where the stabilizing diffusion is second-order or less.

A special case of \eqref{KS}, corresponding to $\gamma = \delta = 1$, is
\begin{equation}\label{KS11}
\pat u+\pax\left(\frac{1}{2} u^2\right)= \Lambda u + \epsilon\pax^2 u,
\end{equation}
which provides a simple model for the stabilization of a Hadamard instability, with growth rate
proportional to the absolute value of the wavenumber,
by second-order viscous diffusion. This type of instability occurs in scale-invariant systems,
such as conservation laws (e.g., the Kelvin-Helmholtz instability
for the Euler or MHD equations) and kinetic equations (e.g., the Vlasov equations), in which the growth rate of long waves is determined by a parameter with the dimensions of velocity.
In particular, \eqref{KS11} provides a model equation for the
negative Landau damping of plasma waves \cite{lee,ott}.

If $\gamma = 0$ and $\delta = 1$, then \eqref{KS}
is the Burgers-Sivashinsky (BS) equation introduced
by Goodman \cite{goodmankuramoto},
\begin{equation}\label{usualBS}
\pat u+\pax\left(\frac{1}{2}u^2\right) = u+\epsilon\pax^2 u.
\end{equation}
For \eqref{usualBS}, the growth rate of long waves is bounded independently of the wavenumber, and its
dynamical behavior is much simpler than that of \eqref{KS} with $\gamma > 0$.

The KS equation \eqref{usualKS} exhibits chaotic behavior and possesses a compact global attractor \cite{NSTkuramoto, NSTkuramoto2}. Furthermore, it has an inertial manifold \cite{FNSTkuramoto} that appears to contain a chaotic attractor when $\epsilon$ is sufficiently small.
(See \cite{BronskiKuramoto, colletkuramoto2, goodmankuramoto, giacomelliottokuramoto, ottokuramoto} for further results).
The spatial analyticity of solutions of the KS equation is addressed in \cite{colletkuramoto1, AnalyticityKuramotoGrujic} and the temporal analyticity in \cite{timeanalyticityKuramoto}. More recently, the authors in \cite{ArioliKochKuramoto, Figueras-DeLaLLave:cap-periodic-orbits-kuramoto, Mischaikowrigorous} have used computer-assisted methods to study the dynamics of the solutions.

In this paper, we prove that \eqref{KS} possesses a compact global attractor in the parameter range \eqref{gammadelta} (see Theorem~\ref{attractor}).
Moreover, numerical solutions indicate that if $0<\gamma <1+\delta$, then \eqref{KS} exhibits chaotic behavior with an interesting spatial structure. Waves that resemble thin viscous shocks appear spontaneously at different points, after which they propagate toward and merge with a primary viscous shock. This spatial behavior is qualitatively different from what one sees in the usual KS equation.
(See Section~\ref{sec:numerics}.)
By contrast, solutions of the BS equation \eqref{usualBS}, with $\gamma=0$, do not behave chaotically; instead, they approach a
time-independent viscous sawtooth wave solution as $t\to \infty$  \cite{goodmankuramoto}.

The numerical results suggest that \eqref{KS} with exponents \eqref{gammadelta} may have an inertial manifold that can be parametrized in some way by the viscous shocks. We do not investigate this question here, but in Section~\ref{sec:wild} we obtain an upper bound
on the number of oscillations in solutions of \eqref{KS} (see Theorem~\ref{th:wild_oscillations}).

Nonlocal KS equations similar to \eqref{KS} have been studied previously by Frankel and Roytburd \cite{frankelroytburd}.
Their results, however, are less detailed than ours and they apply only in the case when $\delta \ge 1$.
A different type of nonlocal generalization of the KS equation has been studied in
\cite{NonlocalKuramotoBronski, NonlocalKuramotoDuan}.

We conclude the introduction by outlining the contents of this paper. In Section~\ref{sec:global}, we prove the global existence of smooth solutions of \eqref{KS}, and in Section~\ref{sec:analyticity}, we prove that these solutions gain analyticity in a strip. In Sections~\ref{sec:absorbing}--\ref{sec:attractor}, we prove the existence of an attractor for \eqref{KS}, and in Section~\ref{sec:numerics}, we show some numerical solutions.

\section{Global existence of solutions}
\label{sec:global}
In this section we use a classical energy method to prove the global existence of solutions of the initial value problem for \eqref{KS},
\begin{align}\label{KSivp}
\begin{split}
&\pat u+\pax\left(\frac{1}{2} u^2\right)= \Lambda^\gamma u -\epsilon\Lambda^{1+\delta}u,\qquad \mbox{$x\in\Omega$, $t>0$},
\\
& u(x,0) = u_0(x),\qquad x\in \Omega.
\end{split}
\end{align}
We consider either spatially periodic solutions or solutions on the real line, with $\Omega=\TT$ or $\Omega=\RR$ as appropriate.
In the periodic case, we normalize the length of $\TT$ to $2\pi$.

To prove the existence result, we first obtain an \emph{a priori} $L^\infty$-estimate, using the ideas in \cite{cor2} to handle the nonlocal operators (see also \cite{AGM, c-g09}). This step of the proof depends on the choice of $\Omega$ and $\delta$ and is different in each case. In Section~\ref{sec:exist0} we obtain the existence of solutions for $0<\delta<1$. The gain of derivatives can be as small as $1/2+\delta/2$,
so the well-posedness results are more delicate than for the usual KS or BS equations. In Section~\ref{sec:exist1} we treat the simpler case $\delta=1$. To simplify the notation, we omit the $t$-dependence of $u$ when convenient and use $C$ to denote a (harmless) constant that can change from one line to another

First, we define what we mean by a weak solution of \eqref{KSivp}.
We denote the usual Sobolev spaces of functions with weak $L^2$-derivatives of the order less than or equal to $s$ by $H^s(\Omega)$, or $H^s$, and the real or periodic spatial Hilbert transform, with symbol $-i\sgn \xi$, by $\mathcal{H}$. In particular, $\Lambda = \mathcal{H}\partial_x$.

\begin{defi}\label{defi2b}
Let $T > 0$. A function $u(x,t)$ with
$$
u(x,t)\in L^2([0,T],H^{\frac{1+\delta}{2}}), \qquad\pat u(x,t)\in L^2([0,T],H^{-\frac{1+\delta}{2}})
$$
is a weak solution of \eqref{KSivp} if the following equality holds for all test functions $\phi\in H^{\frac{1+\delta}{2}}(\Omega)$,
\begin{align*}
&\int_\Omega\phi\, \pat u\, dx - \frac{1}{2}\int_\Omega \Lambda^{\frac{1+\delta}{2}}\phi\, \Lambda^{1-\frac{1+\delta}{2}} \mathcal{H}(u^2)\, dx\\
&\qquad
=\int_\Omega\Lambda^{\gamma/2}\phi\, \Lambda^{\gamma/2}u\, dx-\epsilon \int_\Omega\Lambda^{(1+\delta)/2}\phi\, \Lambda^{(1+\delta)/2}u\, dx
\quad\mbox{a.e.\ $0<t<T$},
\end{align*}
and $u(x,0)=u_0(x)$.
\end{defi}

We remark that the $L^2$-boundedness of $\mathcal{H}$, a Moser-type inequality \cite{taylor}, and Sobolev inequalities, imply that
\begin{align*}
\|\Lambda^{1-\frac{1+\delta}{2}} \mathcal{H}(u^2)\|_{L^2} &\le
\|\Lambda^{\frac{1-\delta}{2}}(u^2)\|_{L^2}\leq C\|u^2\|_{H^{\frac{1-\delta}{2}}}
\leq C\|u\|_{L^\infty} \|u\|_{H^{\frac{1-\delta}{2}}}
\\
&
\le  C\|u\|_{H^{\frac{1+\delta}{2}}} \|u\|_{H^{\frac{1-\delta}{2}}},
\end{align*}
so the nonlinear term in this weak formulation is well-defined.

\subsection{The case $0<\delta<1$}
\label{sec:exist0}
First, we consider spatially periodic solutions.
Since the mean of $u$ is preserved by the evolution, we can restrict ourselves to periodic initial data with zero mean,
\begin{equation*}
\int_\TT u_0(x)dx=0.
\end{equation*}

\begin{lem}\label{Linftytoro}
If $u(x,t)$ is a spatially periodic, smooth solution of \eqref{KSivp}, then
$$
\|u(t)\|_{L^\infty(\RR)}\leq \|u_0\|_{L^\infty(\RR)}\exp\left(C(\epsilon,\gamma,\delta) t\right).
$$
\end{lem}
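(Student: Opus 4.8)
The plan is to prove the $L^\infty$-bound via the Córdoba–Córdoba pointwise estimate for the fractional Laplacian, applied at the point where $|u|$ attains its maximum. First I would fix a time $t > 0$ and, since the solution is smooth and periodic, choose $x_t \in \TT$ at which $u(\cdot, t)$ attains its maximum over $\TT$ (treating $\max u$ and $-\min u$ separately, or equivalently working with $\pm u$). At such a point $\pax u(x_t, t) = 0$, so the nonlinear transport term $u\,\pax u$ vanishes there, and $\pax^2 u(x_t,t) \le 0$. The key tool is the inequality that for any $s \in (0,2]$ and any smooth function $f$ attaining a maximum at $x_t$, one has $\Lambda^s f(x_t) \ge 0$ (in fact $\ge c\|f\|_{L^\infty}^{1+s}/\|f\|_{\dots}$-type lower bounds are available, but here we only need the sign and, for the destabilizing term, an \emph{upper} bound). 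Concretely, $-\epsilon \Lambda^{1+\delta} u(x_t,t) \le 0$ kills the stabilizing term in a favorable direction, while the destabilizing term $\Lambda^\gamma u(x_t,t)$ must be bounded above by something like $C \|u(t)\|_{L^\infty}$ — but this is where one must be careful, because $\Lambda^\gamma$ with $\gamma > 0$ is not bounded on $L^\infty$.

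The heart of the argument, and the main obstacle, is controlling $\Lambda^\gamma u(x_t,t)$ at the maximum point. The idea from \cite{cor2} is to use the singular-integral representation
\[
\Lambda^\gamma u(x_t) = c_\gamma\, \text{P.V.}\int \frac{u(x_t) - u(x_t - y)}{|y|^{1+\gamma}}\, dy,
\]
split the integral into $|y| < R$ and $|y| > R$ for a parameter $R$ to be chosen, and exploit the fact that $u(x_t) - u(x_t-y) \ge 0$ everywhere (maximum point) to control the near-field by the stabilizing term and the far-field by $\|u\|_{L^\infty}$. Specifically, on $|y| > R$ one bounds $|u(x_t) - u(x_t - y)| \le 2\|u\|_{L^\infty}$, giving a contribution $\lesssim R^{-\gamma}\|u\|_{L^\infty}$; on $|y| < R$ one wants to dominate the (nonnegative) integrand by the corresponding piece of $\epsilon \Lambda^{1+\delta} u(x_t)$, which has the same nonnegative integrand but with weight $|y|^{-(1+(1+\delta))} = |y|^{-(2+\delta)}$, a stronger singularity. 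Since $\gamma < 1 + \delta$, we have $1+\gamma < 2 + \delta$, so $|y|^{-(1+\gamma)} \le R^{(1+\delta)-\gamma}|y|^{-(2+\delta)}$ for $|y| < R$; hence the near-field of $\Lambda^\gamma u$ is bounded by $c_\gamma R^{(1+\delta)-\gamma}/c_{1+\delta} \cdot \Lambda^{1+\delta}u(x_t)$ (more precisely the near-field piece of it, which is itself $\le \Lambda^{1+\delta}u(x_t)$ plus a harmless far-field correction, all of the correct sign).

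Putting this together, along the evolution of $M(t) := \|u(t)\|_{L^\infty}$ (which is Lipschitz in $t$ by Rademacher/standard arguments, with $\dot M(t) = \pat u(x_t,t)$ at a.e.\ $t$), we obtain
\[
\dot M(t) \le C R^{-\gamma} M(t) + \left(\frac{c_\gamma}{c_{1+\delta}} R^{(1+\delta)-\gamma} - \epsilon\right)\Lambda^{1+\delta}u(x_t,t) + (\text{far-field corrections} \lesssim M(t)).
\]
Choosing $R$ small enough that the coefficient of $\Lambda^{1+\delta}u(x_t)$ is nonpositive — i.e., $R^{(1+\delta)-\gamma} \le \epsilon c_{1+\delta}/c_\gamma$, which is possible precisely because $(1+\delta) - \gamma > 0$ — and recalling $\Lambda^{1+\delta}u(x_t,t) \ge 0$, that term drops out and we are left with $\dot M(t) \le C(\epsilon,\gamma,\delta)\, M(t)$. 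Grönwall's inequality then yields $M(t) \le M(0)\exp(C(\epsilon,\gamma,\delta)t)$, which is exactly the claim. I expect the technical care needed to make the P.V.\ splitting rigorous — justifying differentiation of $M$, handling the periodic (rather than whole-line) kernel, and bookkeeping the far-field remainder terms so that everything lands with the right sign — to be the fussiest part, but the mechanism is the scaling inequality $1 + \gamma < 2 + \delta$ that lets the super-critical-order diffusion absorb the lower-order anti-diffusion near the singularity.
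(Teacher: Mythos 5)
Your proposal is correct and follows essentially the same route as the paper: evaluate at the extremum (where the transport term vanishes), use the singular-integral representation of $\Lambda^\gamma$ and $\Lambda^{1+\delta}$, split into a near-field region where the combined kernel $c_\gamma|y|^{-(1+\gamma)}-\epsilon c_{1+\delta}|y|^{-(2+\delta)}$ is nonpositive (possible precisely because $\gamma<1+\delta$) so that the sign $u(x_t)-u(x_t-y)\ge 0$ makes that contribution nonpositive, bound the far field by $C\|u\|_{L^\infty}$, and conclude by Rademacher plus Gr\"onwall. The paper additionally bookkeeps the extra non-singular terms from the periodized kernel (the $|k|\ge 1$ images), which you correctly flag as a technical point and which are handled exactly as you anticipate, by a further $C\|u\|_{L^\infty}$ bound.
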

\begin{proof}
The fractional derivatives can be written as \cite{AGM}
\begin{align}\label{periodickernel}
\begin{split}
\Lambda^\alpha u(x)&=\frac{\Gamma(1+\alpha)\cos\left((1-\alpha)\frac{\pi}{2}\right)}{\pi}\pv \frac{u(x)-u(\eta)}{|x-\eta|^{1+\alpha}}d\eta\\
&=\frac{\Gamma(1+\alpha)\cos\left((1-\alpha)\frac{\pi}{2}\right)}{\pi}\sum_{k\in\ZZ}\text{P.V.}\int_\TT \frac{u(x)-u(\eta)}{|x-\eta-2k\pi|^{1+\alpha}}d\eta\\
&=\frac{\Gamma(1+\alpha)\cos\left((1-\alpha)\frac{\pi}{2}\right)}{\pi}\sum_{k\in\ZZ}\text{P.V.}\int_\TT \frac{u(x)-u(x-\eta)}{|\eta-2k\pi|^{1+\alpha}}d\eta
\end{split}
\end{align}
and
$$
\Lambda u(x)=\frac{1}{2\pi}\text{P.V.}\int_\TT \frac{u(x)-u(x-\eta)}{\sin^2\left(\frac{\eta}{2}\right)}d\eta.
$$

We start the proof with the case $\gamma=1$, for which we have a concise expression for the  kernel.
Let $x_t$ denote the point where $u(\cdot,t)$ attains its maximum, and suppose that the $L^\infty$-norm $\|u(t)\|_{L^\infty(\RR)}=u(x_t)$ is achieved at the maximum of $u$. A straightforward calculation shows that $u(x_t)$ is a Lipschitz continuous function of $t$, so Rademacher's Theorem \cite{evans} implies that $\|u(t)\|_{L^\infty(\RR)}$ is differentiable pointwise almost everywhere. Now we can apply the technique developed in \cite{AGM, cor2, c-g09, CGO}, to obtain the evolution of ${du(x_t)}/{dt}$. Using the expressions for the kernels, we get
\begin{align*}
&\frac{d}{dt}\|u(t)\|_{L^\infty(\RR)}\leq\frac{1}{2\pi}\text{P.V.}\int_\TT \left(u(x_t)-u(x_t-\eta)\right)\left(\frac{1}{\sin^2\left(\frac{\eta}{2}\right)}-\frac{1}{\left(\frac{\eta}{2}\right)^2}\right)d\eta\\
&\qquad+\frac{1}{2\pi}\text{P.V.}\int_\TT \left(u(x_t)-u(x_t-\eta)\right)\left(\frac{1}{\left(\frac{\eta}{2}\right)^2}
-\frac{2\epsilon\Gamma(2+\delta)\cos\left(\delta\frac{\pi}{2}\right)}{|\eta|^{2+\delta}}\right)d\eta.
\end{align*}
The first term is not singular and can be estimated as follows:
\begin{align*}
&I_1=\frac{1}{2\pi}\text{P.V.}\int_\TT \left(u(x_t)-u(x_t-\eta)\right)\left(\frac{1}{\sin^2\left(\frac{\eta}{2}\right)}-\frac{1}{\left(\frac{\eta}{2}\right)^2}\right)d\eta\\
&\qquad\qquad\leq \frac{2\|u(t)\|_{L^\infty(\TT)}}{\pi} \int_0^\pi \left(\frac{1}{\sin^2\left(\frac{\eta}{2}\right)}-\frac{1}{\left(\frac{\eta}{2}\right)^2}\right)d\eta
\\
&\qquad\qquad\leq \frac{8}{\pi^2}\|u(t)\|_{L^\infty(\TT)}.
\end{align*}
Notice that there exists $\omega=\omega(\delta,\epsilon)$ such that for $0<|\eta|\leq \omega$, we have
$$
\left(u(x_t)-u(x_t-\eta)\right)\left(\frac{1}{\left(\frac{\eta}{2}\right)^2}
-\frac{2\epsilon\Gamma(2+\delta)\cos\left(\delta\frac{\pi}{2}\right)}{|\eta|^{2+\delta}}\right)\leq 0
$$
We split the second term as
\begin{eqnarray*}
I_2&=&\frac{\text{P.V.}}{2\pi}\int_\TT \left(u(x_t)-u(x_t-\eta)\right)\left(\frac{1}{\left(\frac{\eta}{2}\right)^2}
-\frac{2\epsilon\Gamma(2+\delta)\cos\left(\delta\frac{\pi}{2}\right)}{|\eta|^{2+\delta}}\right)d\eta\\
&\leq& J_1+J_2
\end{eqnarray*}
with
\begin{eqnarray*}
J_1&=&\frac{\text{P.V.}}{2\pi}\int_{B(0,\omega)} \left(u(x_t)-u(x_t-\eta)\right)\left(\frac{1}{\left(\frac{\eta}{2}\right)^2}
-\frac{2\epsilon\Gamma(2+\delta)\cos\left(\delta\frac{\pi}{2}\right)}{|\eta|^{2+\delta}}\right)d\eta\\
&\leq& 0,
\end{eqnarray*}
and
\begin{eqnarray*}
J_2&=&\frac{1}{2\pi}\int_{B^c(0,\omega)} \left(u(x_t)-u(x_t-\eta)\right)\left(\frac{1}{\left(\frac{\eta}{2}\right)^2}
-\frac{2\epsilon\Gamma(2+\delta)\cos\left(\delta\frac{\pi}{2}\right)}{|\eta|^{2+\delta}}\right)d\eta\\
&\leq& C(\epsilon,\delta)\|u(t)\|_{L^\infty(\TT)},
\end{eqnarray*}
thus,
$$
I_2=J_1+J_2\leq J_2\leq C(\epsilon,\delta)\|u(t)\|_{L^\infty(\TT)}.
$$
The same argument applies if $\|u(t)\|_{L^\infty(\RR)}=-\min_{x\in \TT} u(x,t)$, so
$$
\|u(t)\|_{L^\infty(\RR)}\leq \|u_0\|_{L^\infty(\RR)}\exp\left(C(\epsilon,\delta) t\right).
$$

In the general case $\gamma\neq1$, some extra terms appear. These terms correspond to $|k|\geq1$ in \eqref{periodickernel}. Since they are not singular, they can be estimated as follows:
\begin{align*}
&\frac{\Gamma(1+\gamma)\cos\left((1-\gamma)\frac{\pi}{2}\right)}{\pi}\sum_{|k|\geq1}\text{P.V.}\int_\TT \frac{u(x_t)-u(x_t-\eta)}{|\eta-2k\pi|^{1+\gamma}}d\eta\\
&\qquad\qquad\leq C(\gamma)\|u(t)\|_{L^\infty(\TT)}.
\end{align*}
The rest of the proof remains unchanged.
\end{proof}

Next, we prove our main existence result.

\begin{teo}\label{teotoro}
Suppose that $\epsilon > 0$, $0 < \delta < 1$, and $0 \le \gamma<1+\delta$. If
\[
u_0\in H^\alpha(\TT)\cap L^\infty(\TT),
\]
then the following statements hold:
\begin{itemize}
\item If $\alpha\geq 2+\delta$, then for every $0<T<\infty$ the initial value problem \eqref{KSivp} has a unique classical solution
\[
u(x,t)\in C([0,T],H^\alpha(\TT)).
\]
\item If $(1-\delta)/2 < \alpha<2+\delta$, then for every $0< T <\infty$ there exists a weak solution of \eqref{KSivp} (see Definition \ref{defi2b}) such that
$$
u(x,t)\in L^\infty([0,T],H^\alpha(\TT)\cap L^\infty(\TT))\cap C([0,T],H^s(\TT)\cap L^p(\TT))
$$
for every $0\leq s<\alpha$ and $2\leq p<\infty$.
\item These solutions gain regularity and satisfy
$$
u(x,t)\in L^2([0,T],H^{\alpha+\frac{1+\delta}{2}}(\TT)).
$$
Moreover, if $3/2 < \alpha+(1+\delta)/2$, then this weak solution is unique.
\end{itemize}

\end{teo}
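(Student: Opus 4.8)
The plan is to treat the two ranges of $\alpha$ by the familiar two-step scheme: first construct global \emph{classical} solutions when $\alpha\ge 2+\delta$, and then, for $(1-\delta)/2<\alpha<2+\delta$, obtain the weak solutions by approximating $u_0$ with smooth data and passing to the limit by compactness. For $\alpha\ge 2+\delta$, the equation $\pat u+\epsilon\Lambda^{1+\delta}u=-\pax(\tfrac12 u^2)+\Lambda^\gamma u$ is a semilinear equation whose linear part generates the smoothing semigroup $e^{-\epsilon t\Lambda^{1+\delta}}$; since $1+\delta>1$ this gains more than the one derivative lost in $\pax(\tfrac12 u^2)$, so a contraction argument on the Duhamel formula in $C([0,T],H^\alpha(\TT))$ (using the Moser inequality on $u^2$) gives a unique local solution together with the blow-up alternative that either $T^\ast=\infty$ or $\|u(t)\|_{H^\alpha}\to\infty$ as $t\uparrow T^\ast$. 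Everything then rests on ruling out blow-up, for which I would combine the \emph{a priori} $L^\infty$-bound of Lemma~\ref{Linftytoro} with an $H^\alpha$ energy estimate.

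For that estimate, apply $\Lambda^\alpha$, pair with $\Lambda^\alpha u$ in $L^2(\TT)$, and integrate by parts:
\[
\frac12\frac{d}{dt}\|\Lambda^\alpha u\|_{L^2}^2=\|\Lambda^{\alpha+\gamma/2}u\|_{L^2}^2-\epsilon\|\Lambda^{\alpha+\frac{1+\delta}{2}}u\|_{L^2}^2-\int_\TT\Lambda^\alpha u\,\Lambda^\alpha\pax\!\left(\tfrac12 u^2\right)dx.
\]
Because $\gamma<1+\delta$, interpolation against $\|u\|_{L^2}$ followed by Young's inequality bounds the destabilizing term by $\tfrac{\epsilon}{4}\|\Lambda^{\alpha+\frac{1+\delta}{2}}u\|_{L^2}^2+C\|u\|_{L^2}^2$. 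For the nonlinear term I would move $\tfrac{1+\delta}{2}$ derivatives onto the second factor and apply the Moser inequality,
\begin{align*}
\left|\int_\TT\Lambda^\alpha u\,\Lambda^\alpha\pax\!\left(\tfrac12 u^2\right)dx\right|
&\le C\|\Lambda^{\alpha+\frac{1-\delta}{2}}(u^2)\|_{L^2}\,\|\Lambda^{\alpha+\frac{1+\delta}{2}}u\|_{L^2}\\
&\le C\|u\|_{L^\infty}\|u\|_{H^{\alpha+\frac{1-\delta}{2}}}\,\|\Lambda^{\alpha+\frac{1+\delta}{2}}u\|_{L^2},
\end{align*}
and then, since $\delta>0$, interpolate $\|u\|_{H^{\alpha+\frac{1-\delta}{2}}}\le\|u\|_{H^{\alpha+\frac{1+\delta}{2}}}^{\theta}\|u\|_{L^2}^{1-\theta}$ with $\theta<1$, so that Young's inequality absorbs a further $\tfrac{\epsilon}{4}\|u\|_{H^{\alpha+\frac{1+\delta}{2}}}^2$ and leaves $C\|u\|_{L^\infty}^{q}\|u\|_{L^2}^2$. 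On $\TT$ one has $\|u\|_{L^2}\le C\|u\|_{L^\infty}$, so Lemma~\ref{Linftytoro} converts the right-hand side into an explicit $L^1_{\mathrm{loc}}$ function of $t$ alone, and
\[
\frac{d}{dt}\|u\|_{H^\alpha}^2+\epsilon\|u\|_{H^{\alpha+\frac{1+\delta}{2}}}^2\le g(t).
\]
Integrating in $t$ bounds $\|u(t)\|_{H^\alpha}$ on every finite interval, which excludes blow-up and gives the global classical solution; a further time integration gives the smoothing estimate $u\in L^2([0,T],H^{\alpha+\frac{1+\delta}{2}})$.

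For the weak solutions with $(1-\delta)/2<\alpha<2+\delta$, I would mollify: $u_0^n=\mathcal J_{1/n}\ast u_0$ lies in $H^{2+\delta}\cap L^\infty$, satisfies $\|u_0^n\|_{L^\infty}\le\|u_0\|_{L^\infty}$, and converges to $u_0$ in $H^\alpha$. The global classical solutions $u^n$ then satisfy, uniformly in $n$, the bound of Lemma~\ref{Linftytoro} in $L^\infty([0,T],L^\infty)$ and the energy bounds in $L^\infty([0,T],H^\alpha)\cap L^2([0,T],H^{\alpha+\frac{1+\delta}{2}})$; reading $\pat u^n$ off the equation, $\Lambda^{1+\delta}u^n\in L^2([0,T],H^{\alpha-\frac{1+\delta}{2}})$ and $\pax((u^n)^2)\in L^\infty([0,T],H^{\alpha-1})$ by the Moser inequality, and the hypothesis $\alpha>(1-\delta)/2$ is precisely what makes $\alpha-1\ge-\tfrac{1+\delta}{2}$, so $\pat u^n$ is bounded in $L^2([0,T],H^{-\frac{1+\delta}{2}})$. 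By the Aubin--Lions--Simon lemma, $\{u^n\}$ is relatively compact in $C([0,T],H^s)$ for $s<\alpha$ and in $L^2([0,T],H^r)$ for $r<\alpha+\tfrac{1+\delta}{2}$; along a subsequence $u^n\to u$. Strong convergence in $C([0,T],L^2)$ together with the uniform $L^\infty$-bound gives $(u^n)^2\to u^2$ in $C([0,T],L^2)$, enough to pass to the limit in the weak formulation of Definition~\ref{defi2b}; interpolating the $L^\infty$-bound with the $L^2$-convergence upgrades the continuity to $u\in C([0,T],L^p)$ for $2\le p<\infty$, weak lower semicontinuity preserves $u\in L^\infty([0,T],H^\alpha)\cap L^2([0,T],H^{\alpha+\frac{1+\delta}{2}})$, and $u(\cdot,0)=u_0$ follows from the convergence.

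For uniqueness, set $w=u_1-u_2$; then $\pat w+\pax\!\big(\tfrac12(u_1+u_2)w\big)=\Lambda^\gamma w-\epsilon\Lambda^{1+\delta}w$. Pairing with $w$ (legitimate by the Lions--Magenes lemma, since $w\in L^2([0,T],H^{\frac{1+\delta}{2}})$ and $\pat w\in L^2([0,T],H^{-\frac{1+\delta}{2}})$), integrating the nonlinear term by parts to $-\tfrac14\int_\TT\pax(u_1+u_2)\,w^2\,dx$, and absorbing the $\Lambda^\gamma$-term as above gives
\[
\frac{d}{dt}\|w\|_{L^2}^2\le\big(C+C\|\pax(u_1+u_2)(t)\|_{L^\infty}\big)\|w\|_{L^2}^2.
\]
For classical solutions $\alpha\ge2+\delta>3/2$ gives $\pax(u_1+u_2)\in C([0,T],L^\infty)$ and Gr\"onwall yields $w\equiv0$. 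For weak solutions the smoothing estimate puts $u_i$ in $L^2([0,T],H^{\alpha+\frac{1+\delta}{2}})$, which embeds in $L^2([0,T],W^{1,\infty})$ exactly when $3/2<\alpha+\tfrac{1+\delta}{2}$ --- hence the stated condition --- so the Gr\"onwall coefficient lies in $L^1([0,T])$ and, as $w(\cdot,0)=0$, again $w\equiv0$. I expect the main obstacle to be that the gain of derivatives is only $\tfrac{1+\delta}{2}\in(\tfrac12,1]$, so every nonlinear estimate must be closed by splitting derivatives with essentially no room to spare; a secondary point is that the $L^\infty$-bound of Lemma~\ref{Linftytoro} is available only for genuine smooth solutions of \eqref{KSivp}, which is what forces the detour through classical solutions and mollified data, and that the energy identities for $w$ in the weak case must be justified at low regularity rather than assumed.
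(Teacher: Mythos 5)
Your proposal is correct and follows essentially the same route as the paper: the \emph{a priori} $L^\infty$ bound of Lemma~\ref{Linftytoro}, the $H^\alpha$ energy estimate obtained by moving $(1+\delta)/2$ derivatives off the nonlinearity and closing via Moser, interpolation, and Young's inequality against the dissipation, Aubin--Lions compactness for the weak solutions with $\pat u$ bounded in $L^2([0,T],H^{-(1+\delta)/2})$, and the same $L^2$ uniqueness argument hinging on $u_i\in L^2([0,T],H^{\alpha+\frac{1+\delta}{2}})\hookrightarrow L^2([0,T],W^{1,\infty})$ when $\alpha+\frac{1+\delta}{2}>\frac32$. The only substantive difference is that you build the classical solutions by a Duhamel/contraction argument with a blow-up alternative, whereas the paper mollifies the equation and applies Picard's theorem to the regularized problem before passing to the limit; both are standard and interchangeable here.
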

\begin{proof}
\textbf{Step 1: $L^2$ estimate.} We multiply \eqref{KS} by $u$ and integrate by parts:
$$
\frac{1}{2}\frac{d}{dt}\|u\|_{L^2}^2=-\frac{\epsilon}{2}\|\Lambda^{\frac{1+\delta}{2}}u\|_{L^2}^2+\int_\TT u(x)\left(\Lambda^\gamma-\frac{\epsilon}{2}\Lambda^{1+\delta}\right)u \,dx.
$$
Using the Fourier transform, we get
$$
\int_\TT u(x)\left(\Lambda^\gamma- \frac{\epsilon}{2}\Lambda^{1+\delta}\right)u \,dx\leq \left(\frac{2\gamma}{\epsilon(1+\delta)}\right)^{1/(1+\delta-\gamma)}\|u(t)\|_{L^2(\TT)}^2.
$$
Inserting this into the previous bound we obtain
$$
\frac{d}{dt}\|u\|_{L^2}^2\leq -\epsilon\|\Lambda^{\frac{1+\delta}{2}}u\|_{L^2}^2+2 \left(\frac{2\gamma}{\epsilon(1+\delta)}\right)^{1/(1+\delta-\gamma)}\|u(t)\|_{L^2(\TT)}^2,
$$
and using Gronwall inequality,
\begin{multline}\label{l2}
\|u(t)\|_{L^2(\TT)}^2+\epsilon\int_0^t \exp\left(2\left(\frac{2\gamma}{\epsilon(1+\delta)}\right)^{1/(1+\delta-\gamma)}(t-s)\right)\|\Lambda^{\frac{1+\delta}{2}}u(s)\|_{L^2}^2\, ds\\
\leq \|u_0\|_{L^2(\RR)}^2\exp\left(2\left(\frac{2\gamma}{\epsilon(1+\delta)}\right)^{1/(1+\delta-\gamma)}t\right).
\end{multline}
In particular
\begin{multline*}
\|u(t)\|_{L^2(\TT)}+\epsilon\int_0^t \|\Lambda^{\frac{1+\delta}{2}}u(s)\|_{L^2}^2\, ds\\
\leq \|u_0\|_{L^2(\TT)}^2\exp\left(2\left(\frac{2\gamma}{\epsilon(1+\delta)}\right)^{1/(1+\delta-\gamma)}t\right).
\end{multline*}

\textbf{Step 2: $H^\alpha$ estimate.} We multiply \eqref{KS} by $\Lambda^{2\alpha}u$ and integrate, which gives
$$
\frac{d}{dt}\|\Lambda^\alpha u\|_{L^2(\TT)}^2=I_1+I_2+I_3,
$$
where
\begin{align*}
I_1&=\int_\TT \Lambda^{\alpha+\frac{1+\delta}{2}} u\Lambda^{\alpha+1-\frac{1-\delta}{2}}\mathcal{H}(u^2)\,dx,
\\
I_2&=2\int_\TT \Lambda^\alpha u\left(\Lambda^\gamma-\frac{\epsilon}{2}\Lambda^{1+\delta}\right)\Lambda^\alpha u\,dx
\leq C(\epsilon,\gamma,\delta) \|\Lambda^\alpha u\|_{L^2(\TT)}^2,
\\
I_3&=-\epsilon\|\Lambda^{\alpha+\frac{1+\delta}{2}}u\|_{L^2(\TT)}^2.
\end{align*}
The term $I_1$ can be handled as follows (see also \cite{kiselevburgers}): We use the Cauchy-Schwarz and Kato-Ponce inequalities (see Lemma \ref{commutator}) and the properties of the Hilbert transform (see \cite{stein1970singular}) to get
\begin{align*}
I_1&\leq \|\Lambda ^ {\alpha+\frac{1+\delta}{2}}u\|_{L^2(\TT)}\|\Lambda^{\alpha+1-\frac{1+\delta}{2}}(u^2)\|_{L^2(\TT)}\\
&\leq C\|\Lambda ^ {\alpha+\frac{1+\delta}{2}}u\|_{L^2(\TT)}\|\Lambda ^ {\alpha+1-\frac{1+\delta}{2}}u\|_{L^2(\TT)}\|u\|_{L^\infty(\TT)}.
\end{align*}
Then, using
$$
\alpha+1-\frac{1+\delta}{2}=t(\alpha+\frac{1+\delta}{2})+(1-t)\alpha,
$$
for $t=-1+2/(1+\delta)$, and H\"older's inequality on the Fourier side (with $p=1/t$ and $q=1/(1-t)$), we write
$$
\|\Lambda ^ {\alpha+1-\frac{1+\delta}{2}}u\|_{L^2(\TT)}^2\leq\|\Lambda ^ {\alpha+\frac{1+\delta}{2}}u\|_{L^2(\TT)}^{2t}\|\Lambda ^ {\alpha}u\|_{L^2(\TT)}^{2(1-t)}.
$$
Inserting this into the bound for $I_1$, we obtain
$$
I_1\leq C\|\Lambda ^ {\alpha+\frac{1+\delta}{2}}u\|_{L^2(\TT)}^{1+t}\|\Lambda ^ {\alpha}u\|_{L^2(\TT)}^{1-t}\|u\|_{L^\infty(\TT)}.
$$
Using H\"older's inequality again (with $p=2/(1+t)$ and $q=2/(1-t)$), we get
$$
I_1\leq C(\epsilon,\delta)\|\Lambda ^ {\alpha}u\|_{L^2(\TT)}^{2}\|u\|_{L^\infty(\TT)}^{2/(1-t)}+\frac{\epsilon}{2}\|\Lambda ^ {\alpha+\frac{1+\delta}{2}}u\|_{L^2(\TT)}^{2}.
$$
Using the estimate for $\|u(t)\|_{L^\infty(\TT)}$ and putting all the estimates together, we obtain
\begin{multline}\label{Halpha}
\frac{d}{dt}\|\Lambda^\alpha u\|_{L^2(\TT)}^2\leq \|\Lambda ^ {\alpha}u\|_{L^2(\TT)}^{2}\exp\left(C\left(\epsilon,\gamma,\delta,\|u_0\|_{L^\infty(\TT)}\right)(1+t)\right)\\
-\frac{\epsilon}{2}\|\Lambda ^ {\alpha+\frac{1+\delta}{2}}u\|_{L^2(\TT)}^{2}.
\end{multline}
Finally, from Gronwall inequality, we conclude that
\begin{multline*}
\|\Lambda^\alpha u(t)\|_{L^2(\TT)}^2+\frac{\epsilon}{2}\int_0^te^{e^{C\left(\epsilon,\gamma,\delta,\|u_0\|_{L^\infty(\TT)}\right)(1+t-s)}}\|\Lambda ^ {\alpha+\frac{1+\delta}{2}}u(s)\|_{L^2(\TT)}^{2}ds\\
\leq \|\Lambda^\alpha u_0\|_{L^2(\TT)}^2\exp\left(\exp\left(C\left(\epsilon,\gamma,\delta,\|u_0\|_{L^\infty(\TT)}\right)(1+t)\right)\right).
\end{multline*}
In particular,
$$
\int_0^t\|\Lambda ^ {\alpha+\frac{1+\delta}{2}}u\|_{L^2(\TT)}^{2}ds\leq \frac{2}{\epsilon}\|\Lambda^\alpha u_0\|_{L^2(\TT)}^2e^{e^{C\left(\epsilon,\gamma,\delta,\|u_0\|_{L^\infty(\TT)}\right)(1+t)}}.
$$

\textbf{Step 3: Strong solutions.} We denote by $\mathcal{J_\epsilon}$ a positive, symmetric mollifier. Then, in the case $\alpha>2+\delta$, we define the regularized problems
\begin{equation}\label{KSreg}
\pat \ueps+\jeps\frac{\pax(\left(\jeps\ueps\right)^2)}{2}=\jeps\left(\Lambda^\gamma -\epsilon\Lambda^{1+\delta}\right)\jeps\ueps,
\end{equation}
with initial data
$$
\ueps(0)= u_0.
$$
By Picard's Theorem, these regularized problems have a unique solution $\ueps\in C^1([0,T],H^\alpha(\TT))$. Moreover, since the \emph{a priori} estimates remain valid, these solutions are global in time. Thus, for every $T>0$ there exists
\[
u(x,t)\in L^\infty\left([0,T],H^\alpha(\TT)\right)
\]
such that (after picking a subsequence)
\[
\ueps\rightharpoonup u\qquad \mbox{in}\quad L^2\left([0,T],H^{\alpha+\frac{1+\delta}{2}}(\TT)\right).
\]

Next, we want to show that $\ueps\rightarrow u$ in $C\left([0,T],L^2(\TT)\right)$. The method is classical
(see e.g., \cite{bertozzi-Majda}) and we only sketch the proof. We subtract the regularized problems corresponding to labels $\vartheta$ and $\varpi$:
\begin{multline*}
\pat \ueps-\pat \upi+\jeps\frac{\pax(\left(\jeps\ueps\right)^2)}{2}-\jpi\frac{\pax(\left(\jpi\upi\right)^2)}{2}\\
=\jeps\left(\Lambda^\gamma -\epsilon\Lambda^{1+\delta}\right)\jeps\ueps-\jpi\left(\Lambda^\gamma -\epsilon\Lambda^{1+\delta}\right)\jpi\upi.
\end{multline*}
From this equation, we obtain
$$
\|\ueps-\upi\|_{C\left([0,T],L^2(\TT)\right)}\leq C(T,u_0,\gamma,\epsilon,\delta)\max\{\varpi-\vartheta\},
$$
and we get that
$$
\ueps\rightarrow u\text{ in } C\left([0,T],L^2(\TT)\right).
$$
Using interpolation and the parabolic character of the equation, we have
$$
\ueps\rightarrow u\text{ in } C\left([0,T],H^\alpha(\TT)\right),
$$
which shows that $u$ is a classical solution. Uniqueness follows by energy estimates.

\textbf{Step 4: Regularized problems and compactness.} We define the regularized problems
\begin{equation}\label{KSreg2}
\pat \ueps+\pax\left(\frac{1}{2}\ueps^2\right)=\left(\Lambda^\gamma -\epsilon\Lambda^{1+\delta}\right)\ueps,
\end{equation}
with initial data
$$
\ueps(0)= \jeps u_0.
$$
These problems have a global in time, smooth solution. Moreover, due to the energy estimates in the previous step, these solutions satisfy a uniform bound in the space
$$
\ueps\in L^p([0,T],H^\alpha(\RR)\cap L^\infty(\RR))
$$
for all $1\leq p\leq \infty$, and
$$
\ueps\in L^2([0,T],H^{\alpha+\frac{1+\delta}{2}}(\RR)).
$$
In particular, we get weak convergence in $L^2([0,T],H^{\alpha+\frac{1+\delta}{2}}(\RR))$ and weak-$*$ convergence in $L^\infty([0,T],L^\infty(\RR))$ of a subsequence to a function $u$. Moreover, by the weak lower semi-continuity of the norm, we have
$$
\|u\|_{L^2([0,T],H^{\alpha+\frac{1+\delta}{2}}(\RR))},
\ \|u\|_{L^\infty([0,T],L^\infty(\RR))}\leq C(\epsilon,\delta,\gamma,u_0).
$$

The dual space of $H^{(1+\delta)/2}(\TT)$ is $H^{-(1+\delta)/2}(\TT)$, and the corresponding norm of a function $f$ is given by
\begin{align*}
\|f\|_{H^{-(1+\delta)/2}(\TT)}&=\sup_{\|\psi\|_{H^{(1+\delta)/2}(\TT)}
\leq1} \left|\int_\TT f \psi dx\right|.
\end{align*}
We have
$$
H^{\alpha}(\TT)\hookrightarrow L^2(\TT)\hookrightarrow H^{-(1+\delta)/2}(\TT),
$$
where the first inclusion is compact and the second inclusion is continuous (see \cite{Valdinoci1}). To invoke the Aubin-Lions compactness Theorem (see Corollary 4, Section 8 in \cite{simon1986compact}) we need uniform bounds in the Bochner spaces
$$
\ueps\in L^\infty([0,T],H^{\alpha}(\TT)),
\qquad
\pat \ueps\in L^2([0,T],H^{-(1+\delta)/2}(\TT)).
$$
Multiplying \eqref{KSreg2} by $\psi\in H^{(1+\delta)/2}(\TT)$ and integrating by parts, we obtain
\begin{multline*}
\|\pat \ueps\|_{H^{-(1+\delta)/2}(\TT)}\leq \|\Lambda^{\frac{1-\delta}{2}}\ueps^2\|_{L^2(\TT)}+
\|\Lambda^{\frac{\gamma}{2}}\ueps\|_{L^2(\TT)}+\epsilon\|\Lambda^{\frac{1+\delta}{2}}\ueps\|_{L^2(\TT)}\\
\leq \|\Lambda^{\frac{1-\delta}{2}}\ueps\|_{L^2(\TT)}\|\ueps\|_{L^\infty(\TT)}+
\|\Lambda^{\frac{\gamma}{2}}\ueps\|_{L^2(\TT)}+\epsilon\|\Lambda^{\frac{1+\delta}{2}}\ueps\|_{L^2(\TT)}
\end{multline*}
Recalling that the energy estimates gives us uniform bounds
$$
\ueps\in L^2([0,T],H^{(1+\delta)/2}(\TT)) ,\text{ and }\ueps\in L^\infty([0,T],L^\infty(\TT)) ,
$$
and using Poincar\'e inequality, we get a uniform bound
$$
\pat \ueps\in L^2([0,T],H^{-(1+\delta)/2}(\TT)).
$$
Thus, we get

\begin{subequations}
\begin{align}
\ueps\rightharpoonup&  u\in L^2([0,T],H^{\alpha+\frac{1+\delta}{2}}(\TT)), \label{convergweak}\\
\pat \ueps\rightharpoonup& \pat u\in L^2([0,T],H^{-(1+\delta)/2}(\TT)).\label{convergweak2}
\end{align}
\end{subequations}
Applying the Aubin-Lions Lemma, we get that
\begin{subequations}
\begin{align}
\ueps\rightarrow& u\in C([0,T],L^2(\TT)),\label{convergLp}\\
\ueps\rightarrow& u\in C([0,T],L^p(\TT))\qquad  \mbox{for all $2\leq p<\infty$}.\label{convergLp2}
\end{align}
\end{subequations}
Then, using interpolation in Sobolev spaces, we get
\begin{equation}\label{converg}
\ueps\rightarrow u\in C([0,T],H^s(\TT)),\;0\leq s<\alpha.
\end{equation}

\textbf{Step 5: Convergence of the weak formulation.} We need to show that the limit $u$ of the regularized solutions in the previous step is a weak solution in the sense of Definition \ref{defi2b}. Let $\phi\in H^{(1+\delta)/2}(\TT)$ be a test function. Using the properties of mollifiers we obtain
$\ueps(0) \rightarrow u_0\text{ in }L^2$. To show convergence in the equation, we have to deal with the nonlinear term.

For $0<\delta<1/2$, we have $H^\delta\hookrightarrow L^{2/(1-2\delta)}$ and
\begin{align*}
&\int_{\TT}\Lambda^{(1+\delta)/2} \phi \Lambda^{(1-\delta)/2}\mathcal{H}\left(\ueps^2-u^2\right) \,dx\\
&\qquad\leq C(\phi)\|\Lambda^{(1-\delta)/2}\mathcal{H}\left(\ueps^2-u^2\right)\|_{L^2(\TT)}\\
&\qquad\leq C(\phi)\left(\|\Lambda^{(1-\delta)/2}(\ueps+u)\|_{L^{2/(1-2\delta)}(\TT)}\|\ueps-u\|_{L^{1/\delta}(\TT)}\right.\\ &\qquad\qquad\qquad\left.+\|\Lambda^{(1-\delta)/2}(\ueps-u)\|_{L^2(\TT)}\|\ueps+u\|_{L^\infty(\TT)}\right)\\
&\qquad\leq C(\phi)\left(\|\Lambda^{(1+\delta)/2}(\ueps+u)\|_{L^{2}(\TT)}\|\ueps-u\|_{L^{1/\delta}(\TT)}\right.\\ &\qquad\qquad\qquad\left.+\|\Lambda^{(1-\delta)/2}(\ueps-u)\|_{L^2(\TT)}\|\ueps+u\|_{L^\infty(\TT)}\right)\\
&\qquad\leq C(\phi,\epsilon,\delta,u_0,\gamma)\left(\|\ueps-u\|_{L^{1/\delta}(\TT)}\right.\\
&\qquad\qquad\qquad\left.+\|\Lambda^{(1-\delta)/2}(\ueps-u)\|_{L^2(\TT)}\right).
\end{align*}
For $\delta=1/2$, we use $H^{1/2}\hookrightarrow L^{4}$ to get
\begin{align*}
&\|\Lambda^{(1-\delta)/2}\mathcal{H}\left(\ueps^2-u^2\right)\|_{L^2(\TT)}\\
&\qquad\leq \left(\|\Lambda^{(1-\delta)/2}(\ueps+u)\|_{L^{4}(\TT)}\|\ueps-u\|_{L^{4}(\TT)}\right.\\ &\qquad\qquad\qquad\left.+\|\Lambda^{(1-\delta)/2}(\ueps-u)\|_{L^2(\TT)}\|\ueps+u\|_{L^\infty(\TT)}\right)\\
&\qquad\leq \left(\|\Lambda^{(1+\delta)/2}(\ueps+u)\|_{L^{2}(\TT)}\|\ueps-u\|_{L^{4}(\TT)}\right.\\
&\qquad\qquad\qquad \left.+\|\Lambda^{(1-\delta)/2}(\ueps-u)\|_{L^2(\TT)}\|\ueps+u\|_{L^\infty(\TT)}\right).
\end{align*}
For $1/2 < \delta \leq 1$, we have
$H^{\delta}\hookrightarrow L^{\infty}$
and
\begin{align*}
&\|\Lambda^{(1-\delta)/2}\mathcal{H}\left(\ueps^2-u^2\right)\|_{L^2(\TT)}\\
&\qquad\leq \left(\|\Lambda^{(1-\delta)/2}(\ueps+u)\|_{L^{\infty}(\TT)}\|\ueps-u\|_{L^{2}(\TT)}\right.\\ &\qquad\qquad\qquad\left.+\|\Lambda^{(1-\delta)/2}(\ueps-u)\|_{L^2(\TT)}\|\ueps+u\|_{L^\infty(\TT)}\right)\\
&\qquad\leq \left(\|\Lambda^{(1+\delta)/2}(\ueps+u)\|_{L^{2}(\TT)}\|\ueps-u\|_{L^{2}(\TT)}\right.\\ &\qquad\qquad\qquad\left.+\|\Lambda^{(1-\delta)/2}(\ueps-u)\|_{L^2(\TT)}\|\ueps+u\|_{L^\infty(\TT)}\right)
\end{align*}
Using \eqref{convergLp2} and \eqref{converg}, we obtain
\begin{equation}\label{convergnonlinear}
\sup_t\left|\int_{\TT}\Lambda^{(1+\delta)/2} \phi \Lambda^{(1-\delta)/2}\mathcal{H}\left(\ueps^2-u^2\right) dx\right|\rightarrow0.
\end{equation}
Next, we test against $\phi\in C^1([0,T],H^{(1+\delta)/2}(\TT))$ and integrate in time. Equation \eqref{convergweak} gives
$$
\int_0^T\int_\TT\Lambda^{s}\phi(t)\Lambda^{s}(\ueps(t)-u(t))\, dx dt\rightarrow 0\qquad 0\leq s\leq \alpha+\frac{1+\delta}{2},
$$
which ensures the convergence of the linear terms with $s=\gamma/2,(1+\delta)/2$, while \eqref{convergnonlinear} ensures the convergence of the nonlinear terms. Since
\[
C^1([0,T],H^{(1+\delta)/2}(\TT))
\]
is dense in
\[
L^2([0,T],H^{(1+\delta)/2}(\TT)),
\]
it follows that $u$ satisfies the weak formulation for every
\[
\phi\in L^2([0,T],H^{(1+\delta)/2}(\TT)).
\]
Taking $\phi$ independent of $t$, we find that the weak formulation holds almost everywhere in time, which completes the proof of the existence of weak solutions.

\textbf{Step 6: Uniqueness of weak solutions.} Suppose that $u_1$, $u_2$
are weak solutions of \eqref{KSivp} with the same initial data and let $w=u_1-u_2$. Testing against $w$, we have
\begin{eqnarray*}
\frac{1}{2}\frac{d}{dt}\|w\|_{L^2}^2&=&- \int_\RR w^2\pax u_1+w u_2\pax w dx\\
&&+\int_\RR w \Lambda^\gamma w-\epsilon\int_\RR w \Lambda^{1+\delta} w dx\\
&\leq& C\|w\|_{L^2}^2(\|u_1\|_{H^{1.5+\varepsilon}}+\|u_2\|_{H^{1.5+\varepsilon}}+1),
\end{eqnarray*}
and Gronwall's inequality implies that $w=0$.
\end{proof}

The proof of global existence for $\Omega=\RR$ is similar to the one for $\Omega = \TT$, but we need to modify
the proof of the $L^\infty$-estimate to account for the difference in the kernel of the fractional derivatives.

\begin{lem}\label{Linftyrecta}
If $u(x,t)$ is a smooth solution of \eqref{KSivp} on $\Omega = \RR$, then
$$
\|u(t)\|_{L^\infty(\RR)}\leq \|u_0\|_{L^\infty(\RR)}\exp\left(C(\epsilon,\gamma,\delta)t\right).
$$
\end{lem}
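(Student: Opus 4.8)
The plan is to adapt the pointwise argument of Lemma~\ref{Linftytoro} to $\Omega=\RR$, where the fractional derivatives have the single-power kernel (with no periodization),
\[
\Lambda^\alpha u(x)=c_\alpha\,\pv \frac{u(x)-u(x-\eta)}{|\eta|^{1+\alpha}}\,d\eta,\qquad
c_\alpha=\frac{\Gamma(1+\alpha)\cos\left((1-\alpha)\frac{\pi}{2}\right)}{\pi}>0 ,
\]
valid for $0<\alpha<2$; since $0<\delta<1$ and $0\le\gamma<1+\delta<2$, this covers $\Lambda^{1+\delta}$ and, when $\gamma>0$, $\Lambda^{\gamma}$. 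A smooth solution lies in $H^\alpha$ and is therefore continuous and vanishes at infinity, so for each $t$ the norm $\|u(t)\|_{L^\infty(\RR)}$ is attained, equalling either $\max_x u(x,t)$ or $-\min_x u(x,t)$; it is a Lipschitz function of $t$, hence (Rademacher's theorem, \cite{evans}) differentiable for almost every $t$, with derivative equal to $\pat u(x_t,t)$ at a point $x_t$ realizing the extremum. This is the standard reduction of \cite{AGM, cor2, c-g09, CGO}.

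Consider first a time at which the maximum is selected, $\|u(t)\|_{L^\infty(\RR)}=u(x_t)$. Then $\pax u(x_t)=0$, so the Burgers term $\pax\left(\frac{1}{2}u^2\right)(x_t)=u(x_t)\pax u(x_t)$ vanishes, and using \eqref{KSivp},
\begin{align*}
\frac{d}{dt}\|u(t)\|_{L^\infty(\RR)}&=\Lambda^\gamma u(x_t)-\epsilon\,\Lambda^{1+\delta}u(x_t)\\
&=\pv \left(u(x_t)-u(x_t-\eta)\right)\left(\frac{c_\gamma}{|\eta|^{1+\gamma}}-\frac{\epsilon\,c_{1+\delta}}{|\eta|^{2+\delta}}\right)d\eta .
\end{align*}
(If $\gamma=0$ then $\Lambda^\gamma u(x_t)=u(x_t)=\|u(t)\|_{L^\infty(\RR)}$ directly and there is nothing more to do, so assume $0<\gamma<1+\delta$.) Since $1+\gamma<2+\delta$, there is $\omega=\omega(\epsilon,\gamma,\delta)=\left(\epsilon\,c_{1+\delta}/c_\gamma\right)^{1/(1+\delta-\gamma)}$ with
\[
\frac{c_\gamma}{|\eta|^{1+\gamma}}-\frac{\epsilon\,c_{1+\delta}}{|\eta|^{2+\delta}}\le0 \qquad\text{for }0<|\eta|\le\omega .
\]
Because $x_t$ is a maximum, $u(x_t)-u(x_t-\eta)\ge0$ for all $\eta$, so the contribution of $\{|\eta|\le\omega\}$ is $\le0$ and may be dropped. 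On $\{|\eta|>\omega\}$ there is no singularity, and the kernel is integrable at infinity since $1+\gamma>1$ and $2+\delta>1$, hence
\begin{align*}
&\left|\int_{|\eta|>\omega}\left(u(x_t)-u(x_t-\eta)\right)\left(\frac{c_\gamma}{|\eta|^{1+\gamma}}-\frac{\epsilon\,c_{1+\delta}}{|\eta|^{2+\delta}}\right)d\eta\right|\\
&\qquad\le 2\|u(t)\|_{L^\infty(\RR)}\int_{|\eta|>\omega}\left(\frac{c_\gamma}{|\eta|^{1+\gamma}}+\frac{\epsilon\,c_{1+\delta}}{|\eta|^{2+\delta}}\right)d\eta\\
&\qquad= C(\epsilon,\gamma,\delta)\,\|u(t)\|_{L^\infty(\RR)} .
\end{align*}
The case $\|u(t)\|_{L^\infty(\RR)}=-\min_x u(x,t)$ is handled identically with $u$ replaced by $-u$. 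Therefore $\frac{d}{dt}\|u(t)\|_{L^\infty(\RR)}\le C(\epsilon,\gamma,\delta)\|u(t)\|_{L^\infty(\RR)}$ for a.e.\ $t$, and Gronwall's inequality yields the stated estimate.

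The main obstacle is the one already present in Lemma~\ref{Linftytoro}: at an extremum the lower-order term $\Lambda^\gamma u(x_t)$ carries the unfavorable (nonnegative) sign, and the only available structure is that its singularity at $\eta=0$ is weaker than that of $\Lambda^{1+\delta}u(x_t)$ --- which is exactly what the cutoff at $\omega$ uses, and where the hypothesis $\gamma<1+\delta$ is needed. In other respects the computation on $\RR$ is in fact simpler than on $\TT$: the kernel of $\Lambda^\alpha$ on the line is a single negative power rather than the periodized series in \eqref{periodickernel}, so the non-singular ``tail'' terms ($|k|\ge1$) that appear in the periodic proof do not arise.
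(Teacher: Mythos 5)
Your proof is correct and follows essentially the same route as the paper's: the single-power kernel representation of $\Lambda^\alpha$ on $\RR$, evaluation at the extremal point via Rademacher's theorem, a sign decomposition at the radius $\omega$ where the combined kernel $c_\gamma|\eta|^{-(1+\gamma)}-\epsilon c_{1+\delta}|\eta|^{-(2+\delta)}$ changes sign, and an integrable-tail bound by $C(\epsilon,\gamma,\delta)\|u(t)\|_{L^\infty}$. Your explicit treatment of $\gamma=0$ and the formula for $\omega$ are harmless refinements of the same argument.
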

\begin{proof}
The fractional derivative on $\RR$ can be written as
$$
\Lambda^\alpha u(x)=\frac{c(\alpha)}{\pi}\pv \frac{u(x)-u(x-\eta)}{|\eta|^{1+\alpha}}d\eta
$$
where
$$
c(\alpha)=\Gamma(1+\alpha)\cos\left((1-\alpha)\frac{\pi}{2}\right).
$$
Let $x_t$ denote the point where $u$ reaches its maximum (this point is contained in a compact set in the real line since $u\in H^\alpha$ where $\alpha$ is certainly greater than $1/2$)
and assume that $\|u(t)\|_{L^\infty(\RR)}=u(x_t)$. Then, using Rademacher's Theorem
as before, we get
\begin{align*}
&\frac{d}{dt}\|u(t)\|_{L^\infty(\RR)}=\pat u(x_t)\\
&\qquad=\frac{1}{\pi}\pv \frac{(u(x_t)-u(x_t-\eta))\left(c(\gamma)|\eta|^{1+\delta-\gamma}-c(1+\delta)\epsilon\right)}{|\eta|^{2+\delta}}d\eta\\
&\qquad\leq \frac{1}{\pi}\int_{|\eta|>C(\epsilon,\gamma,\delta)} \frac{(u(x_t)-u(x_t-\eta))}{|\eta|^{1+\gamma}}d\eta\\
&\qquad\leq C(\epsilon,\gamma,\delta)\|u(t)\|_{L^\infty(\RR)}.
\end{align*}
Similarly, if $\|u(t)\|_{L^\infty(\RR)}=-u(x_t)$ where $x_t$ for the point where $u$ attains its minimum, we have
\begin{align*}
&\frac{d}{dt}\|u(t)\|_{L^\infty(\RR)}=-\pat u(x_t)\\
&\qquad=-\frac{1}{\pi}\pv \frac{(\|u(t)\|_{L^\infty(\RR)}+u(x_t-\eta))\left(c(1+\delta)\epsilon-c(\gamma)|\eta|^{1+\delta-\gamma}\right)}{|\eta|^{2+\delta}}\\
&\qquad\leq C(\epsilon,\gamma,\delta)\|u(t)\|_{L^\infty(\RR)},
\end{align*}
and it follows that
$$
\|u(t)\|_{L^\infty(\RR)}\leq \|u_0\|_{L^\infty(\RR)}\exp\left(C(\epsilon,\gamma,\delta) t\right).
$$
\end{proof}

Using Lemma~\ref{Linftyrecta} and the same ideas as in Theorem~\ref{teotoro}, we then get the following result.

\begin{teo}\label{teorecta}
Let $0 < \delta < 1$, $0 \le \gamma<1+\delta$, and $\epsilon > 0$.
If
\[
u_0\in H^\alpha(\RR)\cap L^\infty(\RR)
\]
with $\alpha\geq2+\delta$, then for every $0< T <\infty$ there exists a unique classical solution of \eqref{KSivp} such that
$$
u(x,t)\in C([0,T],H^\alpha(\RR)).
$$
Moreover, the solution gains regularity and satisfies
$$
u(x,t)\in L^2([0,T],H^{\alpha+\frac{1+\delta}{2}}(\RR)).
$$
\end{teo}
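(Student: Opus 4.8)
The plan is to mirror the proof of Theorem~\ref{teotoro}. Since $\alpha\geq2+\delta$ by hypothesis, only the classical-solution branch is needed, so the argument is in fact shorter than the one on $\TT$; the sole new ingredient is the $L^\infty$-estimate of Lemma~\ref{Linftyrecta} in place of Lemma~\ref{Linftytoro}, and the Aubin--Lions compactness step used on $\TT$ must be replaced by a direct Cauchy-sequence argument appropriate to the real line.

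First I would establish the analogues of Steps~1 and~2 of the proof of Theorem~\ref{teotoro}, now with all integrals over $\RR$. Multiplying \eqref{KS} by $u$ and bounding the linear contribution on the Fourier side via $|\xi|^\gamma\leq\tfrac{\epsilon}{2}|\xi|^{1+\delta}+C(\epsilon,\gamma,\delta)$, Gronwall's inequality gives $u\in L^\infty([0,T],L^2(\RR))\cap L^2([0,T],H^{(1+\delta)/2}(\RR))$. Multiplying \eqref{KS} by $\Lambda^{2\alpha}u$ and integrating, the only delicate term is
$$
I_1=\int_\RR \Lambda^{\alpha+\frac{1+\delta}{2}}u\,\Lambda^{\alpha+1-\frac{1+\delta}{2}}\mathcal{H}(u^2)\,dx,
$$
which, by Cauchy--Schwarz, the $L^2$-boundedness of $\mathcal{H}$, the Kato--Ponce inequality (Lemma~\ref{commutator}), and the same Fourier-side interpolation as on $\TT$ (with exponent $t=-1+2/(1+\delta)$), is controlled by $C(\epsilon,\delta)\|\Lambda^\alpha u\|_{L^2(\RR)}^2\|u\|_{L^\infty(\RR)}^{2/(1-t)}+\tfrac{\epsilon}{2}\|\Lambda^{\alpha+(1+\delta)/2}u\|_{L^2(\RR)}^2$. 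Feeding in the bound $\|u(t)\|_{L^\infty(\RR)}\leq\|u_0\|_{L^\infty(\RR)}\exp(Ct)$ from Lemma~\ref{Linftyrecta} and applying Gronwall's inequality gives $u\in L^\infty([0,T],H^\alpha(\RR))$ together with the claimed gain $u\in L^2([0,T],H^{\alpha+(1+\delta)/2}(\RR))$; note that the maximum point $x_t$ used in Lemma~\ref{Linftyrecta} lies in a compact set precisely because $\alpha>1/2$.

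Next I would construct the solution by regularization as in Step~3 of the proof of Theorem~\ref{teotoro}: solving the mollified problem \eqref{KSreg} with a positive symmetric mollifier, Picard's theorem yields solutions $\ueps\in C^1([0,T],H^\alpha(\RR))$ which are global and bounded uniformly in $\vartheta$ by the \emph{a priori} estimates above. Since $H^\alpha(\RR)\hookrightarrow L^2(\RR)$ is \emph{not} compact, Aubin--Lions is unavailable; instead I would subtract the equations for two labels $\vartheta$ and $\varpi$, test the difference against $\ueps-\upi$, and estimate directly. The nonlocal linear part contributes a term of favourable sign up to a bounded perturbation, the quadratic nonlinearity is controlled by the uniform $L^\infty$- and $H^\alpha$-bounds, and the terms carrying the difference of the two mollifiers are $O(\max\{\varpi-\vartheta\})$, so one obtains a Cauchy estimate in $C([0,T],L^2(\RR))$ and hence $\ueps\to u$ in $C([0,T],L^2(\RR))$. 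Interpolating against the uniform $L^\infty([0,T],H^\alpha(\RR))$-bound upgrades this to $C([0,T],H^s(\RR))$ for every $s<\alpha$; since $\alpha\geq2+\delta$ one may then pass to the limit in \eqref{KSivp}, and the parabolic character of the equation promotes the convergence to $C([0,T],H^\alpha(\RR))$, so that $u$ is a classical solution with the stated regularity. Uniqueness follows as in Step~6 of the proof of Theorem~\ref{teotoro}: for $w=u_1-u_2$ one gets $\tfrac12\tfrac{d}{dt}\|w\|_{L^2(\RR)}^2\leq C(1+\|u_1\|_{H^{3/2+\varepsilon}}+\|u_2\|_{H^{3/2+\varepsilon}})\|w\|_{L^2(\RR)}^2$, using $\int_\RR w\,\Lambda^{1+\delta}w\,dx\geq0$ and $\alpha\geq2+\delta>3/2$, so Gronwall forces $w\equiv0$.

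The hard part, and essentially the only substantive difference from the torus case, is the loss of compactness of the Sobolev embeddings on $\RR$: the strong convergence of the regularized solutions, which on $\TT$ came for free from Aubin--Lions, must now be extracted from the direct Cauchy-sequence estimate above. One also has to check that the singular-integral representations and energy identities underlying Lemma~\ref{Linftyrecta} and the $H^\alpha$-estimate remain valid with integrals over $\RR$ in place of the lattice sums of \eqref{periodickernel}; everything else is a routine transcription of the argument on $\TT$.
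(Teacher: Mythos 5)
Your proposal is correct and follows essentially the same route as the paper, whose proof of Theorem~\ref{teorecta} consists precisely of invoking Lemma~\ref{Linftyrecta} together with the argument of Theorem~\ref{teotoro}. Your worry about replacing Aubin--Lions is in fact moot: since $\alpha\geq 2+\delta$ only the classical-solution branch is needed, and already on $\TT$ that branch (Step~3 of the proof of Theorem~\ref{teotoro}) uses the direct Cauchy-sequence estimate $\|\ueps-\upi\|_{C([0,T],L^2)}\leq C\max\{\varpi-\vartheta\}$ rather than compactness, so your argument transcribes verbatim to $\RR$.
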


\subsection{The case $\delta=1$}
\label{sec:exist1}
In this case, equation \eqref{KS} becomes
\begin{equation}\label{KSdelta1}
\pat u+\pax\left(\frac{1}{2}u^2\right) =\Lambda^\gamma u +\epsilon\pax^2 u,\text{ }x\in\Omega, t>0,
\end{equation}
The previous proofs do not apply directly since they use a kernel representation
of $\Lambda^{1+\delta}$ which is not valid if $\delta=1$. Nevertheless, we have an analogous existence result.

\begin{teo}\label{teodelta1}
Let $u_0\in H^\alpha(\Omega)$ with $\alpha\geq1$ be the initial data for equation \eqref{KSdelta1}, where $\epsilon>0$, $0\le \gamma < 2$,
and $\Omega$ is $\TT$ or $\RR$. Then the following statements hold.
\begin{itemize}
\item If $\alpha\geq3$, then for every $0<T<\infty$ there exists a unique classical solution $$u(x,t)\in C([0,T],H^\alpha(\Omega)).$$

\item If $1\leq\alpha< 3$, then for every $0<T<\infty$ there exists a weak solution $$u(x,t)\in L^\infty([0,T],H^\alpha(\Omega))\cap C([0,T],L^2(\Omega)).$$
\item Moreover, the solution gains regularity and satisfies
$$
u(x,t)\in L^2([0,T],H^{\alpha+1}(\Omega)).
$$
\end{itemize}

\end{teo}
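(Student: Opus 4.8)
The plan is to mirror the structure of the proof of Theorem~\ref{teotoro}, with the simplifications afforded by $\delta=1$: the stabilizing term is now the genuine Laplacian $\epsilon\pax^2 u$, so standard parabolic energy methods apply and no singular-kernel manipulation is needed. The only genuinely nonlocal ingredient is $\Lambda^\gamma u$ with $0\le\gamma<2$, and since $\gamma/2<1$ this term is controlled by interpolation between $L^2$ and $H^1$ with a small constant times $\|\Lambda u\|_{L^2}^2$ absorbed by the dissipation. First I would establish the \emph{a priori} estimates: multiplying \eqref{KSdelta1} by $u$, the nonlinear term $\int \pax(u^2/2)\,u\,dx$ vanishes on $\TT$ (and on $\RR$ for decaying solutions), $\int u\,\Lambda^\gamma u\,dx\le \tfrac{\epsilon}{2}\|\Lambda u\|_{L^2}^2+C(\epsilon,\gamma)\|u\|_{L^2}^2$ by Young's inequality on the Fourier side, and $-\epsilon\int u\,\pax^2 u\,dx=\epsilon\|\pax u\|_{L^2}^2$, giving
\begin{equation*}
\frac{d}{dt}\|u\|_{L^2}^2+\epsilon\|\pax u\|_{L^2}^2\le C(\epsilon,\gamma)\|u\|_{L^2}^2,
\end{equation*}
hence a Gronwall bound on $\|u(t)\|_{L^2}^2$ and on $\int_0^t\|\pax u(s)\|_{L^2}^2\,ds$. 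Note that no separate $L^\infty$-estimate (Lemma~\ref{Linftytoro} or Lemma~\ref{Linftyrecta}) is invoked here, since for $\alpha\ge 1$ we have $H^\alpha\hookrightarrow L^\infty$ and the $H^\alpha$-bound below will supply $\|u\|_{L^\infty}$ directly.

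Next, the $H^\alpha$-estimate: apply $\Lambda^\alpha$ to the equation and pair with $\Lambda^\alpha u$. The linear terms give $2\int\Lambda^\alpha u(\Lambda^\gamma-\tfrac{\epsilon}{2}(-\pax^2))\Lambda^\alpha u\,dx\le C(\epsilon,\gamma)\|\Lambda^\alpha u\|_{L^2}^2$, and $I_3=-\epsilon\|\Lambda^{\alpha+1}u\|_{L^2}^2$. The nonlinear term $I_1=-\int \Lambda^\alpha\pax(u^2/2)\,\Lambda^\alpha u\,dx$ is handled exactly as in Step~2 of Theorem~\ref{teotoro}, using Cauchy--Schwarz, the Kato--Ponce/Moser inequality $\|\Lambda^{\alpha}(u^2)\|_{L^2}\le C\|u\|_{L^\infty}\|\Lambda^\alpha u\|_{L^2}$ (cf.\ Lemma~\ref{commutator}), the interpolation $\|\Lambda^{\alpha+1/2}u\|_{L^2}\le\|\Lambda^{\alpha+1}u\|_{L^2}^{1/2}\|\Lambda^\alpha u\|_{L^2}^{1/2}$ on the Fourier side, and two applications of Young's inequality to absorb $\tfrac{\epsilon}{2}\|\Lambda^{\alpha+1}u\|_{L^2}^2$. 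This yields
\begin{equation*}
\frac{d}{dt}\|\Lambda^\alpha u\|_{L^2}^2+\frac{\epsilon}{2}\|\Lambda^{\alpha+1}u\|_{L^2}^2\le C(\epsilon,\gamma)\|\Lambda^\alpha u\|_{L^2}^2\bigl(1+\|u\|_{L^\infty}^{4}\bigr),
\end{equation*}
and since $\|u\|_{L^\infty}\le C\|u\|_{H^\alpha}$ for $\alpha\ge1>1/2$, a continuation argument combined with the differential inequality gives a global-in-time $L^\infty([0,T],H^\alpha)$ bound and, after integrating in time, the gain-of-regularity bound $u\in L^2([0,T],H^{\alpha+1})$.

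With the uniform estimates in hand, the existence proof follows the now-standard scheme of Theorem~\ref{teotoro}: for $\alpha\ge3$ mollify as in \eqref{KSreg}, invoke Picard to get local solutions in $C^1([0,T],H^\alpha)$, globalize using the \emph{a priori} bounds, and pass to the limit — the difference of two regularized solutions satisfies an $L^2$-contraction estimate (the nonlinear commutator terms are controlled by $\|u_i\|_{H^{1.5+\varepsilon}}$, which is finite since $\alpha\ge3$), giving strong convergence in $C([0,T],L^2)$, then $C([0,T],H^\alpha)$ by interpolation with the high-norm bound, hence a classical solution; uniqueness is the same $L^2$ energy estimate. For $1\le\alpha<3$ one instead uses the regularization \eqref{KSreg2} with mollified initial data, extracts weak limits in $L^2([0,T],H^{\alpha+1})$ and weak-$*$ limits in $L^\infty([0,T],H^\alpha)$, controls $\pat\ueps$ in $L^2([0,T],H^{-1})$ from the equation (here using $\|\pax(u^2)\|_{H^{-1}}\le\|u^2\|_{L^2}\le\|u\|_{L^\infty}\|u\|_{L^2}$, the bound on $\Lambda^\gamma u$ with $\gamma<2$, and $\|\pax^2 u\|_{H^{-1}}\le\|\pax u\|_{L^2}$), and applies Aubin--Lions with $H^\alpha\hookrightarrow\hookrightarrow L^2\hookrightarrow H^{-1}$ to get strong convergence in $C([0,T],L^2)$, which passes to the limit in the weak formulation. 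The main obstacle — such as it is — is the same as in Theorem~\ref{teotoro}: verifying that the nonlinear term passes to the limit in the weak formulation for the lower range of $\alpha$; but here, because the dissipation is order two and $\alpha\ge1$, the embedding $H^{\alpha+1/2}\hookrightarrow L^\infty$ combined with the strong $C([0,T],L^2)$ convergence makes $\|u^2_\vartheta-u^2\|_{L^2}\to0$ immediate, so this step is strictly easier than the delicate $0<\delta<1$ case and requires no splitting into subranges of $\gamma$.
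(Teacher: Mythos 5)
Your overall scheme (a priori estimates, mollification, Aubin--Lions, passage to the limit in the weak formulation) is the same as the paper's, and those parts are sound. However, there is a genuine gap in how you close the $H^\alpha$ estimate \emph{globally} in time. You explicitly discard any independent control of $\|u\|_{L^\infty}$ and propose to supply it from the $H^\alpha$ bound itself via $\|u\|_{L^\infty}\le C\|u\|_{H^\alpha}$. Substituting that into your differential inequality turns it into something of the form $\frac{d}{dt}y\le C\,y\,(1+y^{p})$ with $y=\|\Lambda^\alpha u\|_{L^2}^2$ and $p\ge 1$, which is superlinear and only yields a bound up to a possibly finite blow-up time; a ``continuation argument'' cannot upgrade this to a bound on all of $[0,T]$. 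This is precisely the step the paper has to work for: in the $0<\delta<1$ case it proves a separate exponential $L^\infty$ bound by a maximum-principle/kernel argument (Lemmas \ref{Linftytoro} and \ref{Linftyrecta}), and in the $\delta=1$ case --- where the kernel representation of $\Lambda^{1+\delta}$ is unavailable --- it extracts the missing information from the $L^2$ estimate instead: the dissipation gives $\int_0^T\|\pax u(s)\|_{L^2(\Omega)}^2\,ds\le C(T,u_0,\gamma,\epsilon)$, and in one dimension $\|u\|_{L^\infty}^2\le c\|\pax u\|_{L^2}^2$ (mean-zero periodic case; Gagliardo--Nirenberg together with the $L^2$ bound on $\RR$), so $t\mapsto\|u(t)\|_{L^\infty}^2$ is \emph{integrable} on $[0,T]$. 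Feeding this into the $H^1$ inequality
\begin{equation*}
\frac{d}{dt}\|\pax u(t)\|_{L^2(\Omega)}^2\le C(\epsilon,\gamma)\bigl(1+\|u(t)\|_{L^\infty(\Omega)}^2\bigr)\|\pax u(t)\|_{L^2(\Omega)}^2
\end{equation*}
closes the $H^1$ estimate by Gronwall with an integrable coefficient, whence $\sup_{t\in[0,T]}\|u(t)\|_{L^\infty}$ is bounded via $H^1\hookrightarrow L^\infty$, and only then do the higher $H^\alpha$ estimates close linearly as in Theorem \ref{teotoro}. You need to insert this intermediate $L^2\to H^1\to L^\infty$ step; without it your energy inequality does not give global existence.

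A smaller point: with $\delta=1$ the interpolation parameter in the treatment of the nonlinear term is $t=-1+2/(1+\delta)=0$, so the exponent of $\|u\|_{L^\infty}$ in the $H^\alpha$ inequality should be $2/(1-t)=2$ rather than $4$; this does not affect the substance of the gap above.
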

\begin{proof}
We give only the \emph{a priori} estimates. The proof then follows from the one for $0\le \delta < 1$ with minor changes.

The $L^2$ energy estimate is
$$
\frac{1}{2}\frac{d}{dt}\|u(t)\|_{L^2(\Omega)}^2+\frac{\epsilon}{2}\|\pax u\|_{L^2(\Omega)}^2=\|\Lambda^{\gamma/2} u\|_{L^2(\Omega)}^2-\frac{\epsilon}{2}\|\pax u\|_{L^2(\Omega)}^2.
$$
Using Fourier estimates and Gronwall's inequality, we obtain
$$
\|u(t)\|_{L^2(\Omega)}+\epsilon\int_0^t \|\pax u(s)\|_{L^2(\Omega)}^2ds\leq \|u_0\|_{L^2(\Omega)}^2\exp\left(c(\epsilon,\gamma)t\right).
$$
In particular
$$
\int_0^T \|u(s)\|_{L^\infty(\Omega)}^2ds\leq c\int_0^T \|\pax u(s)\|_{L^2(\Omega)}^2ds\leq C(T,u_0,\gamma,\epsilon).
$$
The $H^1$ energy estimate is
\begin{multline*}
\frac{1}{2}\frac{d}{dt}\|\pax u(t)\|_{L^2(\Omega)}^2\leq \frac{c}{\epsilon}\|u(t)\|_{L^\infty(\Omega)}^2\|\pax u(t)\|_{L^2(\Omega)}^2+\|\Lambda^{\gamma/2} u\|_{L^2(\Omega)}^2\\
-\frac{\epsilon}{2}\|\pax u\|_{L^2(\Omega)}^2\leq C(\epsilon,\gamma)(\|u(t)\|_{L^\infty(\Omega)}^2+1)\|\pax u(t)\|_{L^2(\Omega)}^2.
\end{multline*}
Also, using Sobolev and Gronwall inequalities we obtain
$$
\sup_{t\in[0,T]}\|u(t)\|_{L^\infty(\Omega)}^2\leq c\|\pax u(t)\|_{L^2(\Omega)}^2\leq C(\epsilon,\gamma,u_0,T).
$$
With these global estimate in $H^1$ and $L^\infty$, we can mimic the previous proof that used $H^\alpha$ norms.
\end{proof}

\section{Instant analyticity}
\label{sec:analyticity}
In this section, we prove that solutions of \eqref{KS} immediately gain some analyticity.
As in \cite{ccfgl} (see also \cite{AGM, CGO, AnalyticityKuramotoGrujic}), our proof is based on \emph{a priori} estimates in Hardy-Sobolev spaces for the complex extension of the function $u$ in a (growing) complex strip
$$
\BB_k(t) =\{x+i\xi: \mbox{$x\in\Omega$, $|\xi|<kt$}\},
$$
where $k$ is a positive constant. 
We also consider a (shrinking) complex strip
$$
\VV_h(t) =\{x+i\xi:  \mbox{$x\in\Omega$, $|\xi|<h(t)$}\},
$$
where $h(t)$ is a positive, decreasing function.
When convenient, we do  not display the $t$-dependence of these strips explicitly.

We define the norms
\begin{align*}
\|u\|^2_{L^2(\BB_k)}&=\sum_{\pm}\int_\Omega |u(x\pm ikt)|^2dx,
\\
\|u\|_{H^n(\BB_k)}^2&=\|u\|_{L^2(\BB_k)}^2+\|\pax^n u\|^2_{L^2(\BB_k)},
\end{align*}
with their analogous counterparts for the strip $\VV_h$.
The corresponding function spaces have the same flavour as the Gevrey classes used in \cite{FerrariTiti, FTkuramoto2}. In particular, the tools in \cite{FerrariTiti} may be adapted to get $u(x,t)\in G^1_t(\Omega)$, which implies the analyticity for real spatial arguments $x$.

\begin{teo}\label{SE}Let $u$ be a classical solution of \eqref{KSivp} with (real-valued) initial data $u_0$, where $\epsilon >0$ and $\gamma$, $\delta$ satisfy \eqref{gammadelta}. Then the following statements hold.
\begin{itemize}
\item If $u_0\in H^3(\Omega)$ and $k > 0$, then there exists a time $T(k, u_0,\epsilon,\delta,\gamma) > 0$ such that $u$ continues analytically into the strip $\BB_k(t)$ for $0<t<T(k, u_0,\epsilon,\delta,\gamma)$.
\item If $u_0\in H^3(\Omega)$ continues to an analytic function in a complex strip of width $h_0 > 0$, then there exists a time $T(u_0,\epsilon,\delta,\gamma)$ and a positive decreasing function $h : [0,T) \to (0,\infty)$ such that $h(0) = h_0$ and  $u$ continues analytically into the strip $\VV_h(t)$ for $0<t<T(u_0,\epsilon,\delta,\gamma)$ with finite $H^3(\VV_h)$-norm. 
\end{itemize}
\end{teo}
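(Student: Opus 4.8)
The plan is to adapt the complex-strip energy method of \cite{ccfgl} (see also \cite{AGM, CGO, AnalyticityKuramotoGrujic}); I describe it for $\Omega=\TT$, the case $\Omega=\RR$ being the same up to decay at infinity. Write $\mu(t)=kt$ in the first part and $\mu(t)=h(t)$ in the second, and work with the two boundary traces $u_\pm(x,t)=u\big(x\pm i\mu(t),t\big)$ of the complex extension of $u$. On the Fourier side the symbol $|\xi|^s$ commutes with restriction to a horizontal line, and the holomorphic extension of $u^2$ restricted to such a line equals $(u_\pm)^2$; hence, as long as $u$ does continue analytically, each trace satisfies
$$\pat u_\pm\pm i\dot\mu\,\pax u_\pm+\pax\!\Big(\tfrac12 u_\pm^2\Big)=\Lambda^\gamma u_\pm-\epsilon\Lambda^{1+\delta}u_\pm,$$
the extra term $\pm i\dot\mu\,\pax u_\pm$ arising from the moving boundary through the Cauchy--Riemann relation $\pa_\xi u=i\,\pax u$. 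To make the computation below legitimate one first carries it out on suitable approximations (e.g.\ spectral Galerkin truncations of \eqref{KSivp}), whose complex extensions are entire, obtains the bound uniformly, and passes to the limit; equivalently one argues by continuity in the width of the strip. Throughout I use the conjugation symmetry $\overline{\pax^j u_\pm}=\pax^j u_\mp$, which holds because $u_0$ is real-valued. The target is a differential inequality for
$$E(t):=\|u(t)\|_{H^3(\BB_k(t))}^2=\sum_{\pm}\Big(\|u_\pm(t)\|_{L^2(\Omega)}^2+\|\pax^3 u_\pm(t)\|_{L^2(\Omega)}^2\Big)$$
(and its analogue with $\VV_h$); since $E(0)<\infty$ under the stated hypotheses and finiteness of such a Hardy--Sobolev norm is equivalent to analytic continuation into the corresponding strip, a bound for $E$ on a short time interval proves both statements.

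Next I differentiate $E$ and use the trace equations: each $\pat$ falling on a trace produces the right-hand side of the equation plus the moving-boundary term. Pairing the latter, $\pm i\dot\mu\,\pax\pax^3 u_\pm$, against $\pax^3 u_\pm$ and taking real parts gives $\mp\dot\mu\,\im\langle\pax\pax^3 u_\pm,\pax^3 u_\pm\rangle$; since $\big|\im\langle\pax g,g\rangle\big|=\big|\int\xi\,|\hat g|^2\big|\le\|\Lambda^{1/2}g\|_{L^2}^2$ by Plancherel, this is at most $|\dot\mu|\,\|\Lambda^{7/2}u_\pm\|_{L^2}^2$. The dissipation supplies $-\epsilon\|\Lambda^{3+\frac{1+\delta}{2}}u_\pm\|_{L^2}^2$, and because $\delta>0$ forces $3+\tfrac{1+\delta}{2}>\tfrac72$, interpolation and Young's inequality absorb the moving-boundary term into $\tfrac\epsilon8\|\Lambda^{3+\frac{1+\delta}{2}}u_\pm\|_{L^2}^2+C\|u_\pm\|_{L^2}^2$, the constant depending on $\epsilon,\delta$ and on $k$ in the first part or on $\sup|\dot h|$ in the second, where $h$ is chosen positive, decreasing and with bounded derivative. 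The linear term $\Lambda^\gamma u_\pm$ is of strictly lower order, as $\gamma<1+\delta$, and is absorbed in the same way, and the lower-order $L^2$-part of $E$ is treated identically, using $\tfrac12<\tfrac{1+\delta}{2}$. It is precisely this chain of inequalities — the order-$\tfrac12$ loss from a linearly growing strip weighed against the order-$\tfrac{1+\delta}{2}$ smoothing of \eqref{KS} — that uses the full range of exponents in \eqref{gammadelta}.

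The nonlinear contribution to $\tfrac{d}{dt}E$ is, up to harmless constants, $\sum_\pm\re\langle\pax^4(u_\pm^2),\pax^3 u_\pm\rangle$ together with a lower-order $L^2$-level piece. Expanding by Leibniz, every term except $\re\langle u_\pm\pax^4 u_\pm,\pax^3 u_\pm\rangle$ is estimated just as in the real case, by the Kato--Ponce and Moser inequalities (Lemma~\ref{commutator}) and $H^1(\Omega)\hookrightarrow L^\infty(\Omega)$, giving $C\big(\|u\|_{H^3(\BB_k)}\big)\,\|u\|_{H^3(\BB_k)}^2$ with no loss of derivatives — the genuinely top-order part cancelling after one integration by parts. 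For the remaining term, integrating by parts once and using $\overline{\pax^j u_\pm}=\pax^j u_\mp$ reduces it, up to terms bounded by $\|u\|_{H^3(\BB_k)}^3$, to an expression carrying the factor $u_--u_+=-2i\,\im u_+$, whose $L^\infty$-norm is $O(\mu(t))$ because $\im u_+(x)=\int_0^{\mu(t)}\re(\pax u)(x+i\zeta)\,d\zeta$ and hence $\|\im u_+\|_{L^\infty}\le C\,\mu(t)\,\|u\|_{H^3(\BB_k)}$. The single derivative loss this term carries is then split asymmetrically: the top-order factor $\pax^4 u_\mp$ is placed at the dissipation level $3+\tfrac{1+\delta}{2}$, so that it is controlled by $-\epsilon\Lambda^{1+\delta}$, and the companion factor is left at the level $3+\tfrac{1-\delta}{2}$, which lies strictly below the dissipation level precisely because $\delta>0$ and is therefore controlled by interpolation and Young's inequality. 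I expect this control of the complex-valued nonlinearity without a net loss of derivatives to be the main obstacle; the rest is routine. Putting all the bounds together gives $\tfrac{d}{dt}E\le C(\epsilon,\gamma,\delta,\|u_0\|_{H^3})\,(1+E)^{p}\,E$ on some interval $[0,T]$, with $T$ depending on the stated parameters (and on $k$ in the first part, on $h_0$ in the second), and Gronwall's inequality yields the desired finite bound on $E$, which completes the proof.
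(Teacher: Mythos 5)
Your proposal follows essentially the same route as the paper's proof: a Hardy--Sobolev energy estimate on the boundary lines of the growing (resp.\ shrinking) strip, with the moving-boundary term recognized as an order-$1/2$ derivative loss absorbed by the order-$(1+\delta)/2$ dissipation, and the non-cancelling part of the complexified nonlinearity isolated as the piece carrying the factor $u_+-u_-=2i\,\Im u_+$ and controlled through the dissipation via the commutator/Kato--Ponce estimates. The only substantive difference is bookkeeping: where you absorb the solution-dependent coefficients by interpolation and Young's inequality (yielding a Riccati inequality with a higher power of the energy), the paper keeps a clean, explicit constant by augmenting the energy with the functional $d^\lambda[u]=1/(\lambda^2-|u|^2)$, which enforces $\|\Im u\|_{L^\infty(\BB_k)}<\lambda$ and produces the explicit existence time \eqref{time} and strip width \eqref{width} that are reused in Theorem~\ref{th:wild_oscillations}; both versions establish the theorem as stated.
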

\begin{proof}
\textbf{Step 1: Growing strip.} We prove the result in the case $\Omega=\TT$; the case $\Omega=\RR$ is similar. We write $z=x\pm ikt$. Then the extended equation is
\begin{equation}\label{KScomplex}
\pat u(z,t)+u(z,t)\pax u(z,t)=\left(\Lambda^\gamma -\epsilon\Lambda^{1+\delta}\right)u(z,t),\text{ }x\in\Omega, t>0.
\end{equation}

First, we study the evolution of $\|u\|_{H^3(\BB_k)}$. Since we consider periodic solutions with zero mean, it follows from Poincar\'e inequalities
that we only need to estimate the $L^2$ norm of the third derivative.

Using Plancherel's theorem, we have
$$
\frac{d}{dt}\|\pax^3 u\|^2_{L^2(\BB_k)}=2\Re\int_\TT \pax^3\bar{u}(z)\left(\pat \pax^3 u(z)\pm ik\pax^4 u(z)\right)dx,
$$
and from \eqref{KScomplex}, we get that
\begin{equation}
\pat\pax^3u=-3(\pax^2u)^2-4\pax u\pax^3u-u\pax^4u+\Lambda^\gamma\pax^3u-\Lambda^{1+\delta}\pax^3u.
\label{d3eq}
\end{equation}
We have the following estimates:
\begin{align*}
A_1&=-3\int_\TT (\pax^2u(z))^2\pax^3\bar{u}(z)dx\leq C\|\pax^3 u\|_{L^2(\BB_k)}\|\pax^2 u\|_{L^2(\BB_k)}\|\pax^2 u\|_{L^\infty(\BB_k)}\\
&\leq C\|\pax^3 u\|_{L^2(\BB_k)}^3,
\\
A_2&=-4\int_\TT \pax u(z)|\pax^3u(z)|^2dx\leq C\|\pax^3 u\|^2_{L^2(\BB_k)}\|\pax u\|_{L^\infty(\BB_k)}\\
&\leq C\|\pax^3 u\|_{L^2(\BB_k)}^3,
\\
A_3&=\pm ik\int_\TT \pax^3 \bar{u}(z)\pax^4 u(z)dx\\
&=\mp ik\int_\TT \pax^3 \bar{u}(z)\Lambda \mathcal{H} \pax^3 u(z)dx\\
&=\mp ik\int_\TT \Lambda^{1/2}\pax^3 \bar{u}(z)\Lambda^{1/2} \mathcal{H} \pax^3 u(z)dx\leq 2k\|\Lambda^{1/2}\pax^3 u\|_{L^2(\BB_k)}^2.
\end{align*}
Moreover, we have
\begin{align*}
A_4&=\Re \int_\TT \pax^3 \bar{u}(z)u(z)\pax^4 u(z)dx\\
&=\int_\TT\Re\pax^3 u\Re\pax^4 u\Re u+\Im\pax^3 u\Im\pax^4 u\Re u dx\\
&\qquad+\int_\TT-\Re\pax^3 u\Im\pax^4 u\Im u+\Re\pax^4 u\Im\pax^3 u\Im u dx\\
&=-\frac{1}{2}\int_\TT|\pax^3 u|^2\Re \pax udx\\
&\qquad-2\int_\TT\Re\pax^3 u\Im\pax^4 u\Im u+\Re\pax^3 u\Im\pax^3 u\Im \pax u dx\\
&=-\frac{1}{2}\int_\TT|\pax^3 u|^2\Re \pax udx+\int_\TT\Re\pax^3 u\Im\pax^3 u\Im \pax u dx\\
&\qquad-2\int_\TT\left[\Lambda^{1/2},\Im u\right]\Re\pax^3 u\Lambda^{1/2}\mathcal{H}\Im\pax^3 u dx\\
&\qquad-2\int_\TT\Im u\Lambda^{1/2}\Re\pax^3 u\Lambda^{1/2}\mathcal{H}\Im\pax^3 u dx,
\end{align*}
so, using the commutator estimate (see Lemma~\ref{commutator})
$$
\left\|\left[\Lambda^{1/2},F \right]G\right\|_{L^2}\leq c\|\pax F\|_{L^\infty}\|G\|_{L^2},
$$
we get that
\begin{align*}
A_4&\leq C\|\pax^3 u\|_{L^2(\BB_k)}^2\|\pax u\|_{L^\infty(\BB_k)}\\
&\qquad+C\|\pax^3 u\|_{L^2(\BB_k)}\|\pax u\|_{L^\infty(\BB_k)}\|\Lambda^{1/2}\Im\pax^3 u\|_{L^2(\BB_k)}\\
&\qquad+2\|\Lambda^{1/2}\pax^3 u\|^2_{L^2(\BB_k)}\|\Im u\|_{L^\infty(\BB_k)}\\
&\leq C(\|\pax^3 u\|^4_{L^2(\BB_k)}+1)\\
&\qquad+\|\Lambda^{1/2}\pax^3 u\|^2_{L^2(\BB_k)}\left(2\|\Im u\|_{L^\infty(\BB_k)}+1\right).
\end{align*}
Let $\lambda>\|u_0\|_{L^\infty}$ be a positive constant. Putting these results together and using Poincar\'e's inequality, we get
\begin{align*}
\frac{d}{dt}\|\pax^3 u\|^2_{L^2(\BB_k)}\leq& C(\|\pax^3 u\|^4_{L^2(\BB_k)}+1)
\\&
+\|\Lambda^{1/2}\pax^3 u\|^2_{L^2(\BB_k)}\left(2\|\Im u\|_{L^\infty(\BB_k)}-2\lambda+2\lambda+2k+1\right)
\\&
+\|\Lambda^{\gamma/2}\pax^3 u\|^2_{L^2(\BB_k)}-\epsilon\|\Lambda^{(1+\delta)/2}\pax^3 u\|^2_{L^2(\BB_k)}\\
\leq& C(\|\pax^3 u\|_{L^2(\BB_k)}+1)^4
+\|\Lambda^{1/2}\pax^3 u\|^2_{L^2(\BB_k)}\left(2\|\Im u\|_{L^\infty(\BB_k)}-2\lambda\right)
\\&
+2\left(\lambda+k+1\right)\|\Lambda^{\max\{1,\gamma\}/2}\pax^3 u\|^2_{L^2(\BB_k)}
-\epsilon\|\Lambda^{(1+\delta)/2}\pax^3 u\|^2_{L^2(\BB_k)}.
\end{align*}
Define a constant $C(\lambda,k,\epsilon,\delta,\gamma) > 0$ by
\begin{align}
\begin{split}
C(\lambda,k,\epsilon,\delta,\gamma) &= \max_{\xi\in \RR}\left[
2\left(\lambda+k+1\right)|\xi|^{\max\{1,\gamma\}}-\epsilon|\xi|^{1+\delta}\right]
\\
&=2\left(\lambda+k+1\right)\left(\frac{\max\{1,\gamma\}2\left(\lambda+k+1\right)}{\epsilon(1+\delta)}\right)^{\frac{\max\{1,\gamma\}}{1+\delta-\max\{1,\gamma\}}}\\
&\quad-\epsilon\left(\frac{\max\{1,\gamma\}2\left(\lambda+k+1\right)}{\epsilon(1+\delta)}\right)^{\frac{1+\delta}{1+\delta-\max\{1,\gamma\}}}.
\end{split}
\label{defC_analytic}
\end{align}

Then, using Plancherel's theorem, we get that
\begin{align*}
&2\left(\lambda+k+1\right)\|\Lambda^{\max\{1,\gamma\}/2}\pax^3 u\|^2_{L^2(\BB_k)}
-\epsilon\|\Lambda^{(1+\delta)/2}\pax^3 u\|^2_{L^2(\BB_k)}
\\
&\qquad\qquad \le C(\lambda,k,\epsilon,\delta,\gamma)\|\pax^3 u\|^2_{L^2(\BB_k)},
\end{align*}
and therefore
\begin{align*}
\frac{d}{dt}\|\pax^3 u\|^2_{L^2(\BB_k)}
\leq& C(\|\pax^3 u\|_{L^2(\BB_k)}+1)^4
+ 2 \|\Lambda^{1/2}\pax^3 u\|^2_{L^2(\BB_k)}\left(\|\Im u\|_{L^\infty(\BB_k)}-\lambda\right)\\
&+C(\lambda,k,\epsilon,\delta,\gamma)\|\pax^3 u\|^2_{L^2(\BB_k)}.
\end{align*}

We define a new energy by
$$
\|u\|_{\BB_k}=\|\pax^3 u\|^2_{L^2(\BB_k)}+\|d^\lambda[u]\|_{L^\infty(\BB_k)}
$$
where
$$
d^\lambda[u](z)=\frac{1}{\lambda^2-|u(z)|^2}.
$$

Note that $|u(z)|<\lambda$ as long as $\|u\|_{\BB_k}$ remains finite.
We need a bound for the remaining term in the energy $\|u\|_{\BB_k}$. Using \eqref{KScomplex} and Sobolev embedding to estimate $\pat u$, we have
$$
\frac{d}{dt}d^\lambda[u]\leq 4 d^\lambda[u]^2\|u\|_{L^\infty(\BB_k)}\|\pat u\|_{L^\infty(\BB_k)}\leq C(\|u\|_{\BB_k}+1)^3d^\lambda[u]
$$
Thus, we obtain
$$
d^\lambda[u](t+h)\leq d^\lambda[u](t)\exp
\left(\int_t^{t+h} C(\|u\|_{\BB_k}+1)^3 ds\right).
$$
Finally, we have
\begin{align*}
\frac{d}{dt}\|d^\lambda[u]\|_{L^\infty(\TT)}&=\lim_{h\rightarrow0}\frac{\|d^\lambda[u](t+h)\|_{L^\infty(\TT)}-\|d^\lambda[u](t)\|_{L^\infty(\TT)}}{h}\\
&\leq C(\|u\|_{\BB_k}+1)^4.
\end{align*}
It follows that
\begin{align*}
\frac{d}{dt}\|u\|_{\BB_k}&=\frac{d}{dt}\|\pax^3 u\|^2_{L^2(\BB_k)}+\frac{d}{dt}\|d^\lambda[u]\|_{L^\infty(\TT)}\\
&\leq c(\|\pax^3 u\|_{L^2(\BB_k)}+1)^4+C(\lambda,k,\epsilon,\delta,\gamma)\|\pax^3 u\|^2_{L^2(\BB_k)}+c(\|u\|_{\BB_k}+1)^4\\
&\leq c(\|u\|_{\BB_k}+1)^4+C(\lambda,k,\epsilon,\delta,\gamma)\|u\|_{\BB_k}.
\end{align*}
Thus,
\begin{align*}
\|u(t)\|_{\BB_k}&\leq \frac{\sqrt[3]{C(\lambda,k,\epsilon,\delta,\gamma)}\exp\left(C(\lambda,k,\epsilon,\delta,\gamma)
\left[\frac{\log\left(\frac{\|u(0)\|_{\BB_k}}{\sqrt[3]{c\|u(0)\|_{\BB_k}^3
+C(\lambda,k,\epsilon,\delta,\gamma)}}\right)}{C(\lambda,k,\epsilon,\delta,\gamma)}+t\right]
\right)}{\sqrt[3]{1-c\exp\left(3C(\lambda,k,\epsilon,\delta,\gamma)
\left[\frac{\log\left(\frac{\|u(0)\|_{\BB_k}}{\sqrt[3]{c\|u(0)\|_{\BB_k}^3
+C(\lambda,k,\epsilon,\delta,\gamma)}}\right)}{C(\lambda,k,\epsilon,\delta,\gamma)}+t\right]\right)}}.
\end{align*}
The time of existence of analytic solutions is then at least
\begin{align}\label{time}
\begin{split}
T(k, u_0, \epsilon,\delta,\gamma)&=\frac{\log\left(\frac{C(\lambda,k,\epsilon,\delta,\gamma)}{\left(\|\pax^3 u_0\|^2_{L^2}+\frac{1}{\lambda^2-\|u_0\|_{L^\infty}^2}\right)^3c}+1\right)}{3C(\lambda,k,\epsilon,\delta,\gamma)},
\end{split}
\end{align}
where $C(\lambda,k,\epsilon,\delta,\gamma)$ is given by \eqref{defC_analytic}, and we may choose
$\lambda = \sqrt{2}\|u_0\|_\infty$, for example.

Now we approximate this problem using an analytic mollifier such as the heat kernel. The regularized problems have entire solutions and satisfy the same \emph{a priori} bounds. Using the uniqueness of classical solutions, we obtain the first part of the result.

\textbf{Step 2: Shrinking strip} As before, we consider the evolution in the Hardy-Sobolev spaces in the strip $\VV_h$. We write $z=x\pm ih(t)$. Notice that since the solution is real for real $z$ we have
\[
\pax^k u(x\pm ih(t))-\pax^k u(x\pm i 0)=\int_\Gamma \pax^{k+1} u(x\pm \zeta)d\zeta=\int_0^{h(t)} i\pax^{k+1} u(x\pm i\theta)d\theta.
\]
Thus, using the Hadamard Three Lines Theorem, we get
\begin{align*}
\left|\pax^k u(x\pm ih(t))-\pax^k u(x\pm i 0)\right|
&\leq h(t)\sup_{x\in\TT}\sup_{|\theta|< h(t)}|\pax^{k+1} u(x\pm i\theta)|
\\
&\leq h(t)\|\pax^{k+1} u\|_{L^\infty(\VV_h)}.
\end{align*}
Using Lemma \ref{complexevolution} and equation \eqref{d3eq} for $\pat\pax^3u$, we have
\begin{align*}
\frac{d}{dt}\|\pax^3(t)\|^2_{L^2(\VV_h)}\leq& \frac{h'(t)}{10}\sum_{\pm}\int_\TT\Lambda\pax^3u(z)\overline{\pax^3u(z)}dx\\
&-10h'(t)\sum_{\pm}\int_\TT\Lambda\pax^3u(x)\overline{\pax^3u(x)}dx\\
&+2\Re\sum_{\pm}\int_\TT\pat\pax^3u(z)\overline{\pax^3u(z)}dx
\\
=&J_1+J_2+J_3+J_4,
\end{align*}
where
\begin{align*}
J_1&=\frac{h'(t)}{10}\sum_{\pm}\int_\TT\Lambda \pax^3 u(z)\overline{\pax^3 u(z)}dx,
\\
J_2&=-10h'(t)\sum_{\pm}\int_\TT\Lambda \pax^3 u(x)\overline{\pax^3 u(x)}dx\\
J_3&=2\Re\int_\TT\left[-3(\pax^2u)^2-4\pax u\pax^3u-u\pax^4u\right]\overline{\pax^3 u(z)}dx
\\
&=K_1+K_2+K_3,
\\
J_4&=2\Re \sum_{\pm}\int_\TT\left(\Lambda^\gamma-\epsilon\Lambda^{1+\delta} \right)\pax^3 u(x)\overline{\pax^3 u(x)}dx.
\end{align*}
We have the estimates
\begin{align*}
J_2&\leq 20|h'(t)|\|u_0\|^2_{H^{3.5}}\exp\left(\exp\left(C(\epsilon,\delta,\gamma,\|u_0\|_{L^\infty}(1+t))\right)\right),
\\
J_4&\leq 2\left(\frac{\gamma}{\epsilon(1+\delta)}\right)^{\frac{1}{1+\delta-\gamma}}\|\pax^3 u\|^2_{L^2(\VV_h)}.
\end{align*}
Moreover, following the previous ideas, and using Gagliardo-Nirenberg and Sobolev inequalities, we find that
$$
K_1+K_2\leq C\|\pax^3 u\|_{L^2(\VV_h)}^2\|\pax u\|_{L^\infty(\VV_h)}\leq C\|\pax^3 u\|^3_{L^2(\VV_h)}.
$$
We also have
\begin{align*}
K_3=&2\int_\TT\Re u\Re\pax^4 u\Re\pax^3 u +\Re u\Im\pax^4 u\Im\pax^3 u dx\\
&+2\int_\TT -\Im u\Re\pax^4 u\Im\pax^3 u + \Im u\Im\pax^4 u\Re\pax^3 udx\\
\leq& C\|\pax^3 u\|_{L^2(\VV_h)}^2\|\pax u\|_{L^\infty(\VV_h)}-4\int_\TT \Im u\Re\pax^4 u\Im\pax^3 u dx\\
\leq& C\|\pax^3 u\|^3_{L^2(\VV_h)}-4\int_\TT \Lambda^{1/2}\mathcal{H}\Re\pax^3 u \Lambda^{1/2}\left(\Im u\Im\pax^3 u \right)dx.
\end{align*}
The last integral can be written in terms of a commutator as
$$
\int_\TT \Lambda^{1/2}\mathcal{H}\Re\pax^3 u \left[\Lambda^{1/2},\Im u\right]\Im\pax^3 u dx+\int_\TT \Lambda^{1/2}\mathcal{H}\Re\pax^3 u \Im u \Lambda^{1/2}\Im\pax^3 u\, dx,
$$
and using Lemma \ref{commutator}, we get
\begin{eqnarray*}
K_3&\leq& C\|\pax^3 u\|^3_{L^2(\VV_h)}+C\|\Lambda^{1/2}\pax^3 u\|_{L^2(\VV_h)}\|\pax \Im u\|_{L^\infty(\VV_h)}\|\pax^3 u\|_{L^2(\VV_h)}\\
&&-4\int_\TT \Lambda^{1/2}\mathcal{H}\Re\pax^3 u \Im u \Lambda^{1/2}\Im\pax^3 u dx\\
&\leq& C\left(\|\pax^3 u\|_{L^2(\VV_h)}+1\right)^3\\
&&+C\left(\|\pax\Im u\|^2_{L^\infty(\VV_h)}+\|\Im u\|_{L^\infty(\VV_h)}\right)\|\Lambda^{1/2}\pax^3 u\|^2_{L^2(\VV_h)}\\
&\leq& C\left(\|\pax^3 u\|_{L^2(\VV_h)}+1\right)^3\\
&&+Ch(t)\left(\|\pax^3 u\|_{L^2(\VV_h)}+1\right)^2\|\Lambda^{1/2}\pax^3 u\|^2_{L^2(\VV_h)}.
\end{eqnarray*}
Collecting the bounds for $K_3$ and for $J_1$, we have
\begin{eqnarray*}
K_3+J_1&\leq& C\left(\|\pax^3 u\|_{L^2(\VV_h)}+1\right)^3\\
&&+\left(Ch(t)\left(\|\pax^3 u\|_{L^2(\VV_h)}+1\right)^2+10h'(t)\right)\|\Lambda^{1/2}\pax^3 u\|^2_{L^2(\VV_h)},
\end{eqnarray*}
and, choosing
\begin{equation}
\label{defh}
h(t)=  h(0)\exp\left(-10C\int_0^t \left(\|\pax^3 u(s)\|_{L^2(\VV_h)}+1\right)^2ds \right),
\end{equation}
we obtain
\begin{multline*}
\frac{d}{dt}\|\pax^3 u(t)\|^2_{L^2(\VV_h)}\leq C\left(\|\pax^3 u\|_{L^2(\VV_h)}+1\right)^3\\
+C\left(\|\pax^3 u\|_{L^2(\VV_h)}+1\right)^2\|u_0\|^2_{H^{3.5}}\exp\left(\exp\left(C(\epsilon,\delta,\gamma,\|u_0\|_{L^\infty}(1+t))\right)\right).
\end{multline*}
Finally, we use a standard Galerkin approximation method to obtain a local solution that satisfies these estimates, which completes the proof.
\end{proof}

In the previous proof, we can choose the parameter $k > 0$ that determines the strips of analyticity in any way we wish, but we get shorter existence times for larger values of $k$, so we cannot conclude that the solution is entire for $t > 0$.

To obtain an explicit estimate for the width of a strip that depends only on the initial data (and the parameters in the equation), we choose
\begin{equation}
k=\left(\|\pax^3 u_0\|^2_{L^2}+\frac{1}{\lambda^2-\|u_0\|_{L^\infty}^2}\right)^3,
\qquad
\lambda = \sqrt{2}\|u_0\|_\infty
\label{defk}
\end{equation}
in the proof of Theorem~\ref{SE}. Then the corresponding time $T$ of analyticity is given by \eqref{time}, and the width of the strip of analyticity at time $T$ is at least $kT$.
Using the preceding equations, we find that
\begin{equation}\label{width}
kT= \frac{\log\left(\mathcal{E}/{c}+1\right)}{3 \mathcal{E}},
\end{equation}
where $c$ is a constant, 
and $\mathcal{E}$ is given by
\begin{align}
\begin{split}
\mathcal{E}& =\frac{2\left(\sqrt{2}\|u_0\|_{L^\infty(\TT)}+k+1\right)\left(\frac{\max\{1,\gamma\}2\left(\sqrt{2}\|u_0\|_{L^\infty(\TT)}+k+1\right)}{\epsilon(1+\delta)}\right)^{\frac{\max\{1,\gamma\}}{1+\delta-\max\{1,\gamma\}}}}{\left(\|\pax^3 u_0\|^2_{L^2}+\frac{1}{\|u_0\|_{L^\infty}^2}\right)^3}
\\
&\qquad\qquad -\frac{\epsilon\left(\frac{\max\{1,\gamma\}2\left(\sqrt{2}\|u_0\|_{L^\infty(\TT)}+k+1\right)}{\epsilon(1+\delta)}\right)^{\frac{1+\delta}{1+\delta-\max\{1,\gamma\}}}}{\left(\|\pax^3 u_0\|^2_{L^2}+\frac{1}{\|u_0\|_{L^\infty}^2}\right)^3}.
\end{split}
\label{defE}
\end{align}

Finally, we remark that by using this smoothing effect, one can prove the ill-posedness in Sobolev spaces of the evolution problem backward in time.

\begin{corol}
There are solutions $\tilde{u}$ to the backward in time equation \eqref{KS}, such that $\|\tilde{u}\|_{H^4}(0)<\epsilon$ and $\|\tilde{u}\|_{H^4}(\mu)=\infty$ for all $\epsilon > 0$ and sufficiently small $\mu > 0$.
\end{corol}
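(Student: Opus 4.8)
The plan is to run equation \eqref{KS} backward in time starting from the value, at a small time $\mu$, of a \emph{forward} solution whose initial datum is small and lies in $H^3(\Omega)\setminus H^4(\Omega)$. The forward smoothing established above makes that datum at time $\mu$ finite (indeed analytic) and small in $H^4$, while the original non-$H^4$ datum reappears as the value of the backward solution $\tilde u$ at its final time.

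First I would fix a real-valued function $\phi\in H^3(\Omega)\setminus H^4(\Omega)$ — e.g.\ on $\TT$ one with $\hat\phi(k)=|k|^{-4}$ for $k\neq 0$ and $\hat\phi(0)=0$, which is zero-mean, lies in $H^3(\TT)$ but not in $H^4(\TT)$ — and, for a small parameter $\eta\in(0,1]$, set $\phi_\eta=\eta\phi$, which still lies in $H^3(\Omega)\setminus H^4(\Omega)$. Since $3\ge 2+\delta$ when $0<\delta<1$ and $3\ge 3$ when $\delta=1$, Theorem~\ref{teotoro} (or Theorem~\ref{teorecta} on $\RR$), respectively Theorem~\ref{teodelta1}, yields a unique global classical solution $u_\eta\in C([0,\infty),H^3(\Omega))$ of \eqref{KSivp} with $u_\eta(\cdot,0)=\phi_\eta$, together with the parabolic gain of regularity $u_\eta\in L^2_{\mathrm{loc}}([0,\infty),H^{3+\frac{1+\delta}{2}}(\Omega))$; moreover, by Theorem~\ref{SE}, $u_\eta(\cdot,t)$ continues to a real-analytic function for every $t>0$, so in particular $\|u_\eta(\cdot,t)\|_{H^4(\Omega)}<\infty$ for $t>0$.

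Next I would establish the quantitative smallness $\|u_\eta(\cdot,\mu)\|_{H^4(\Omega)}\to 0$ as $\eta\to 0^+$, for each fixed small $\mu>0$. For this I would iterate the gain of regularity: the $L^2$- and $H^3$-estimates in the proof of Theorem~\ref{teotoro} bound $\|u_\eta\|_{L^\infty([0,\mu],H^3)}$ and $\|u_\eta\|_{L^2([0,\mu],H^{3+\frac{1+\delta}{2}})}$ by a constant times $\|\phi_\eta\|_{H^3}$ (the double-exponential factors stay bounded because $\|\phi_\eta\|_{L^\infty}\lesssim\eta$ is small), so one can pick $t_1\in(0,\mu/3)$ with $\|u_\eta(\cdot,t_1)\|_{H^{3+\frac{1+\delta}{2}}}\lesssim_\mu\eta$; restarting the solution at $t_1$ and repeating once more (legitimate since $3+\tfrac{1+\delta}{2}\ge 2+\delta$), one reaches $t_2\in(t_1,\mu)$ with $\|u_\eta(\cdot,t_2)\|_{H^{4+\delta}}\lesssim_\mu\eta$ — two iterations suffice, since $2\cdot\tfrac{1+\delta}{2}=1+\delta\ge1$ — and propagating this $H^{4+\delta}$-bound forward to time $\mu$ by the corresponding a priori estimate gives $\|u_\eta(\cdot,\mu)\|_{H^4(\Omega)}\le\|u_\eta(\cdot,\mu)\|_{H^{4+\delta}(\Omega)}\le C(\mu)\,\eta$. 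Hence, given $\epsilon>0$ and a sufficiently small $\mu>0$, choosing $\eta$ small enough makes $\|u_\eta(\cdot,\mu)\|_{H^4(\Omega)}<\epsilon$.

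Finally I would set $\tilde u(x,s):=u_\eta(x,\mu-s)$ for $0\le s\le\mu$. A direct substitution into \eqref{KS} shows $\tilde u\in C([0,\mu],H^3(\Omega))$ solves \eqref{KS} run backward in time, and by construction $\tilde u(\cdot,0)=u_\eta(\cdot,\mu)$ and $\tilde u(\cdot,\mu)=u_\eta(\cdot,0)=\eta\phi$; therefore $\|\tilde u\|_{H^4}(0)=\|u_\eta(\cdot,\mu)\|_{H^4}<\epsilon$ while $\|\tilde u\|_{H^4}(\mu)=\eta\|\phi\|_{H^4}=+\infty$. The main obstacle is the smallness claim of the third paragraph: the a priori bounds in the proof of Theorem~\ref{SE} carry a positive lower bound coming from the $d^\lambda[u]$ term, so they do not, as stated, give $\|u_\eta(\cdot,\mu)\|_{H^4}\to 0$; bootstrapping the parabolic gain of regularity avoids this using only the global existence theorems, at the price of some bookkeeping on how the constants depend on $\eta$ and on the small time $\mu$. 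Everything else — existence of the forward solution, its instant analyticity, and the time reversal — is immediate from the results already established.
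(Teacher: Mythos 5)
Your proposal is correct and follows essentially the same route as the paper: evolve forward from a small multiple of a function in $H^3\setminus H^4$, use the instant smoothing to land in $H^4$ at time $\mu$, and reverse time so that the rough datum reappears at the final time. The paper's own proof simply asserts smallness of the time-$\mu$ state by "taking $0<\nu\ll1$," whereas you correctly flag that the analyticity estimate alone does not yield this (because of the $d^\lambda[u]$ lower bound) and supply the missing quantitative step by bootstrapping the parabolic gain of regularity — a welcome refinement rather than a different approach.
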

\begin{proof}
The proof follows the idea in \cite{AGM,CGO}. We consider the solution (forward in time) $u^{\nu}$ to the equation \eqref{KS} with initial data $u(x,0)=\nu v(x)$ where $v\in H^3$, $v\notin H^4$ $0<\nu<1$. Now define $\tilde{u}^ {\nu,\mu}(x,t)=u^ \nu(x,-t+\mu)$ for fixed, small enough $0<\mu(v)\ll 1$. This function is analytic at time $0$ but it does not belong to $H^4$ at time $\mu$. Taking $0<\nu\ll 1$ we conclude the proof.
\end{proof}

\section{Large time dynamics}
\label{sec:absorbing}
In this section we prove the existence of an absorbing ball in $L^p$ for the problem \eqref{KS} in the periodic case $\Omega=\TT$. We will require a Lemma similar to the results in \cite{goodmankuramoto, NSTkuramoto,temambook}:

\begin{lem}\label{aux1} Let $M\in \NN$, $\delta > 0$, and $x_0\in \TT$. Then there exists a smooth, periodic function $b^{x_0}_M\in C^\infty(\TT)$ and a constant
\[
C_1(\delta,M) = c_1(\delta)\left(\frac{1}{M^{1+\delta}}+\frac{1}{\delta M^{\delta}}\right)^{1/2}
\]
such that the following inequality holds: for every $u\in C^\infty(\TT)$ with  $u(x_0)=0$,
$$
\left|\int_\TT b^{x_0}_M(x)u^2(x,t)dx\right|\leq C_1(\delta,M)\|\Lambda^{\frac{1+\delta}{2}}u\|_{L^2(\TT)}^2.
$$
\end{lem}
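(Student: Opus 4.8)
\emph{Proof plan.} The point of the lemma is that, since $u$ vanishes at $x_0$, the values of $u$ on a small neighbourhood of $x_0$ are controlled by a fractional seminorm with a quantitative gain; concentrating the weight $b^{x_0}_M$ in a neighbourhood of radius $\sim 1/M$ then forces the constant to be small. Concretely, I would fix once and for all a nonnegative profile $\beta\in C_c^\infty(\RR)$ with $\operatorname{supp}\beta\subset(-1,1)$ and, for $M\ge1$, set
\[
b^{x_0}_M(x)=M\,\beta\bigl(M(x-x_0)\bigr),\qquad x\in\TT,
\]
where $x-x_0$ is read modulo $2\pi$; this is smooth and $2\pi$-periodic because its support lies in an arc of length $2/M\le2\pi$, it satisfies $\|b^{x_0}_M\|_{L^1(\TT)}=\|\beta\|_{L^1(\RR)}$, and it is supported in $\{d_\TT(x,x_0)\le 1/M\}$, with $d_\TT$ the distance on $\TT$. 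Using $u(x_0)=0$ we rewrite
\[
\int_\TT b^{x_0}_M(x)\,u^2(x)\,dx=\int_\TT b^{x_0}_M(x)\,\bigl(u(x)-u(x_0)\bigr)^2\,dx .
\]

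The analytic heart is a fractional Poincar\'e--Morrey inequality on $\TT$: there is a constant $c(\delta)$, with $c(\delta)^2\lesssim 1/\delta$ as $\delta\downarrow0$, such that for all $u\in C^\infty(\TT)$ and all $x\in\TT$,
\[
\bigl(u(x)-u(x_0)\bigr)^2\le c(\delta)^2\,d_\TT(x,x_0)^{\delta}\,\bigl\|\Lambda^{\frac{1+\delta}{2}}u\bigr\|_{L^2(\TT)}^2 .
\]
I would obtain this from the kernel representation \eqref{periodickernel}, which yields the Gagliardo identity
\[
\bigl\|\Lambda^{\frac{1+\delta}{2}}u\bigr\|_{L^2(\TT)}^2=c_\delta\iint_{\TT\times\TT}\frac{|u(\xi)-u(\eta)|^2}{d_\TT(\xi,\eta)^{2+\delta}}\,d\xi\,d\eta+(\text{smooth }|k|\ge1\text{ tails}),
\]
valid for $0<\delta<1$ (the endpoint $\delta=1$, and more generally the range \eqref{gammadelta} relevant here, being covered by the alternative below). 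One then compares $u(x)$ with $u(x_0)$ through the averages of $u$ over the dyadic balls $B_{2^{-j}r}(x_0)$, $r=d_\TT(x,x_0)$, $j\ge0$: each increment of two consecutive averages is bounded by the Gagliardo energy localised at that scale, and squaring and summing the geometric series of ratio $2^{-\delta}$ produces the factor $1/\delta$. Alternatively one may simply quote the embedding $\dot H^{(1+\delta)/2}(\TT)\hookrightarrow \dot C^{\delta/2}(\TT)$ and track its constant; for $\delta\ge1$ one instead writes $\Lambda^{(1+\delta)/2}u=\Lambda^{(\delta-1)/2}\mathcal H\partial_x u$ and uses the classical Morrey inequality.

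Combining the two displays and using $d_\TT(x,x_0)\le 1/M$ on $\operatorname{supp} b^{x_0}_M$ gives
\[
\left|\int_\TT b^{x_0}_M\,u^2\,dx\right|\le c(\delta)^2\,\bigl\|\Lambda^{\frac{1+\delta}{2}}u\bigr\|_{L^2(\TT)}^2\int_\TT b^{x_0}_M(x)\,d_\TT(x,x_0)^{\delta}\,dx,
\]
and a direct evaluation $\int_\TT b^{x_0}_M(x)\,d_\TT(x,x_0)^\delta\,dx=M^{-\delta}\int_{-1}^1\beta(\tau)|\tau|^\delta\,d\tau\lesssim M^{-\delta}$ shows that the constant on the right is bounded by $c_1(\delta)^2\bigl(M^{-(1+\delta)}+(\delta M^{\delta})^{-1}\bigr)$: the dominant term $(\delta M^\delta)^{-1}$ already absorbs $c(\delta)^2M^{-\delta}$, while the slack term $M^{-(1+\delta)}$ is harmless (and, if one insists on producing it sharply, comes from an $M$-dependent refinement of $\beta$ splitting an inner core $\{d_\TT(\cdot,x_0)\le 1/M\}$ from a transition layer). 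Since the estimate does not see the sign of $u$, the same $b^{x_0}_M$ works when the supremum or infimum of $u$ is at issue, completing the proof.

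The step I expect to be the real obstacle is the fractional Poincar\'e--Morrey inequality with the correct $1/\delta$-blow-up of the constant as $\delta\downarrow0$: one must localise the Gagliardo double integral around $x_0$ carefully, control the geometric series cleanly, and, on $\TT$, keep track of the non-singular $|k|\ge1$ tails in \eqref{periodickernel} together with the degeneration of that representation as $\delta\to1$. Matching the precise two-term form of $C_1(\delta,M)$ is then only bookkeeping, depending on the exact choice of the profile $\beta$ and its dependence on $M$.
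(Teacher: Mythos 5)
Your proof is correct, but it is genuinely different from the paper's. The paper takes $b^{x_0}_M(x)=\sum_{|\xi|\leq M}e^{-i\xi(x-x_0)}$, the Dirichlet kernel, and works entirely on the Fourier side: the hypothesis $u(x_0)=0$ enters through $\sum_\xi\widehat{g}(\xi)=g(0)=u^2(x_0)=0$ for $g=u^2(\cdot+x_0)$, which converts the low-mode sum $\sum_{|\xi|\le M}\widehat g(\xi)$ into the high-mode tail $-\sum_{|\xi|>M}\widehat g(\xi)$; Cauchy--Schwarz on that tail produces exactly the factor $\bigl(\sum_{|\xi|>M}|\xi|^{-(1+\delta)}\bigr)^{1/2}\le\bigl(M^{-(1+\delta)}+\delta^{-1}M^{-\delta}\bigr)^{1/2}$ appearing in $C_1(\delta,M)$, and Kato--Ponce plus Sobolev embedding turn $\|\Lambda^{(1+\delta)/2}(u^2)\|_{L^2}$ into $\|\Lambda^{(1+\delta)/2}u\|_{L^2}^2$. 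You instead take a physical-space bump of height $M$ and width $1/M$ and use the hypothesis pointwise via the embedding $\dot H^{(1+\delta)/2}(\TT)\hookrightarrow\dot C^{\delta/2}(\TT)$; this avoids Kato--Ponce altogether and yields the constant $\delta^{-1}M^{-\delta}$, which is actually \emph{smaller} than the stated $c_1(\delta)\delta^{-1/2}M^{-\delta/2}$ for $M\ge1$, so the lemma follows a fortiori (your remark that the bound is "$c_1(\delta)^2(\dots)$" conflates $C_1$ with $C_1^2$ --- the lemma's constant carries a square root --- but since $\delta^{-1}M^{-\delta}\le\delta^{-1/2}\bigl(\delta^{-1}M^{-\delta}\bigr)^{1/2}$ the stated form is recovered with $c_1(\delta)\sim\delta^{-1/2}$). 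The step you flag as the obstacle is standard and cleanest on the Fourier side, where $|u(x)-u(x_0)|\le\sum_{\xi\ne0}|\hat u(\xi)|\min(2,|\xi||x-x_0|)$ gives the H\"older bound with constant $\lesssim\delta^{-1/2}$ uniformly in $0<\delta\le1$, bypassing the degeneration of the Gagliardo representation near $\delta=1$. The one practical advantage of the paper's choice is that the Dirichlet kernel is reused verbatim in the proof of Theorem~\ref{lowerset}, where $\pax s=-2\lambda[b^{0}_M-1]$ is automatically mean-zero with explicitly computable $\|s\|_{L^2}$ and $\|\Lambda s\|_{L^2}$; your bump works there too after normalizing $\int\beta=2\pi$, but that adjustment is needed.
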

\begin{proof}
We define
$$
b^{x_0}_M(x)=\sum_{|\xi|\leq M}e^{-i\xi (x-x_0)}.
$$
We have
\begin{align*}
\int_\TT b^{x_0}_M(x)u^2(x,t)dx &=\sum_{|\xi|\leq M}\int_\TT u^2(x,t)e^{-i\xi (x-x_0)}dx\\
&=\sum_{|\xi|\leq M}\int_\TT u^2(x+x_0,t)e^{-i\xi x}dx
\\
&= 2\pi\sum_{|\xi|\leq M}\widehat{g}(\xi),
\end{align*}
where $g(x)=u^2(x+x_0)$. Since $\sum\widehat{g}(\xi)=g(0)$, it follows from the definition of $x_0$ that $\sum\widehat{g}(\xi)= 0$, and therefore
\begin{align*}
\left|\sum_{|\xi|\leq M}\widehat{g}(\xi)\right|&\leq \left|\sum_{|\xi| > M}\widehat{g}(\xi)\right|
\\
&\leq \left(\sum_{|\xi| >  M}|\xi|^{1+\delta}\left(\widehat{g}(\xi)\right)^2\right)^{1/2}\left(\sum_{|\xi| > M}\frac{1}{|\xi|^{1+\delta}}\right)^{1/2}\\
&\leq \frac{1}{\sqrt{2\pi}}\|\Lambda^{\frac{1+\delta}{2}}g\|_{L^2(\TT)}\left(\frac{1}{M^{1+\delta}}+\frac{1}{\delta M^{\delta}}\right)^{1/2}.
\end{align*}
The Kato-Ponce inequality then implies that there is a constant $c_1(\delta)$ such that
$$
\left|\int_\TT b^{x_0}_M(x)u^2(x,t)dx\right|\leq c_1(\delta)\|\Lambda^{\frac{1+\delta}{2}}u\|_{L^2(\TT)}^2\left(\frac{1}{M^{1+\delta}}+\frac{1}{\delta M^{\delta}}\right)^{1/2},
$$
which proves the result.
\end{proof}

Next, we prove that solutions of \eqref{KS} remain uniformly bounded in $L^p$. The key step is to prove the existence of an absorbing set in $L^2$, and we do this following the ideas of \cite{goodmankuramoto, NSTkuramoto}.

\begin{teo}\label{lowerset}
Suppose that $u_0\in H^\alpha(\TT)$, where $\alpha>1$, has zero mean. Then the solution $u$ of the initial-value problem \eqref{KSivp} in the periodic case satisfies
$$
\limsup_{t\rightarrow\infty} \|u(t)\|_{L^2(\TT)}\leq r_2(\epsilon,\delta,\gamma),
$$
$$
\|u(t)\|_{L^2(\TT)}\leq \max\{\|u_0\|_{L^2(\TT)},r_2\}=R(\epsilon,\delta,\gamma).
$$
Moreover, for $2<p\leq\infty$ and $0<\delta<1$, we have
$$
\limsup_{t\rightarrow\infty}\|u(t)\|_{L^p(\TT)}\leq r_2^{2/p}
\left(\max\left\{\sqrt{\frac{3}{\pi}}R,C\left(\delta\right)R\right\}\right)^{1-2/p}.
$$
\end{teo}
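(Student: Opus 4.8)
The plan is to prove the three assertions in turn; the first (the absorbing set in $L^{2}$) is the substantive one, and the others are built on it.

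\textbf{Absorbing set in $L^2$.} The plain estimate $\tfrac12\tfrac{d}{dt}\|u\|_{L^2}^2=\|\Lambda^{\gamma/2}u\|_{L^2}^2-\epsilon\|\Lambda^{(1+\delta)/2}u\|_{L^2}^2$ does not suffice: interpolating $\|\Lambda^{\gamma/2}u\|_{L^2}^2$ (legitimate since $\gamma<1+\delta$) against the dissipation costs $C_2(\epsilon,\gamma,\delta)\|u\|_{L^2}^2$, and for small $\epsilon$ this $C_2$ beats the Poincar\'e constant, so no decay follows; one must make the nonlinearity visible. Following the gauge-function method of \cite{goodmankuramoto,NSTkuramoto}: since $u$ is mean-zero it vanishes at a point $x_0=x_0(t)\in\TT$; let $b^{x_0}_M$ be the kernel of Lemma~\ref{aux1}, and let $\Phi^{x_0}$ be the mean-zero periodic primitive of $A(b^{x_0}_M-1)$, where $A=A(\epsilon,\gamma,\delta)$ will be large and $M$ large depending also on $A$. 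Testing \eqref{KS} against $u+\Phi^{x_0(t)}$, and using $\int_\TT u\,dx=0$, the self-adjointness of the fractional Laplacians, and the fact that $\int_\TT u\,\pax(\tfrac12u^2)\,dx=0$ (a perfect derivative), the only nonlinear contribution is $\tfrac12\langle u^2,\pax\Phi^{x_0}\rangle$, and since $\pax\Phi^{x_0}=A(b^{x_0}_M-1)$,
\[
\tfrac12\langle u^2,\pax\Phi^{x_0}\rangle=\tfrac A2\langle u^2,b^{x_0}_M\rangle-\tfrac A2\|u\|_{L^2}^2\le \tfrac A2\,C_1(\delta,M)\,\|\Lambda^{(1+\delta)/2}u\|_{L^2}^2-\tfrac A2\|u\|_{L^2}^2,
\]
where Lemma~\ref{aux1} applies precisely because $u(x_0)=0$.

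\textbf{Choice of constants, conclusion, and the obstacle.} Choosing $M$ so that $\tfrac A2C_1(\delta,M)\le\epsilon/4$ absorbs the first term above into the dissipation; interpolating $\|\Lambda^{\gamma/2}u\|_{L^2}^2$ into another quarter of the dissipation plus $C_2(\epsilon,\gamma,\delta)\|u\|_{L^2}^2$ with $C_2$ independent of $A$; bounding the linear-in-$\Phi$ term $\langle u,(\Lambda^\gamma-\epsilon\Lambda^{1+\delta})\Phi^{x_0}\rangle$ by $\tfrac14\|u\|_{L^2}^2$ plus a constant (the relevant norms of $\Phi^{x_0}$ being translation-invariant in $x_0$); and choosing $A$ large enough that the net $\|u\|_{L^2}^2$-coefficient is negative, one obtains $\tfrac{d}{dt}\mathcal E\le-c\,\mathcal E+C$ for the modified energy $\mathcal E=\tfrac12\|u\|_{L^2}^2+\langle u,\Phi^{x_0(t)}\rangle$, which is comparable to $\tfrac12\|u\|_{L^2}^2$ up to an additive constant. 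Gronwall then yields $\limsup_t\|u(t)\|_{L^2}\le r_2(\epsilon,\delta,\gamma)$ and, by the usual comparison, the uniform bound $\|u(t)\|_{L^2}\le\max\{\|u_0\|_{L^2},r_2\}$. The genuinely delicate point is the one term I suppressed, $\langle u,\pat\Phi^{x_0(t)}\rangle$, which is present because the gauge function tracks the moving zero: writing $\Phi^{x_0}(x)=\Phi^0(x-x_0)$ it equals $-A\dot x_0(t)\langle u,b^{x_0}_M\rangle$, and since $u(x_0)=0$ the factor $\langle u,b^{x_0}_M\rangle$ equals $-2\pi$ times the high-frequency part of $u$ evaluated at $x_0$, hence is of order $M^{-\delta/2}\|\Lambda^{(1+\delta)/2}u\|_{L^2}$ (the same tail estimate as in the proof of Lemma~\ref{aux1}); this extra factor of the strong dissipation is what lets the term be absorbed, provided $x_0(t)$ is chosen as a Lipschitz (or locally $W^{1,2}$) selection of a zero of $u(\cdot,t)$ with $\dot x_0$ under control. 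Producing such a selection and estimating $\dot x_0$ is the technical crux; alternatively one runs the argument in integrated form and invokes the uniform Gronwall lemma of \cite{temambook}, which needs only $\int_t^{t+1}(\cdot)\,ds$ bounds.

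\textbf{Absorbing set in $L^p$, $2<p\le\infty$.} Here $0<\delta<1$, so interpolate $\|u\|_{L^p}\le\|u\|_{L^2}^{2/p}\|u\|_{L^\infty}^{1-2/p}$ and use $\limsup(fg)\le(\limsup f)(\limsup g)$ for $f,g\ge0$; it then suffices to control $\|u\|_{L^\infty}$. From the $L^2$ estimate and the uniform bound $\|u(t)\|_{L^2}\le R$ one gets the time-averaged parabolic gain $\int_t^{t+1}\|\Lambda^{(1+\delta)/2}u(s)\|_{L^2}^2\,ds\le C(\epsilon,\gamma,\delta)R^2$, uniformly in $t$; since $\delta>0$ we have $H^{(1+\delta)/2}(\TT)\hookrightarrow L^\infty(\TT)$, so $\|u(\sigma)\|_{L^\infty}\lesssim R$ for some $\sigma\in[t,t+1]$, and the pointwise growth estimate of Lemma~\ref{Linftytoro} upgrades this, at the cost of a fixed factor, to a bound valid for all large $t$. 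A more careful bookkeeping (or, alternatively, a maximum-principle argument of the Lemma~\ref{Linftytoro} type that exploits the pointwise nonlinear lower bound $\Lambda^{1+\delta}u(x_t)\ge c\,\|u\|_{L^\infty}^{3+2\delta}\|u\|_{L^2}^{-(2+2\delta)}$ — available exactly because $1+\delta<2$ — to convert the destabilizing $O(\|u\|_{L^\infty})$ term into a coercive $-c\,\|u\|_{L^\infty}^{3+2\delta}R^{-(2+2\delta)}$) yields the explicit bound $\limsup_t\|u(t)\|_{L^\infty}\le\max\{\sqrt{3/\pi}\,R,\,C(\delta)R\}$, the two constants corresponding to the Agmon/Sobolev step and the nonlinear maximum-principle step. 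Combining this with $\limsup_t\|u\|_{L^2}\le r_2$ in the interpolation inequality gives the claim. The main obstacle in this part is precisely the passage from the time-averaged $H^{(1+\delta)/2}$-bound to an $L^\infty$-bound that holds uniformly in $t$.
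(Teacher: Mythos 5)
Your overall strategy coincides with the paper's: Lemma~\ref{aux1} with a gauge function built from $b^{x_0}_M$ for the $L^2$ absorbing set, a nonlocal maximum principle with the pointwise lower bound $\Lambda^{1+\delta}u(x_t)\gtrsim \|u\|_{L^\infty}^{3+2\delta}R^{-2(1+\delta)}$ for the $L^\infty$ radius, and interpolation for $2<p<\infty$. However, there is a genuine gap in your $L^2$ step, and it is exactly the term you flag as ``the technical crux'': the contribution $-A\dot x_0(t)\langle u,b^{x_0(t)}_M\rangle$ coming from differentiating the moving gauge $\Phi^{x_0(t)}$ in time. Your smallness estimate $\langle u,b^{x_0}_M\rangle=O(M^{-\delta/2})\|\Lambda^{(1+\delta)/2}u\|_{L^2}$ is correct, but it is multiplied by $\dot x_0(t)$, and no Lipschitz (or locally $W^{1,2}$) selection of a zero of $u(\cdot,t)$ with controlled $\dot x_0$ exists in general: zeros of a mean-zero function can be degenerate, and branches of the zero set can merge or terminate, so any continuous selection can have unbounded speed. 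Passing to integrated form and the uniform Gronwall lemma does not repair this, since the time integral of the offending term involves the total variation of $x_0(\cdot)$, which is equally uncontrolled. As written, the differential inequality $\frac{d}{dt}\mathcal E\le -c\mathcal E+C$ is therefore not established.

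The paper circumvents this issue entirely rather than confronting it. First, for odd initial data the zero is pinned at $x_0\equiv 0$ by symmetry, the gauge $s$ in \eqref{sdefi} is time-independent, and the problematic term never appears. For general mean-zero data (Step~4 of the paper's proof), one introduces the compact family $\mathcal S$ of all translates of $s$ and, for each fixed time $t$, freezes the gauge at $\tilde s(t)=s(\cdot+x_0(t))$: one differentiates $\|u(t+t')-\tilde s(t)\|_{L^2}^2$ in $t'$ at $t'=0$ only, where $\tilde s(t)$ is a constant function of $t'$. This yields that the distance to $\mathcal S$ is decreasing whenever it is large, hence bounded, without ever differentiating $x_0(t)$. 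You should either restrict to odd data and then run this distance-to-a-set argument, or find some other device; your $L^\infty$ and $L^p$ steps (the second of your two proposed routes for $L^\infty$ is the paper's, and gives the stated constants $\sqrt{3/\pi}\,R$ and $C(\delta)R$; the first route via the time-averaged $H^{(1+\delta)/2}$ bound would also give an absorbing ball but with a cruder radius) are otherwise sound once the $L^2$ absorbing set is in place.
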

\begin{proof}
We start by assuming that the initial data is odd.

\textbf{Step 1: Absorbing set in $L^2$} Let $s$ be a smooth, periodic function, which we will choose later. We compute that
\begin{align*}
\frac{1}{2}\frac{d}{dt}\|u(t)-s\|_{L^2(\TT)}^2&= \|\Lambda^{\gamma/2}u\|_{L^2(\TT)}^2-\epsilon\|\Lambda^{(1+\delta)/2}u\|_{L^2(\TT)}^2-\int_\TT\pax s\frac{u^2}{2}dx\\
&-\int_\TT\Lambda^{(1+\delta)/2}u\left(\epsilon
\Lambda^{(1+\delta)/2}s+\Lambda^{\gamma-(1+\delta)/2}s\right)dx.
\end{align*}
Using the inequality
$$
2|\xi|^\gamma\leq \frac{\epsilon}{3}|\xi|^{1+\delta}+\left(\frac{6\gamma}{(1+\delta)\epsilon}\right)^{\frac{1}{1+\delta-\gamma}},\text{ for all }\xi\in\RR
$$
and the Plancherel theorem, we get
\begin{align*}
\frac{1}{2}\frac{d}{dt}\|u(t)-s\|_{L^2(\TT)}^2&\leq-\|\Lambda^{\gamma/2}u\|_{L^2(\TT)}^2-\frac{2\epsilon}{3}\|\Lambda^{(1+\delta)/2}u\|_{L^2(\TT)}^2-\|u\|_{L^2(\TT)}^2\\
&\quad+\int_\TT\left(\lambda-\frac{\pax s}{2}\right)u^2dx
\\&\quad
+\int_\TT\Lambda^{(1+\delta)/2}u\left(-\epsilon
\Lambda^{(1+\delta)/2}s+\Lambda^{\gamma-(1+\delta)/2}s\right)dx,
\end{align*}
where
\begin{equation}\label{lambdadefi}
\lambda=\left(\frac{6\gamma}{(1+\delta)\epsilon}\right)^{\frac{1}{1+\delta-\gamma}}+1.
\end{equation}
Then, using the Young and Cauchy-Schwarz inequalities, we obtain
\begin{align*}
\frac{1}{2}\frac{d}{dt}\|u(t)-s\|_{L^2(\TT)}^2&\leq-\|\Lambda^{\gamma/2}u\|_{L^2(\TT)}^2-\|u\|_{L^2(\TT)}^2-\frac{\epsilon}{3}\|\Lambda^{(1+\delta)/2}u\|_{L^2(\TT)}^2\\
&\quad+\int_\TT\left(\lambda-\frac{\pax s}{2}\right)u^2dx\\
&\quad+\frac{3}{\epsilon}\int_\TT\left(\left(-\epsilon
\Lambda^{(1+\delta)/2}+\Lambda^{\gamma-(1+\delta)/2}\right)s\right)^2dx.
\end{align*}

Since the odd symmetry is preserved by \eqref{KS} and $u_0$ is odd, we have $u(0,t)=0$.
For $M\in \NN$,
we choose $s$ such that
\begin{equation}\label{sdefi}
\pax s(x)= - 2\lambda \sum_{0<|\xi|\leq M}e^{-i\xi x} = -2\lambda \left[b^{0}_M(x) - 1\right].
\end{equation}
Then from the preceding inequality and Lemma \ref{aux1}, we get
\begin{align*}
&\frac{1}{2}\frac{d}{dt}\|u(t)-s\|_{L^2(\TT)}^2\\
&\qquad\leq-\|\Lambda^{\gamma/2}u\|_{L^2(\TT)}^2-\|u\|_{L^2(\TT)}^2-\frac{\epsilon}{3}\|\Lambda^{(1+\delta)/2}u\|_{L^2(\TT)}^2\\
&\qquad\quad+\int_\TT b^{\lambda,0}_M u^2dx+\frac{3}{\epsilon}\int_\TT\left(\left(-\epsilon
\Lambda^{(1+\delta)/2}+\Lambda^{\gamma-(1+\delta)/2}\right)s\right)^2dx\\
&\qquad\leq-\|\Lambda^{\gamma/2}u\|_{L^2(\TT)}^2-\|u\|_{L^2(\TT)}^2-\frac{\epsilon}{3}\|\Lambda^{(1+\delta)/2}u\|_{L^2(\TT)}^2\\
&\qquad\quad+c_1\lambda\|\Lambda^{(1+\delta)/2}u\|_{L^2(\TT)}^2\left(\frac{1}{M^{1+\delta}}+\frac{1}{\delta M^{\delta}}\right)^{1/2}+\frac{6}{\epsilon}\|\Lambda s\|_{L^2(\TT)}^2.
\end{align*}
We take $M=M(\epsilon,\delta,\gamma)$ such that
$$
c_1\left(\left(\frac{6\gamma}{(1+\delta)\epsilon}\right)^{\frac{1}{1+\delta-\gamma}}+1\right)\left(\frac{1}{M^{1+\delta}}+\frac{1}{\delta M^{\delta}}\right)^{1/2}\leq\frac{\epsilon}{3},
$$
and we obtain
$$
\frac{1}{2}\frac{d}{dt}\|u(t)-s\|_{L^2(\TT)}^2\leq-2\|u(t)-s\|_{L^2(\TT)}^2+2\|s\|_{L^2(\TT)}^2+\frac{6}{\epsilon}\|\Lambda s\|_{L^2(\TT)}^2.
$$
Using Gronwall inequality, we conclude that
\begin{align*}
\|u(t)-s\|_{L^2(\TT)}^2&\leq\left(\|u_0-s\|_{L^2(\TT)}^2+\|s\|_{L^2(\TT)}^2+\frac{3}{\epsilon}\|\Lambda s\|_{L^2(\TT)}^2\right)e^{-4t}\\
&\quad +\|s\|_{L^2(\TT)}^2+\frac{3}{\epsilon}\|\Lambda s\|_{L^2(\TT)}^2.
\end{align*}
The existence of an absorbing set in $L^2$ is now straightforward. Thus we have the existence of a constant $R=R(\epsilon,\delta,\gamma)$ such that
$$
\|u(t)\|_{L^2(\TT)}\leq R(\epsilon,\delta,\gamma).
$$

\textbf{Step 2: Absorbing set in $L^\infty$} We assume $u(x_t)=\|u(t)\|_{L^\infty(\TT)}$. We take $\nu>0$ a positive number and define
$$
\mathcal{U}_1=\{\eta\in[-\nu,\nu] \;s.t.\; u(x_t)-u(x_t-\eta)>u(x_t)/2 \},
$$
and $\mathcal{U}_2=[-\nu,\nu]-\mathcal{U}_1$. We have
\begin{align*}
R^2(\epsilon,\delta,\gamma)&\geq\|u(t)\|^2_{L^2(\TT)}
\\
&\geq\int_\RR \left(u(x_t-\eta)\right)^2d\eta\\
&\geq\int_{\mathcal{U}_2} \left(u(x_t-\eta)\right)^2d\eta
\\
&\geq \left(\frac{u(x_t)}{2}\right)^2|\mathcal{U}_2|.
\end{align*}
Equivalently,
$$
2\nu-\frac{4R^2}{\|u(t)\|^2_{L^\infty(\TT)}}\leq 2\nu-|\mathcal{U}_2|=|\mathcal{U}_1|.
$$
Using the fact that the initial data has zero mean, we get
\begin{align*}
\Lambda^{1+\delta} u(x_t)&=\sum_{k\in\ZZ}\int_\TT\frac{u(x_t)-u(x_t-\eta)}{|\eta-2k\pi|^{2+\delta}}d\eta\\
&\geq\sum_{|k|>0}\int_{\TT}\frac{u(x_t)-u(x_t-\eta)}{|\eta-2k\pi|^{2+\delta}}d\eta+\int_{\mathcal{U}_1}\frac{u(x_t)-u(x_t-\eta)}{|\eta|^{2+\delta}}d\eta\\
&\geq\sum_{|k|>1}\int_{\TT}\frac{u(x_t)-u(x_t-\eta)}{|2(k-1)\pi|^{2+\delta}}d\eta+\frac{\frac{u(x_t)}{2}}{\nu^2}|\mathcal{U}_1|\\
&\geq \frac{u(x_t)}{\nu^{2+\delta}}\left(\nu-2\left(\frac{R}{u(x_t)}\right)^2\right)+ \frac{2\zeta(2+\delta)u(x_t)}{(2\pi)^{1+\delta}}.
\end{align*}
We define
$$
\nu=3\left(\frac{R}{u(x_t)}\right)^2,
$$
and we obtain
$$
\Lambda^{1+\delta} u(x_t)\geq \frac{\left(u(x_t)\right)^{3+2\delta}}{3^{2+\delta}R^{2(1+\delta)}}+ \frac{2\zeta(2+\delta)u(x_t)}{(2\pi)^{1+\delta}}.
$$
As $\nu\leq\pi$ this choice implies
$$
\sqrt{\frac{3}{\pi}}R\leq u(x_t).
$$
We have
\begin{align*}
\frac{d}{dt}\|u(t)\|_{L^\infty(\TT)}&\leq \Lambda^\gamma u(x_t)-\frac{1}{2}\Lambda^{1+\delta}u(x_t)-\frac{1}{2}\Lambda^{1+\delta}u(x_t)\\
&\leq C(\gamma,\delta)\|u(t)\|_{L^\infty(\TT)}-\frac{1}{2}\left(\frac{\left(u(x_t)\right)^{3+2\delta}}{3^{2+\delta}R^{2(1+\delta)}}+ \frac{2\zeta(2+\delta)u(x_t)}{(2\pi)^{1+\delta}}\right)\\
&\leq C(\gamma,\delta)\|u(t)\|_{L^\infty(\TT)}-\frac{\|u(t)\|_{L^\infty(\TT)}^{3+2\delta}}{2\cdot 3^{2+\delta}R^{2(1+\delta)}}.
\end{align*}
On the other hand, if $\|u(t)\|_{L^\infty(\TT)}=-\min_x u(x,t)$, we define
$$
\mathcal{U}_1=\{\eta\in[-\nu,\nu] \;s.t.\; -u(x_t)+u(x_t-\eta)>-u(x_t)/2 \},
$$
and $\mathcal{U}_2=[-\nu,\nu]-\mathcal{U}_1$. We get
\begin{multline*}
\frac{d}{dt}\|u(t)\|_{L^\infty(\TT)}= -\Lambda^\gamma u(x_t)+\Lambda^{1+\delta}u(x_t)=\Lambda^\gamma (-u(x_t))-\Lambda^{1+\delta}(-u(x_t))\\
\leq C(\gamma,\delta)\|u(t)\|_{L^\infty(\TT)}-\frac{\|u(t)\|_{L^\infty(\TT)}^{3+2\delta}}{2\cdot 3^{2+\delta}R^{2(1+\delta)}}.
\end{multline*}
Collecting these inequalities, we obtain the existence of an absorbing ball in $L^\infty$ with radius
$$
r_\infty=\max\left\{\sqrt{\frac{3}{\pi}}R,C\left(\gamma,\delta\right)R\right\}.
$$

\textbf{Step 3: Absorbing set in $L^p$} For the case $2<p<\infty$, we use interpolation. We get
\begin{align*}
\|u(t)\|_{L^p(\TT)}&\leq \|u(t)\|^{2/p}_{L^2(\TT)}\|u(t)\|^{1-2/p}_{L^\infty(\TT)}\\
&\leq R^{2/p}\max\left\{\sqrt{\frac{3}{\pi}}R,C\left(\delta\right)R,\|u_0\|_{L^\infty(\TT)}\right\}^{1-2/p}.
\end{align*}
The radius for this case can be obtained in a similar way.

\textbf{Step 4: Initial data without odd symmetry} Following the same ideas as in \cite{goodmankuramoto} (see also \cite{colletkuramoto2, frankelroytburd}), we introduce the set of translations of the function $s$ defined in \eqref{sdefi}:
\begin{align*}
\mathcal{S}=\{\tilde{s} : \mbox{$\tilde{s}(x)=s(x+\chi)$ with $|\chi|\leq\pi$}\}.
\end{align*}
Since the function $u_0$ has zero mean, the solution $u(t)$ has zero mean for all time, so there exists at least one point $x_0(t)$ such that $u(x_0(t),t)=0$. Then, for any particular time $t$, we consider, as in the step 1 above, the function $b^{x_0(t)}_M(x)$
defined in Lemma \ref{aux1} where $\lambda$ was defined in \eqref{lambdadefi}, and let
$$
\pax \tilde{s}(x,t)=-2\lambda \sum_{0<|\xi|\leq M}e^{-i\xi (x-x_0(t))} = -2\lambda \left[b^{x_0(t)}_M(x)-1\right]
$$
Notice that $\tilde{s}(x)=s(x+x_0(t))$, with $s$ defined in \eqref{sdefi}. As before, we obtain
\begin{multline*}
\frac{d}{dt'}\|u(t+t')-\tilde{s}(t)\|_{L^2(\TT)}^2\\
\leq-4\|u(t+t')-\tilde{s}(t)\|_{L^2(\TT)}^2+4\|s(t)\|_{L^2(\TT)}^2+\frac{12}{\epsilon}\|\Lambda s(t)\|_{L^2(\TT)}^2.
\end{multline*}
If follows that
$$
\frac{d}{dt'}\|u(t+t')-\tilde{s}(t)\|_{L^2(\TT)}^2\bigg|_{t'=0}\leq 0
$$
if
$$
d(u(t),\tilde{s}(t))=\|u(t)-\tilde{s}(t)\|_{L^2(\TT)} \gg 1.
$$
As a consequence, we find that
$$
d(u(t),\tilde{s}(t))=\|u(t)-\tilde{s}(t)\|_{L^2(\TT)}
$$
is a bounded function of time. Since $d(u(t),\mathcal{S})\leq d(u(t),\tilde{s}(t))$,
this completes the proof.
\end{proof}

\begin{corol}Let $u_0\in H^\alpha(\TT), \alpha>1$ be the mean-zero initial data for the problem \eqref{KS} with $\epsilon\geq1>\delta$ in the periodic case. Then we have
$$
\|u(t)\|_{L^p(\TT)}\leq \|u_0\|^{2/p}_{L^2(\TT)}\max\left\{\sqrt{\frac{3}{\pi}}\|u_0\|_{L^2(\TT)},C\left(\delta\right)\|u_0\|_{L^2(\TT)},\|u_0\|_{L^\infty(\TT)}\right\}^{1-2/p},
$$
and
$$
\limsup_{t\rightarrow\infty}\|u(t)\|_{L^p(\TT)}=\|u_0\|^{2/p}_{L^2(\TT)}\left(\max\left\{\sqrt{\frac{3}{\pi}}\|u_0\|_{L^2(\TT)},C\left(\delta\right)\|u_0\|_{L^2(\TT)}\right\}\right)^{1-2/p}.
$$
\end{corol}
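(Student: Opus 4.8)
The plan is to exploit the hypothesis $\epsilon\ge1$ together with the spectral gap for mean-zero functions on $\TT$ in order to show that $t\mapsto\|u(t)\|_{L^2(\TT)}$ is non-increasing; this allows us to repeat the argument of Theorem~\ref{lowerset} with the absorbing radius $R(\epsilon,\delta,\gamma)$ replaced everywhere by $\|u_0\|_{L^2(\TT)}$.

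First I would recall the $L^2$ energy identity already derived in Step~1 of the proof of Theorem~\ref{teotoro},
$$
\frac{1}{2}\frac{d}{dt}\|u(t)\|_{L^2(\TT)}^2=\|\Lambda^{\gamma/2}u\|_{L^2(\TT)}^2-\epsilon\|\Lambda^{(1+\delta)/2}u\|_{L^2(\TT)}^2 .
$$
Since $u_0$ has zero mean, so does $u(t)$ for every $t$, hence $\widehat{u}(0,t)=0$ and every active Fourier mode satisfies $|\xi|\ge1$. Because $0\le\gamma<1+\delta$, Plancherel's theorem gives $\|\Lambda^{\gamma/2}u\|_{L^2(\TT)}^2\le\|\Lambda^{(1+\delta)/2}u\|_{L^2(\TT)}^2$, and therefore $\epsilon\ge1$ yields
$$
\frac{d}{dt}\|u(t)\|_{L^2(\TT)}^2\le2(1-\epsilon)\|\Lambda^{(1+\delta)/2}u\|_{L^2(\TT)}^2\le0 .
$$
Thus $\|u(t)\|_{L^2(\TT)}\le\|u_0\|_{L^2(\TT)}$ for all $t\ge0$, and in particular $\limsup_{t\to\infty}\|u(t)\|_{L^2(\TT)}\le\|u_0\|_{L^2(\TT)}$.

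Next I would re-run Step~2 of the proof of Theorem~\ref{lowerset} essentially verbatim, using the monotone bound $\|u(t)\|_{L^2(\TT)}\le\|u_0\|_{L^2(\TT)}$ wherever that proof used $\|u(t)\|_{L^2(\TT)}\le R$. That $L^\infty$ argument relies only on the zero-mean property and on $L^2$ control of $u$ (the oddness enters solely the $L^2$ absorbing-set step, which we have now bypassed), so it applies directly and produces
$$
\frac{d}{dt}\|u(t)\|_{L^\infty(\TT)}\le C(\delta)\|u(t)\|_{L^\infty(\TT)}-\frac{\|u(t)\|_{L^\infty(\TT)}^{3+2\delta}}{2\cdot3^{2+\delta}\|u_0\|_{L^2(\TT)}^{2(1+\delta)}} ,
$$
where $C(\delta)$ can be chosen independently of $\gamma$ and of $\epsilon\ge1$ since $0\le\gamma<1+\delta\le2$. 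The same ODE comparison as in Theorem~\ref{lowerset} then gives $\|u(t)\|_{L^\infty(\TT)}\le\max\{r_\infty,\|u_0\|_{L^\infty(\TT)}\}$ for all $t$ and $\limsup_{t\to\infty}\|u(t)\|_{L^\infty(\TT)}\le r_\infty$, with $r_\infty=\max\{\sqrt{3/\pi}\,\|u_0\|_{L^2(\TT)},\,C(\delta)\|u_0\|_{L^2(\TT)}\}$.

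Finally, interpolation gives $\|u(t)\|_{L^p(\TT)}\le\|u(t)\|_{L^2(\TT)}^{2/p}\|u(t)\|_{L^\infty(\TT)}^{1-2/p}$ for $2<p\le\infty$; inserting the two bounds above yields the claimed inequality for all $t$, and letting $t\to\infty$ gives the stated $\limsup$ bound. I do not expect a real obstacle here: the only substantive point is the sign computation in the second paragraph, where the spectral gap on $\TT$ turns $\epsilon\ge1$ into monotonicity of the $L^2$-norm; the remainder is a bookkeeping recombination of Theorem~\ref{lowerset}, the one mild care being to confirm that the constant in the $L^\infty$ estimate depends only on $\delta$.
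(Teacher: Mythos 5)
Your proposal is correct and is exactly the argument the paper intends: its one-line proof (``the result follows from Poincar\'e's inequality'') is precisely your observation that the spectral gap for mean-zero functions on $\TT$ together with $\epsilon\geq 1$ and $\gamma<1+\delta$ makes $\|u(t)\|_{L^2(\TT)}$ non-increasing, so the $L^\infty$ and interpolation steps of Theorem~\ref{lowerset} can be rerun with the absorbing radius replaced by $\|u_0\|_{L^2(\TT)}$. The only caveat is that the stated $\limsup$ \emph{equality} should be read as the inequality ``$\leq$'' that you actually prove.
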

\begin{proof}
The result follows from Poincar\'e's inequality.
\end{proof}

The existence of an absorbing set in the $L^2$-norm and the regularity results from Section \ref{sec:global} imply
the existence of an absorbing set in higher Sobolev norms. The proof is straightforward, and we just state the result.

\begin{lem}\label{higherset}
Suppose that $\alpha>1$ and $u_0\in H^\alpha(\TT)$ has zero mean. Then for every $0<s\leq \alpha$ the solution $u$ of the initial-value problem \eqref{KSivp} in the periodic case satisfies
$$
\limsup_{t\rightarrow\infty}\|u(t)\|_{H^s}\leq
C\left(s,\epsilon,\delta,\gamma,\|u_0\|_{L^2(\TT)}\right) .
$$
\end{lem}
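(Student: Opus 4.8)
The plan is to promote the $L^2$ absorbing set of Theorem~\ref{lowerset} to an absorbing set in $H^\alpha$. Since every Fourier mode of a mean-zero periodic function satisfies $|\xi|\ge 1$, we have $\|u(t)\|_{H^s}\le C\|\Lambda^s u(t)\|_{L^2}\le C\|\Lambda^\alpha u(t)\|_{L^2}$ for all $0<s\le\alpha$, so it is enough to bound $\limsup_{t\to\infty}\|\Lambda^\alpha u(t)\|_{L^2(\TT)}$. By Theorem~\ref{lowerset} there is a time $t_0$ after which $\|u(t)\|_{L^2(\TT)}\le R$ and, when $0<\delta<1$, also $\|u(t)\|_{L^\infty(\TT)}\le r_\infty$, where $R$ and $r_\infty$ depend on the data only through $\|u_0\|_{L^2}$. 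Since the problem is globally well posed in $H^\alpha$ (Section~\ref{sec:global}), we have $u(t)\in H^\alpha(\TT)$ for all $t$, so the higher-order energy estimates of that section make sense on $[t_0,\infty)$.

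First I would re-derive the $H^\alpha$ energy estimate exactly as in Step~2 of the proof of Theorem~\ref{teotoro} (or of Theorem~\ref{teodelta1} for $\delta=1$), but using the \emph{time-independent} bound $\|u(t)\|_{L^\infty(\TT)}\le r_\infty$ in place of the exponentially growing bound of Lemma~\ref{Linftytoro}. Estimating the nonlinear term $I_1$ by the Kato--Ponce inequality and absorbing half of the diffusion, and treating the linear term $I_2$ on the Fourier side, this yields a constant $C_0=C_0(\epsilon,\delta,\gamma,r_\infty)$ such that, for $t\ge t_0$,
\[
\frac{d}{dt}\|\Lambda^\alpha u\|_{L^2(\TT)}^2\le C_0\,\|\Lambda^\alpha u\|_{L^2(\TT)}^2-\frac{\epsilon}{2}\|\Lambda^{\alpha+\frac{1+\delta}{2}}u\|_{L^2(\TT)}^2.
\]
To turn this into a genuinely dissipative inequality, I would split the Fourier series at a frequency $R_0$ with $C_0 R_0^{-(1+\delta)}\le\epsilon/4$; using $\|\Lambda^\alpha u\|_{L^2}^2\le R_0^{2\alpha}\|u\|_{L^2}^2+R_0^{-(1+\delta)}\|\Lambda^{\alpha+\frac{1+\delta}{2}}u\|_{L^2}^2$ together with the absorbing bound $\|u(t)\|_{L^2}\le R$ and the trivial inequality $\|\Lambda^{\alpha+\frac{1+\delta}{2}}u\|_{L^2}\ge\|\Lambda^\alpha u\|_{L^2}$, one obtains, for $t\ge t_0$,
\[
\frac{d}{dt}\|\Lambda^\alpha u\|_{L^2(\TT)}^2\le K(\alpha,\epsilon,\delta,\gamma,R)-\frac{\epsilon}{4}\|\Lambda^\alpha u\|_{L^2(\TT)}^2,
\]
whence $\limsup_{t\to\infty}\|\Lambda^\alpha u(t)\|_{L^2(\TT)}^2\le 4K/\epsilon$; together with the first paragraph this proves the lemma (the dependence on $\|u_0\|_{L^2}$ enters only through $R$, and since in fact $\limsup_{t\to\infty}\|u(t)\|_{L^2}\le r_2$ independently of the data one could take the constant to depend only on $s,\epsilon,\delta,\gamma$).

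The one point needing extra care is $\delta=1$, where Theorem~\ref{lowerset} provides only the $L^2$ absorbing set because its $L^\infty$ estimate used the kernel representation of $\Lambda^{1+\delta}$, which degenerates at $\delta=1$. There I would insert a preliminary bootstrap $L^2\to H^1$: from the $H^1$ energy identity of Theorem~\ref{teodelta1}, Agmon's inequality in one dimension gives $\|\pax u\|_{L^\infty}\lesssim\|\pax u\|_{L^2}^{1/2}\|\pax^2 u\|_{L^2}^{1/2}$, so the cubic term $\tfrac12\int(\pax u)^3$ is dominated by $\tfrac{\epsilon}{2}\|\pax^2 u\|_{L^2}^2$ plus a super-quadratic power of $\|\pax u\|_{L^2}^2$; integrating the $L^2$ estimate over unit time intervals gives a uniform bound on $\int_t^{t+1}\|\pax u(s)\|_{L^2}^2\,ds$, and the uniform Gronwall lemma (see \cite{temambook}) then produces an $H^1$ absorbing set. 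The embedding $H^1(\TT)\hookrightarrow L^\infty(\TT)$ now supplies the missing $L^\infty$ absorbing bound, after which the argument of the second paragraph applies verbatim. I expect this $\delta=1$ step --- in particular checking the hypotheses of the uniform Gronwall lemma in the presence of the super-quadratic nonlinearity --- to be the main (though still routine) obstacle; everything else reuses the machinery already developed in Section~\ref{sec:global}.
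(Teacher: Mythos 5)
Your argument is correct and complete, but note that the paper gives no proof of this lemma at all (it declares the result ``straightforward'' from the $L^2$ absorbing set and the regularity estimates of Section~\ref{sec:global}), so the comparison is with the proof the authors evidently have in mind rather than one they wrote down. The presence of the uniform Gronwall lemma (Lemma~\ref{UGL}) in the appendix --- used nowhere else in the paper --- strongly suggests the intended route is the classical bootstrap: the $L^2$ energy inequality plus the $L^2$ absorbing ball give uniform bounds on $\int_t^{t+1}\|\Lambda^{(1+\delta)/2}u\|_{L^2}^2\,ds$, and one climbs $L^2\to H^{(1+\delta)/2}\to\cdots\to H^\alpha$ by repeated application of Lemma~\ref{UGL} to the successive energy inequalities. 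You instead combine the $L^\infty$ absorbing set of Theorem~\ref{lowerset} with a frequency splitting of $\|\Lambda^\alpha u\|_{L^2}^2$ to convert \eqref{Halpha} into a genuinely dissipative ODE, $\frac{d}{dt}\|\Lambda^\alpha u\|_{L^2}^2\le K-\frac{\epsilon}{4}\|\Lambda^\alpha u\|_{L^2}^2$; this is equally valid, is arguably cleaner, and yields an explicit absorption rate and radius. The trade-off is that your route genuinely needs the time-uniform $L^\infty$ bound, which Theorem~\ref{lowerset} only supplies for $0<\delta<1$; the uniform-Gronwall route needs only time-averaged quantities and so covers $\delta=1$ without further work. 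You correctly identify this as the one gap and your proposed patch --- an $L^2\to H^1$ bootstrap via the uniform Gronwall lemma (writing the supercritical term $C\|\pax u\|_{L^2}^{10/3}$ as $\bigl(C\|\pax u\|_{L^2}^{4/3}\bigr)\|\pax u\|_{L^2}^{2}$ so that the coefficient has bounded time-averages), followed by $H^1\hookrightarrow L^\infty$ --- closes it. The only caveats, which the paper itself glosses over, are that for $1<\alpha<2+\delta$ the differential inequalities should be run on the regularized solutions and the bounds passed to the limit by lower semicontinuity, and that the final constant inherits a dependence on $\alpha$ (harmless, since $\|u\|_{H^s}\le\|u\|_{H^\alpha}$ for mean-zero data).
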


\section{The attractor}
\label{sec:attractor}

In this section we prove the existence of an attractor for spatially periodic solutions ($\Omega = \TT$) and derive some of its properties.
\subsection{Existence}
We denote the solution operators for \eqref{KSivp} by $S(t)$, where $S(t)u_0=u(x,t)$.
The compactness of a nonlinear semigroup, or semiflow, is defined as follows \cite{temambook}.

\begin{defi}
The solution operator $S(t)u_0=u(t,x)$ defines a compact semiflow in $H^s$ if, for every $u_0\in H^s$ the following statements hold:
\begin{itemize}
\item $S(0)u_0=u_0$.
\item for all $t,s,u_0$, the semigroup property hold, \emph{i.e.},
$$
S(t+s)u_0=S(t)S(s)u_0=S(s)S(t)u_0.
$$
\item For every $t>0$, $S(t)$ is continuous (as an operator from $H^s$ to $H^s$).
\item There exists $t_1>0$ such that $S(t_1)$ is a compact operator, \emph{i.e.} for every bounded set $B\subset H^s$, $S(t_1)B\subset H^s$ is a compact set.
\end{itemize}
\end{defi}

It is then is straightforward to use our existence results to prove the following lemma.

\begin{lem}\label{aux2}
Let $u_0\in H^\alpha(\TT)$ for $\alpha\geq 3$ be the initial data for the problem \eqref{KS}. Then $S(t)u_0=u(\cdot, t)$ defines a compact semiflow in $H^\alpha(\TT)$. Moreover $S(t)u_0$ is a continuous map from $[0,T]$ to $H^\alpha(\TT)$ for every initial data $u_0$, {i.e.}, $S(\cdot) u_0 \in C([0,T], H^\alpha)$.
\end{lem}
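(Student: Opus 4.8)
The plan is to verify, one by one, the four conditions in the definition of a compact semiflow on $H^\alpha(\TT)$, using only the global well-posedness of Section~\ref{sec:global} and the smoothing estimates of Section~\ref{sec:analyticity}. Since $\alpha\ge 3$ implies $\alpha\ge 2+\delta$ for every admissible $\delta$, Theorems~\ref{teotoro} and~\ref{teodelta1} give, for each $u_0\in H^\alpha(\TT)$ and each $T>0$, a unique classical solution $u\in C([0,T],H^\alpha(\TT))$ of \eqref{KSivp}, so $S(t)u_0:=u(\cdot,t)$ is well defined, $S(\cdot)u_0\in C([0,T],H^\alpha)$ (which already gives the final assertion of the lemma), and $S(0)u_0=u_0$. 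The semigroup identity $S(t+s)u_0=S(t)S(s)u_0=S(s)S(t)u_0$ follows from uniqueness, since $\tau\mapsto u(\cdot,\tau+s)$ and $\tau\mapsto S(\tau)\big(u(\cdot,s)\big)$ solve the same initial value problem with data $u(\cdot,s)$.

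The quantitative engine for the remaining two conditions is the following smoothing property: for every fixed $t>0$, $S(t)$ maps bounded subsets of $H^\alpha(\TT)$ into bounded subsets of $H^{\alpha+\sigma}(\TT)$ with $\sigma=(1+\delta)/2>0$. I would deduce it from the gain-of-regularity estimate $u\in L^2\big((0,t),H^{\alpha+\sigma}(\TT)\big)$, whose norm is controlled by $\|u_0\|_{H^\alpha}$ and $\|u_0\|_{L^\infty}\le C\|u_0\|_{H^\alpha}$ (Theorems~\ref{teotoro}, \ref{teodelta1}): since $\int_{t/2}^{t}\|u(s)\|_{H^{\alpha+\sigma}}^2\,ds$ is bounded by the same data, there is $t^\ast\in(t/2,t)$ with $\|u(t^\ast)\|_{H^{\alpha+\sigma}}$ bounded, and then the classical $H^{\alpha+\sigma}$ \emph{a priori} estimate (of the type \eqref{Halpha}), together with the $L^\infty$ bound of Lemma~\ref{Linftytoro}, propagates this bound from $t^\ast$ to $t$. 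Alternatively, the instant analyticity of Theorem~\ref{SE} furnishes the same---indeed stronger---smoothing.

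Granting this, continuity of $S(t):H^\alpha\to H^\alpha$ for each fixed $t>0$ follows by interpolation. If $u_0^n\to u_0$ in $H^\alpha$ then $\{u_0^n\}$ is $H^\alpha$-bounded, so $\{S(t)u_0^n\}$ is bounded in $H^{\alpha+\sigma}$; meanwhile the $L^2$ energy estimate for the difference of two solutions---exactly as in Step~6 of the proof of Theorem~\ref{teotoro}---gives $\|S(t)u_0^n-S(t)u_0\|_{L^2}\le e^{Ct}\|u_0^n-u_0\|_{L^2}\to 0$, with $C$ controlled by the $C([0,t],H^{3/2+\varepsilon})$-norms of the two solutions, which are uniformly bounded because $\{u_0^n\}$ is bounded in $H^\alpha\subset H^{3/2+\varepsilon}$; and the interpolation inequality $\|w\|_{H^\alpha}\le C\|w\|_{L^2}^{\sigma/(\alpha+\sigma)}\|w\|_{H^{\alpha+\sigma}}^{\alpha/(\alpha+\sigma)}$ then forces $S(t)u_0^n\to S(t)u_0$ in $H^\alpha$. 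For the compactness, fix any $t_1>0$: if $B\subset H^\alpha(\TT)$ is bounded, then $S(t_1)B$ is bounded in $H^{\alpha+\sigma}(\TT)$ by the smoothing property, and since the embedding $H^{\alpha+\sigma}(\TT)\hookrightarrow H^\alpha(\TT)$ is compact, $S(t_1)B$ is precompact in $H^\alpha(\TT)$. This checks all four conditions.

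I expect the only genuinely delicate point to be the continuity of the data-to-solution map at the \emph{same} Sobolev regularity $H^\alpha$, rather than merely in a weaker norm $H^s$, $s<\alpha$: the bare energy method gives only the latter, and it is the smoothing effect of \eqref{KS}---which instantaneously promotes $H^\alpha$ data to $H^{\alpha+\sigma}$---that closes the gap via the interpolation step above; without it one would have to resort to a Bona--Smith type mollification argument. Everything else is routine, being an immediate consequence of the existence and uniqueness theorems of Section~\ref{sec:global}.
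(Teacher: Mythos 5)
Your proof is correct and takes exactly the route the paper intends: the paper in fact omits the proof entirely, asserting only that it is ``straightforward to use our existence results,'' and your write-up supplies precisely the missing details (the semigroup property from uniqueness, continuity of $S(t)$ on $H^\alpha$ via the $L^2$-stability estimate of Step~6 combined with the $H^{\alpha+(1+\delta)/2}$ smoothing bound and interpolation, and compactness via that same smoothing gain together with the compact embedding $H^{\alpha+(1+\delta)/2}(\TT)\hookrightarrow H^\alpha(\TT)$). Your closing remark correctly identifies the one non-routine point, namely that continuity at the \emph{same} regularity $H^\alpha$ requires the parabolic gain of derivatives rather than the bare energy method.
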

Now we can apply Theorem 1.1 in \cite{temambook} to obtain the existence of the attractor

\begin{teo}\label{attractor}
In the spatially periodic case with $\Omega = \TT$,
equation \eqref{KS}
has a maximal, connected, compact attractor in the space $H^{\alpha}(\TT)$
for every $\alpha\geq 3$.
\end{teo}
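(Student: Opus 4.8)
The plan is to verify the hypotheses of the abstract global-attractor theorem (Theorem~1.1 in \cite{temambook}) for the semiflow $S(t)$ on the phase space $H^\alpha(\TT)$, $\alpha\geq 3$. Three ingredients are needed: (i) $S(t)$ is a well-defined continuous semiflow; (ii) there is a bounded absorbing set in $H^\alpha(\TT)$, uniform in the initial data; and (iii) $S(t)$ is eventually compact. Once these are in place, the attractor is obtained as an $\omega$-limit set of the absorbing ball, and its maximality and connectedness follow from general considerations.

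Ingredients (i) and (iii) are exactly the content of Lemma~\ref{aux2}: global existence and uniqueness of classical solutions (Theorem~\ref{teotoro}) show that $S(t)$ is defined for all $t\geq 0$ on $H^\alpha(\TT)$, satisfies the semigroup property, and is continuous from $H^\alpha$ to $H^\alpha$; moreover, the parabolic smoothing established in Section~\ref{sec:global} places $S(t)u_0$ in $H^{\alpha+\frac{1+\delta}{2}}(\TT)$ for every $t>0$, and since this space embeds compactly into $H^\alpha(\TT)$, the operator $S(t_1)$ is compact for any $t_1>0$. For ingredient (ii), I would combine the $L^2$ absorbing-set estimate of Theorem~\ref{lowerset}, which provides a radius $r_2=r_2(\epsilon,\delta,\gamma)$ \emph{independent of the initial data}, with the higher-regularity absorbing bound of Lemma~\ref{higherset}. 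Given any bounded set $B\subset H^\alpha(\TT)$ of mean-zero functions (the mean is conserved, so this is harmless), Theorem~\ref{lowerset} yields a time $t_0(B)$ with $\|S(t)u_0\|_{L^2(\TT)}\leq 2r_2$ for all $t\geq t_0(B)$ and all $u_0\in B$; feeding this bound into Lemma~\ref{higherset} with $s=\alpha$, whose constant then depends only on $\epsilon,\delta,\gamma$ and $r_2$, produces a ball $B_0=B_0(\epsilon,\delta,\gamma)\subset H^\alpha(\TT)$ that absorbs every bounded set.

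Granting (i)--(iii), I would set $\mathcal{A}=\omega(B_0)=\bigcap_{t\geq 0}\overline{\bigcup_{s\geq t}S(s)B_0}$. By the standard argument (compactness of $S(t_1)$ gives relative compactness of the orbit of $B_0$, whence $\mathcal{A}$ is nonempty, compact, and fully invariant, and it attracts every bounded set since $B_0$ does), $\mathcal{A}$ is the global attractor; it is maximal because it contains every bounded invariant set and every bounded orbit defined for all $t\in\RR$. Connectedness follows by choosing $B_0$ to be a ball, hence connected: since $S(s)$ is continuous and $H^\alpha(\TT)$ is a connected Banach space, each set $\overline{\bigcup_{s\geq t}S(s)B_0}$ is connected and compact, and $\mathcal{A}$ is the intersection of this nested decreasing family of connected compact sets, hence connected.

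I do not expect a serious obstacle here, since the analytic work has been carried out in the preceding sections; the theorem is essentially an assembly of Theorems~\ref{teotoro} and \ref{lowerset}, Lemmas~\ref{higherset} and \ref{aux2}, and the abstract machinery of \cite{temambook}. The one point that genuinely requires care is upgrading the absorbing set from $L^2$ to $H^\alpha$ \emph{uniformly in the initial data}: this is precisely where the initial-data independence of the radius $r_2$ in Theorem~\ref{lowerset} is indispensable, because the constant in Lemma~\ref{higherset} a priori depends on $\|u_0\|_{L^2(\TT)}$ and can only be rendered universal after the solution has entered the $L^2$ absorbing ball.
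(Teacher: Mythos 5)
Your proposal is correct and follows essentially the same route as the paper: the paper's proof of Theorem~\ref{attractor} is a two-line appeal to Theorem~1.1 of \cite{temambook}, citing Lemma~\ref{aux2} for the continuous compact semiflow and Lemma~\ref{higherset} (built on the initial-data-independent $L^2$ radius of Theorem~\ref{lowerset}) for the absorbing set in $H^\alpha(\TT)$. You have merely unpacked the abstract machinery (construction of $\omega(B_0)$, maximality, connectedness) that the paper leaves inside the cited theorem, and your remark about first entering the $L^2$ absorbing ball before invoking Lemma~\ref{higherset} is exactly the right way to make the $H^\alpha$ absorbing radius universal.
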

\begin{proof}
The result follows from Lemma \ref{higherset}, where the existence of an absorbing set is proved, and Lemma \ref{aux2}, where the properties of the semigroup are proved.
\end{proof}

\subsection{Number of wild oscillations}
\label{sec:wild}
In this section we obtain a bound for the number of wild oscillations that a solution $u$ can develop. This bound is similar to the bound in \cite{AnalyticityKuramotoGrujic} for the standard KS equation (see also \cite{kukavicaKS}), and splits $\TT$ into a set $I_M$ where $\partial_x u$ is uniformly bounded and a set $R_M$ where $\partial_x u$ may be large but $u$ cannot have too many critical points.   However, our bound is valid for arbitrary initial data while the bound in \cite{AnalyticityKuramotoGrujic} only works for initial data in a neighborhood of a stationary solution.

\begin{teo}
\label{th:wild_oscillations}
Let $u$ be the solution of \eqref{KSivp} for initial data $u_0\in H^3(\TT)$ and define $T > 0$ as in \eqref{time}, \eqref{defk}. Then for every $M > 1$, there exist $\tau_M > 0$ and $I_M, R_M \subset \TT$, where $I_M$ a union of at most $[4\pi/\tau_M]$ open intervals, such that $\TT=I_M\cup R_M$ and the following estimates hold for $T/M<t<T$:
\begin{align*}
|\pax u(x,t)| &\leq \frac{\sqrt{2}\|u_0\|_{L^\infty(\TT)}}{M}\qquad \mbox{ for all $x\in I_M$},
\\
\card\{x \in R_M : \pax u(x,t)=0\} &\leq
\frac{4\pi}{\log 2}\frac{\log\left({M}/{\tau_M}\right)}{\tau_M}.
\end{align*}
An explicit choice for $\tau_M$ is
$$
\tau_M=\frac{1}{M}\left[\frac{\log\left(\mathcal{E}/{c}+1\right)}{3\mathcal{E}}\right],
$$
where $\mathcal{E}$ is given by \eqref{defE}.
\end{teo}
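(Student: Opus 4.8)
The plan is to feed the instant-analyticity estimate of Theorem~\ref{SE} into two classical complex-analytic facts: Cauchy's estimate, which converts a bound on $u$ into a bound on $\pax u$ on a smaller disk, and Jensen's formula, which bounds the number of zeros of a bounded holomorphic function by the ratio of its sup-norm to the modulus of one of its values.

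First I would invoke Theorem~\ref{SE} in its growing-strip form with the explicit choices $\lambda=\sqrt2\|u_0\|_{L^\infty}$ and $k$ as in \eqref{defk}, for which the existence time is exactly the $T$ of \eqref{time}. This gives, for every $0<t<T$, a holomorphic extension of $u(\cdot,t)$ to the strip $\BB_k(t)$ of half-width $kt$, and the finiteness of the energy $\|u\|_{\BB_k}$ on $[0,T)$ (the $d^\lambda[u]$ term) forces $\sup_{\BB_k(t)}|u|<\lambda$. Since $\tau_M=kT/M$ by \eqref{width}, the half-width satisfies $kt>\tau_M$ for all $t\in(T/M,T)$ (with equality only in the limit $t\downarrow T/M$), so a Cauchy estimate on disks of radius comparable to $\tau_M$ centred at points of $\TT$ yields $|\pax u(z,t)|\le 2\lambda/\tau_M$ for $z$ in a slightly narrower strip around $\TT$. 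These are the only inputs from the previous sections.

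Next, fix $t\in(T/M,T)$, cover $\TT$ by $N:=[4\pi/\tau_M]$ consecutive open arcs of length comparable to $\tau_M$, and place an arc in $I_M$ if $|\pax u(\cdot,t)|\le\lambda/M$ throughout it and in $R_M$ otherwise. Then $\TT=I_M\cup R_M$, the set $I_M$ is a union of at most $[4\pi/\tau_M]$ arcs, and the first estimate holds by construction; if $I_M=\varnothing$ it is vacuous and if $R_M=\varnothing$ the second estimate is trivial. For the second estimate it remains to count the zeros of the holomorphic function $v:=\pax u(\cdot,t)$ inside each arc $J$ making up $R_M$. On such an arc pick $x_*$ with $|v(x_*)|>\lambda/M$, so $v(x_*)\ne0$; the function $v$ is holomorphic on a disk $D$ centred at $x_*$, contained in $\BB_k(t)$, whose outer radius is (at least) twice an inner radius that still engulfs $J$, and on which $\|v\|_{L^\infty(D)}\le 2\lambda/\tau_M$ by the previous step. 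Jensen's formula then bounds the number of zeros of $v$ in $J$ by
\[
\frac{1}{\log2}\,\log\!\left(\frac{\|v\|_{L^\infty(D)}}{|v(x_*)|}\right)\le\frac{1}{\log2}\,\log\!\left(\frac{2M}{\tau_M}\right),
\]
and summing over the at most $[4\pi/\tau_M]$ arcs in $R_M$ produces the bound
$\card\{x\in R_M:\pax u(x,t)=0\}\le \tfrac{4\pi}{\log2}\,\tfrac{\log(M/\tau_M)}{\tau_M}$, up to harmless adjustment of the numerical constants. The closed form for $\tau_M$, and hence for the right-hand sides, is then just a substitution using \eqref{width}--\eqref{defE}.

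The main obstacle is not analytic — once Theorem~\ref{SE} is available the argument is a clean application of Cauchy plus Jensen — but geometric: one must calibrate the arc lengths and the two Jensen radii so that every disk used stays inside the \emph{time-dependent} strip $\BB_k(t)$, whose guaranteed half-width is only $\tau_M$ and degenerates as $t\downarrow T/M$, while still producing the universal factor $\log2$ in the denominator and a ratio $\|\pax u\|_{L^\infty}/|\pax u(x_*)|$ of size $O(M/\tau_M)$. Carrying out this bookkeeping carefully so as to land exactly on the constant $4\pi/\log2$ (including the edge case where $x_*$ sits near an endpoint of its arc, which may force one to pass to a slightly enlarged arc or to exploit the larger actual width $kt$ when $t$ is bounded away from $T/M$) is the only real work.
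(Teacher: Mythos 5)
Your proposal is correct and follows essentially the same route as the paper: instant analyticity from Theorem~\ref{SE} with the choices \eqref{defk}, the identification $\tau_M = kT/M$ from \eqref{width}, a Cauchy estimate giving $\|\pax u\|_{L^\infty(\BB_k)}\le \lambda/\tau_M$, and then a covering-plus-Jensen count of the zeros of $\pax u$. The only difference is that the paper outsources that last step to Lemma~\ref{grujic} in the appendix (the Kukavica--Gruji\'c oscillation lemma), which is already calibrated with the constants $[2L/\tau]$ and $\frac{2}{\log 2}\frac{L}{\tau}\log(\cdot)$, so that taking $L=2\pi$, $\tau=\tau_M$, $\mu=\lambda/M$ yields the stated bounds exactly, whereas your hand-rolled arc-and-Jensen argument reproduces the same count only up to the numerical adjustments you acknowledge.
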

\begin{proof}
From Theorem~\ref{SE}, after time $t > 0$ the solution becomes analytic in a complex strip $\BB_k(t)$. In particular, choosing the parameters $k$, $\lambda$ as in \eqref{defk}, we get from \eqref{width} that the width of the strip after time ${T}/{M}$ is at least
$$
\tau_M =\frac{1}{M}\left[\frac{\log\left({\mathcal{E}}/{c}+1\right)}{3\mathcal{E}}\right].
$$
Using Cauchy's integral formula and the definition of $d^\lambda[u]$ in Theorem \ref{SE}, we find that
$$
\|\pax u(t)\|_{L^\infty(\BB_k)}\leq \frac{\|u(t)\|_{L^\infty(\BB_k)}}{\tau_M}\leq \frac{\lambda}{\tau_M},
$$
and an application of Lemma~\ref{grujic} with $\mu = {\lambda}/{M}$ then gives the result.
\end{proof}

Theorem~\ref{th:wild_oscillations} is local in time, but we can apply the result repeatedly to get bounds on the number of oscillations on successive time intervals\[
[{T}/{M},T] \cup [ T + T_1/M, T+T_1] \cup \dots,
\]
where $T_1$ is given by \eqref{time} with $u_0$ replaced by $u(T)$. In view of the uniform $H^3$-bounds on $u(t)$, we can extend the estimates to arbitrarily large times, but there are small gaps between successive time intervals in which the estimates may not apply.

\section{Numerical simulations}
\label{sec:numerics}
In this section, we show some numerical solutions of \eqref{KS}, which we repeat here for convenience
\begin{equation}\label{KSnum}
\pat u+\pax\left(\frac{1}{2} u^2\right)= \Lambda^\gamma u -\epsilon\Lambda^{1+\delta}u,
\end{equation}
with $2\pi$-periodic boundary conditions. We approximate the spatial part
by a pseudo-spectral scheme, typically using $2^{12}$--$2^{14}$ Fourier modes, and advance in time with an explicit method such as the \texttt{ode45} function in MATLAB.

In Figures~\ref{surf_evoepsilon}--\ref{evoepsilon}, we show a numerical solution of \eqref{KSnum} with $\delta=\gamma=1$ in $-\pi < x <\pi$ for initial data
\begin{equation}
u_0(x) =\cos x + e^{-x^2}\sin x.
\label{ic2}
\end{equation}
A primary ``viscous shock'' forms from the initial data, after which smaller ``viscous sub-shocks'' develop spontaneously throughout the interval. These sub-shocks grow, propagate toward the primary shock, and merge with it. The number of sub-shocks and their rate of formation increases as $\epsilon$ decreases. Some movies of the numerical simulations are available at

\small{
\begin{verbatim}http://youtu.be/8r0QMgxZJMk?list=PLUwnEWNEnlmhroc7JS_cZ2PLN6pe-HiX7\end{verbatim}}

\begin{figure}[h!]\centering
\includegraphics[scale=0.75]{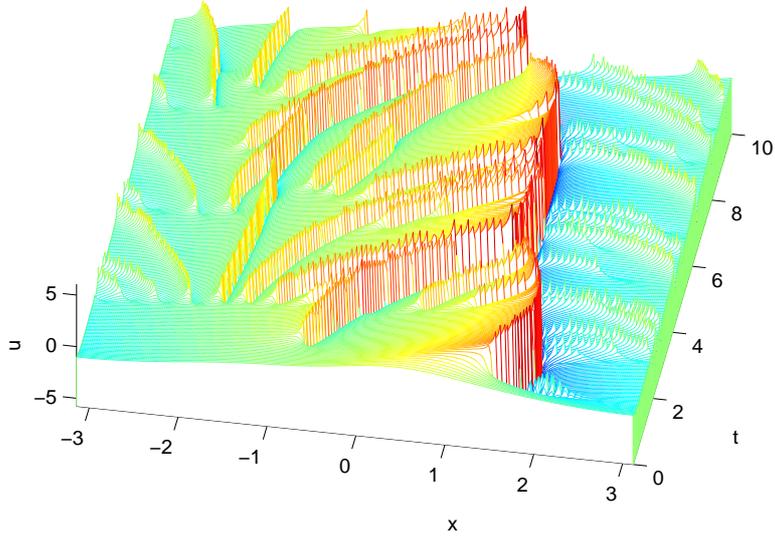}
\caption{A spatially periodic numerical solution of the nonlocal KS equation \eqref{KSnum} with $\delta=1$, $\gamma=1$, $\epsilon=0.01$, and initial data \eqref{ic2}.}
\label{surf_evoepsilon}
\end{figure}

\begin{figure}[h!]\centering
\includegraphics[scale=0.25]{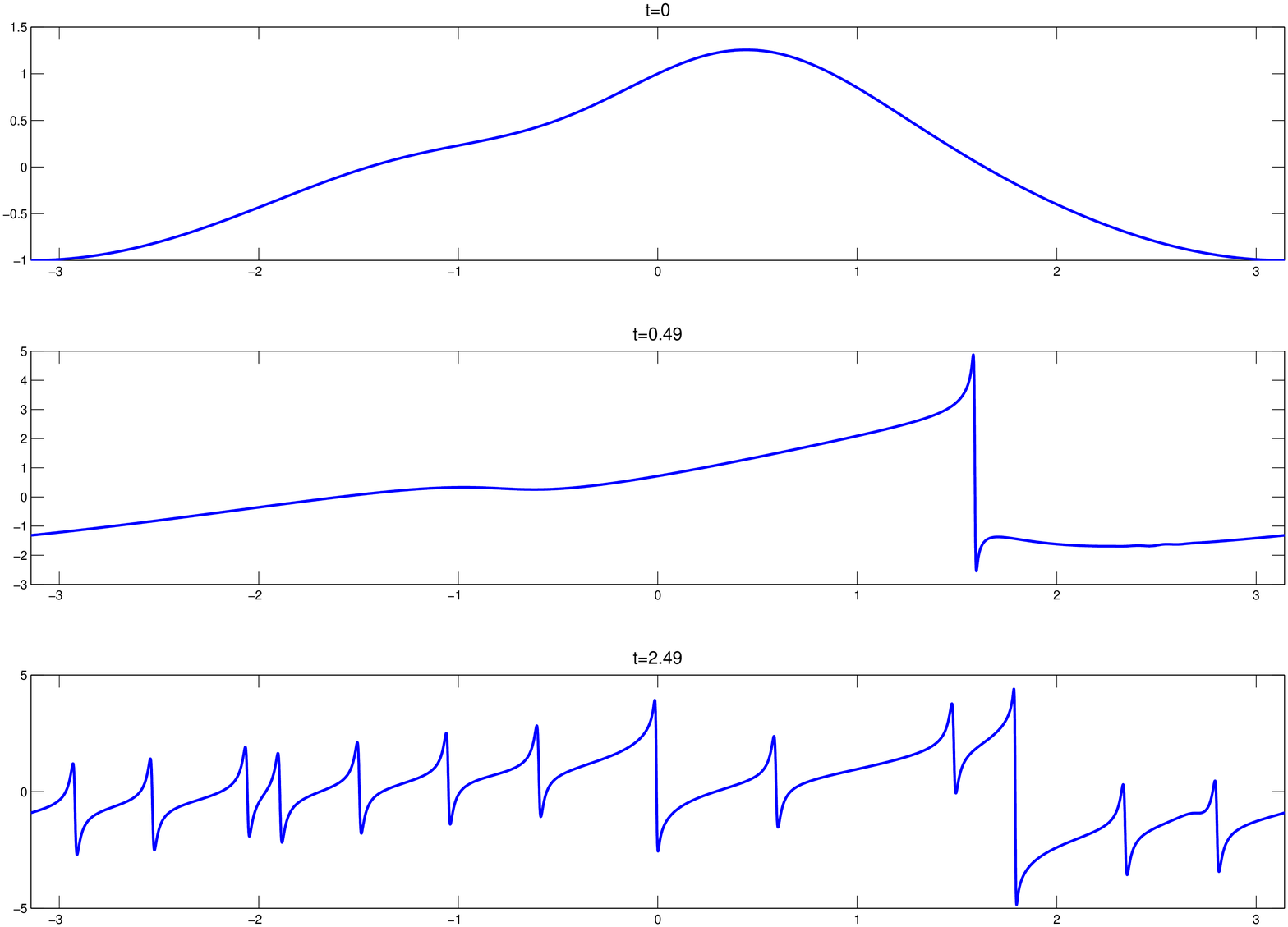}
\caption{A numerical solution of  \eqref{KSnum} with $\delta=1$, $\gamma=1$, $\epsilon=0.01$ and the same initial data as in Figure~\ref{surf_evoepsilon} at $t=0, 0.49, 2.49$.}
\label{evoepsilon}
\end{figure}

In Figure~\ref{surf_ks}, we show a solution of the usual KS equation \eqref{usualKS} with the same initial data as in Figure~\ref{surf_evoepsilon}.
The spatial ``shock-like'' structure of chaotic solutions of \eqref{KSnum} is qualitatively different from the ``worm-like'' structure of solutions of \eqref{usualKS}.

\begin{figure}[h!]\centering
\includegraphics[scale=0.75]{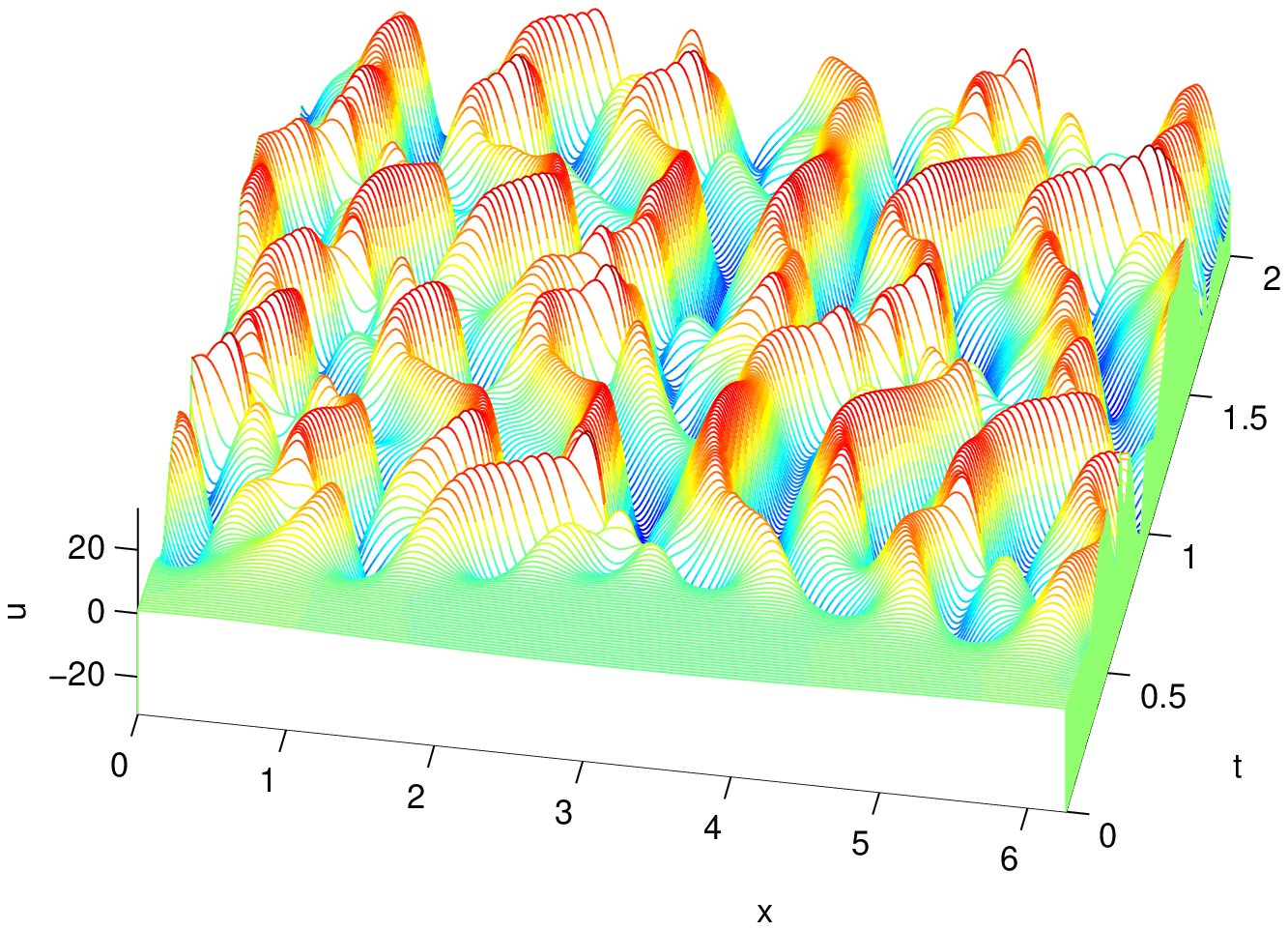}
\caption{A numerical solution of the usual KS equation \eqref{usualKS} with $\epsilon=0.01$ and initial data \eqref{ic2}.}
\label{surf_ks}
\end{figure}

Similar behavior is observed for \eqref{KSnum} with other values of $0 < \delta < 1$, $0 < \gamma<1+\delta$, and $\epsilon > 0$.
In Figure~\ref{evogamma} we show a solution for $\delta=0.5$, $\gamma=1.45$, and $\epsilon=0.8$, with the initial data
\begin{equation}
u_0(x) =\cos x.
\label{ic1}
\end{equation}
Chaotic behaviour occurs for larger values of $\epsilon$ as $\gamma$ gets closer to $1+\delta$.
This is consistent with the fact that the band of unstable wavenumbers $k$ for the linearization of \eqref{KSnum} at $u=0$
is given by
\[
0<k<k_*(\delta,\gamma,\epsilon)\qquad \mbox{where}\qquad \epsilon k_*^{1+\delta-\gamma} = 1.
\]
Thus, for a fixed value of $\epsilon$, the unstable band gets wider as $\gamma$ increases toward $1+\delta$.
(We have $k_\ast = 100$ in Figure~\ref{evoepsilon} and $k_\ast \approx 87$ in Figure~\ref{evogamma}.)

\begin{figure}[h!]\centering
\includegraphics[scale=0.25]{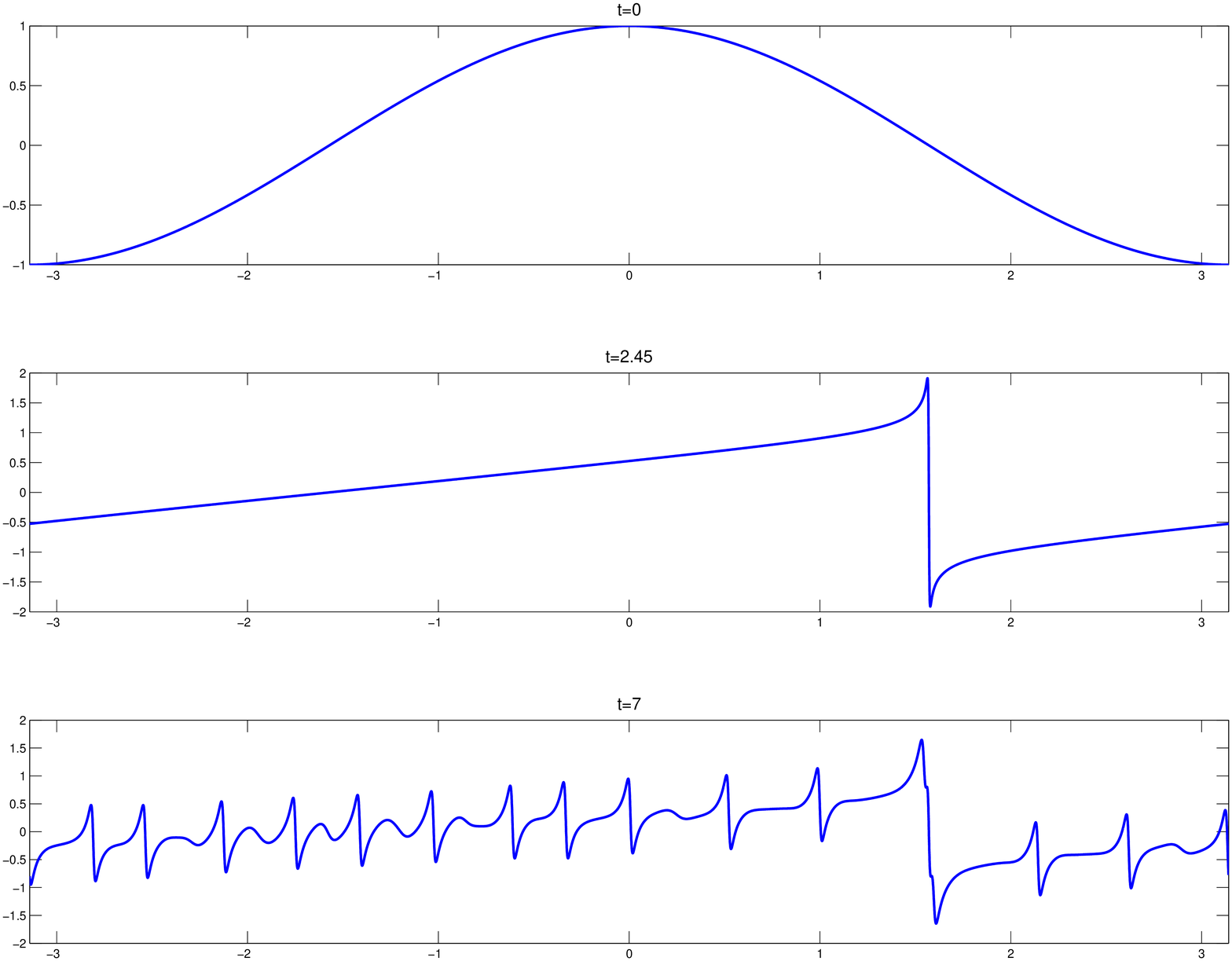}
\caption{A solution of \eqref{KSnum} with $\delta=0.5$, $\gamma=1.45$, $\epsilon=0.8$, and initial data \eqref{ic1}
at $t=0, 2.45, 7$.}
\label{evogamma}
\end{figure}




Figures~\ref{Gamma1}--\ref{Gamma3} show the transition to chaos for $\epsilon=0.5$, $\delta=0.5$ as $\gamma$ increases toward $1.5$. For each value of $\gamma$, we plot the $L^\infty$ and $L^2$ norms of $u$ at a number of different times after the solution has approached its time-asymptotic state. 
For $\gamma \lesssim 1.3$ the solution is steady, but for $\gamma \gtrsim 1.3$ its norms fluctuate wildly in time. We have $k_*\approx 32$ at transition.

Similarly, in Figures~\ref{epsilon1}--\ref{epsilon3}, we show the transition to chaos for $\delta=1$, $\gamma=1$ as $\epsilon$ decreases toward $0$. The solution
is steady for $\epsilon \gtrsim 0.04$ and chaotic for $\epsilon \lesssim 0.04$, with $k_\ast \approx 25$ at transition.


\begin{figure}[h!]\centering
\includegraphics[scale=0.35]{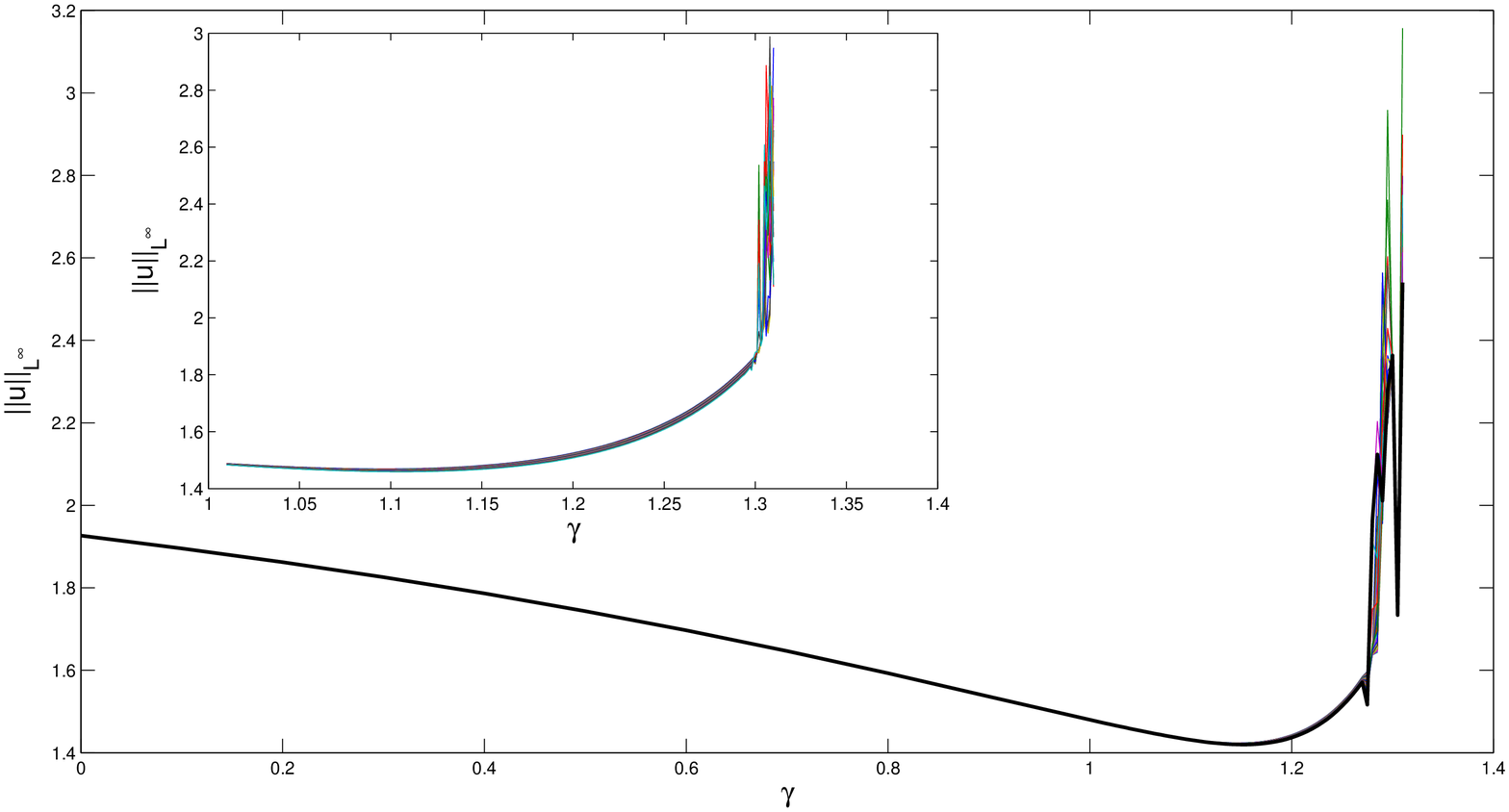}
\caption{The large time behavior of $\|u\|_{L^\infty}$ for different values of $\gamma\in (1,1.4)$ with $\delta=0.5$, $\epsilon=0.5$.}
\label{Gamma1}
\end{figure}
\begin{figure}[h!]\centering
\includegraphics[scale=0.35]{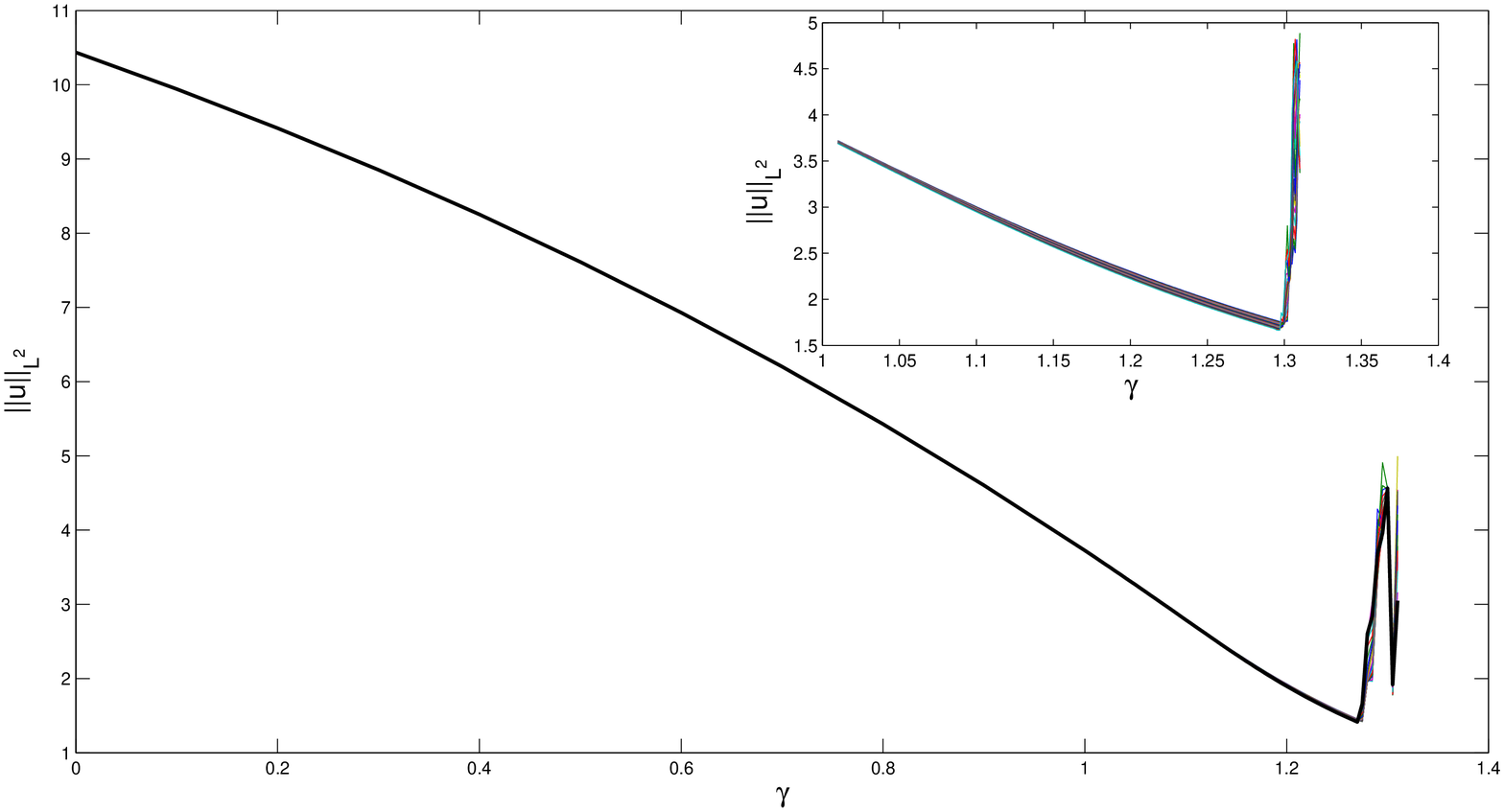}
\caption{The large time behavior of $\|u\|_{L^2}$ for different values of $\gamma\in (1,1.4)$ with $\delta=0.5$, $\epsilon=0.5$.}
\label{Gamma2}
\end{figure}
\begin{figure}[h!]\centering
\includegraphics[scale=0.35]{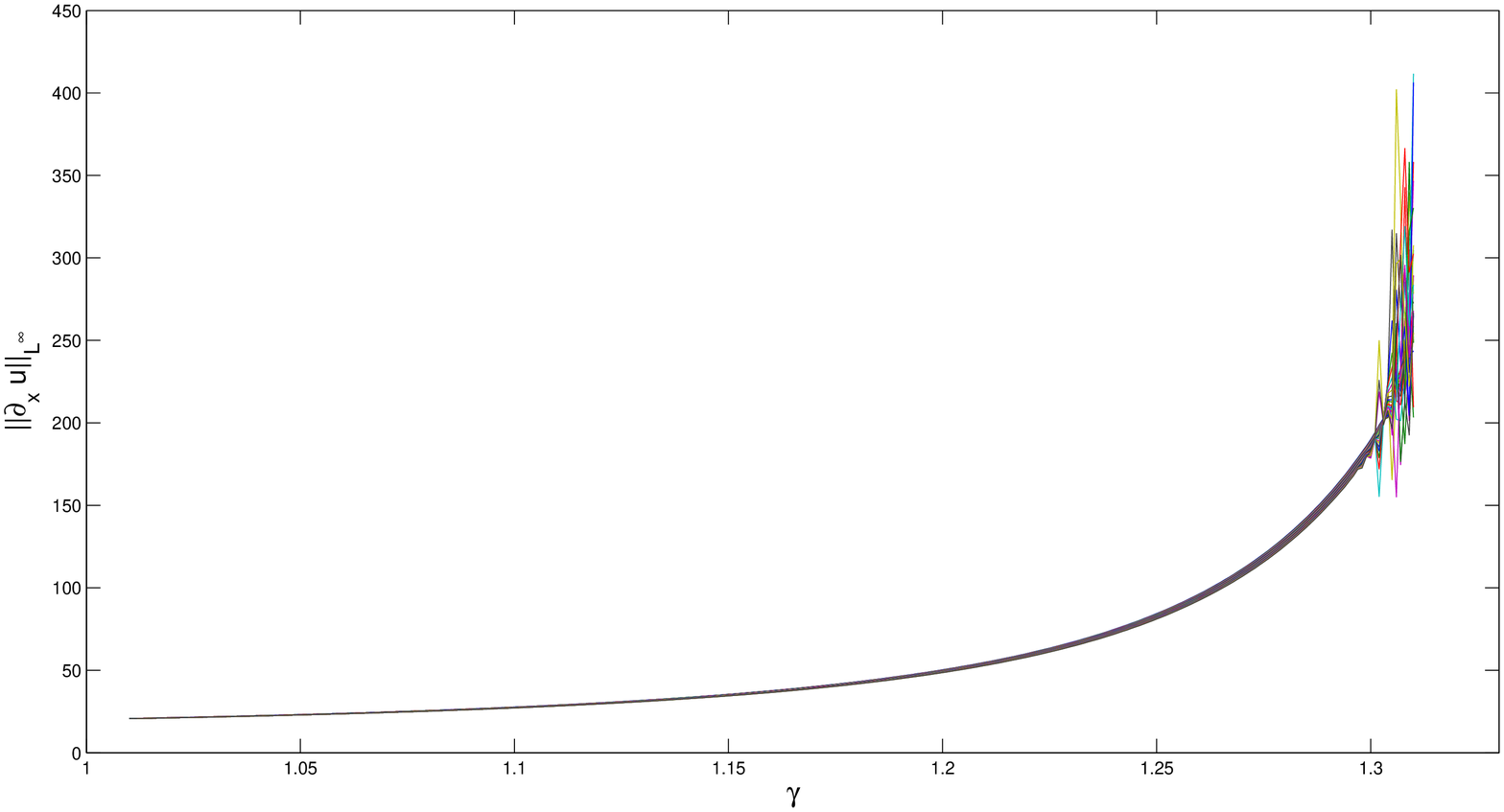}
\caption{The large time behavior of $\|\partial_x u\|_{L^\infty}$ for different values of $\gamma\in (1,1.4)$ with $\delta=0.5$, $\epsilon=0.5$.}
\label{Gamma3}
\end{figure}
\begin{figure}[h!]\centering
\includegraphics[scale=0.35]{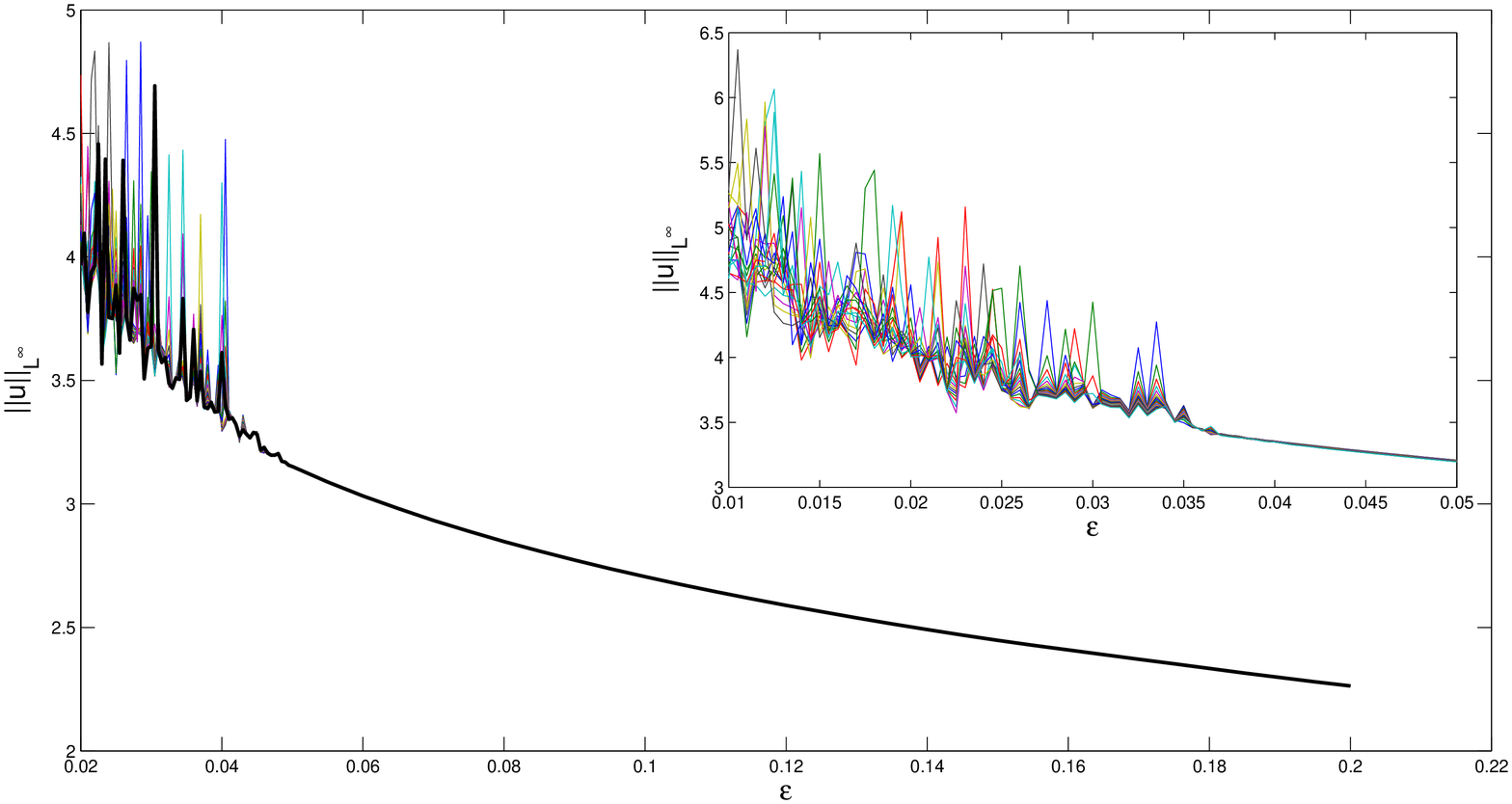}
\caption{The large time behaviour of $\|u\|_{L^\infty}$ for different values of $\epsilon\in(0.02,0.2)$ with $\delta=1$, $\gamma=1$.}
\label{epsilon1}
\end{figure}
\begin{figure}[h!]\centering
\includegraphics[scale=0.35]{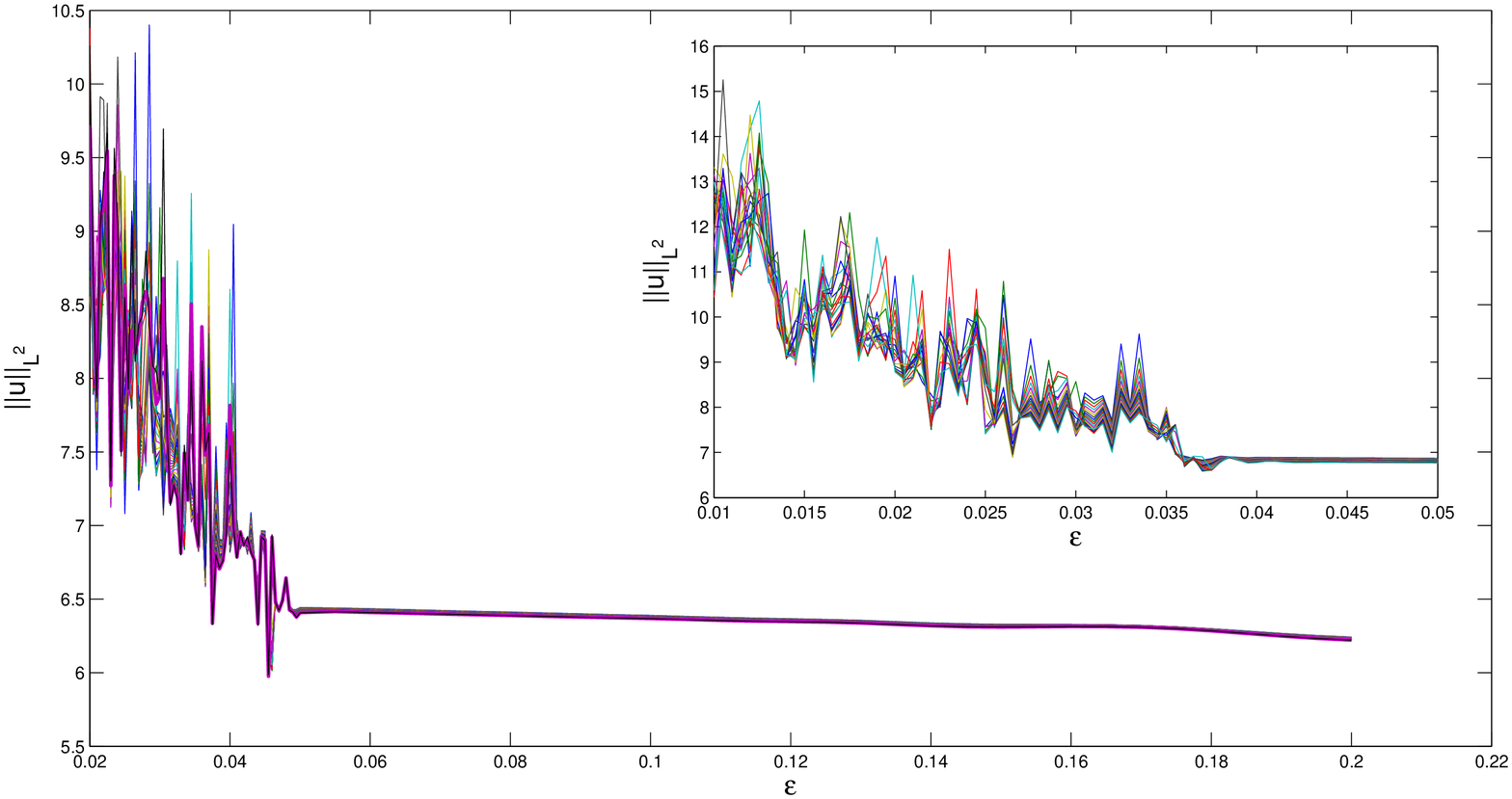}
\caption{The large time behaviour of $\|u\|_{L^2}$ for different values of $\epsilon\in(0.02,0.2)$ with $\delta=1$, $\gamma=1$.}
\label{epsilon2}
\end{figure}
\begin{figure}[h!]\centering
\includegraphics[scale=0.35]{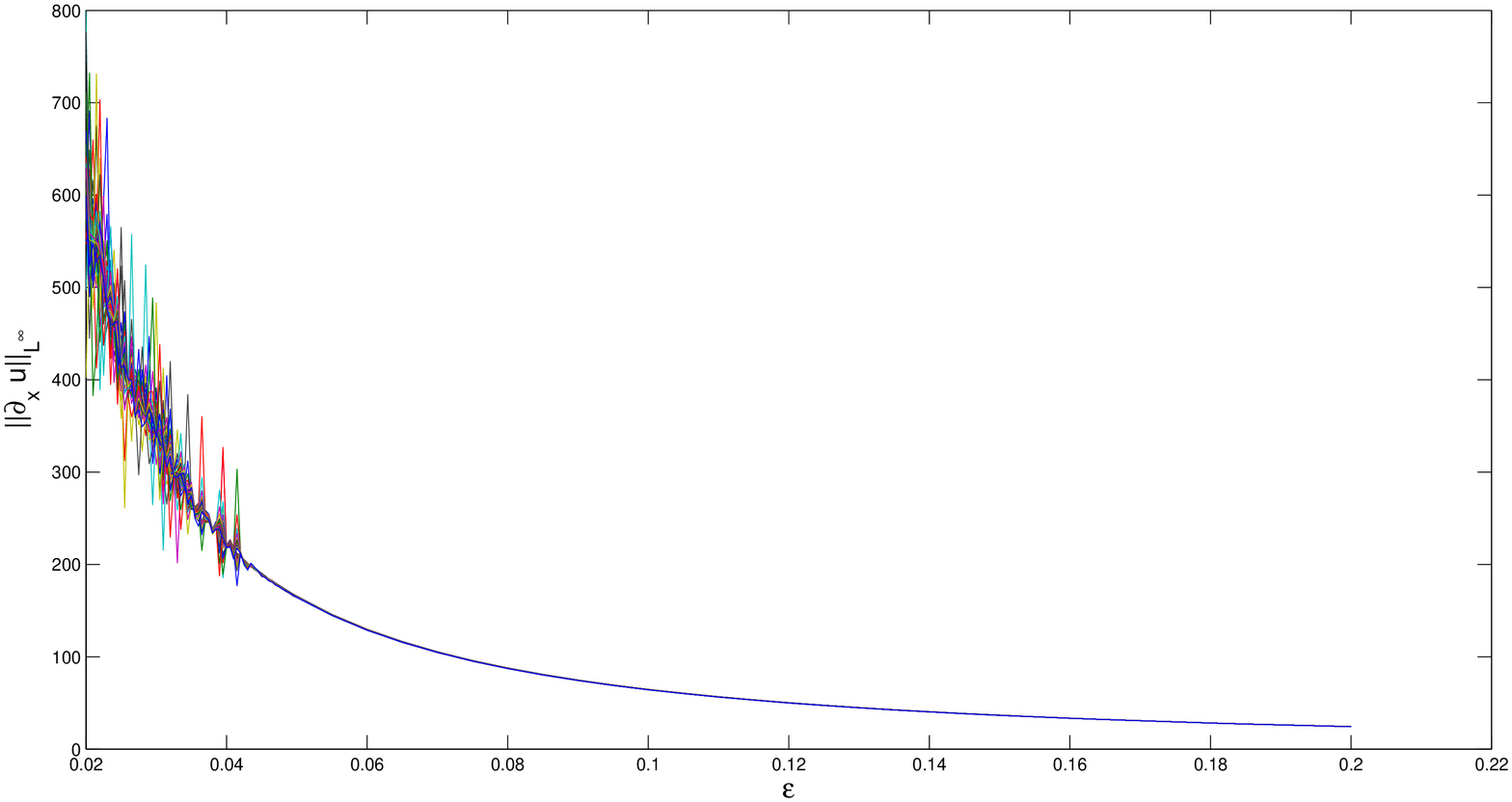}
\caption{The large time behaviour of $\|\partial_x u\|_{L^\infty}$ for different values of $\epsilon\in(0.02,0.2)$ with $\delta=1$, $\gamma=1$.}
\label{epsilon3}
\end{figure}

\appendix

\section{Auxiliary results}
In this appendix, we state without proof several results used in the paper.

We start with the Kato-Ponce inequality and the Kenig-Ponce-Vega commutator estimate for $[\Lambda^s,F]=\Lambda^s F-F\Lambda^s$, where $\Lambda=\sqrt{-\partial_x^2}$ (see \cite{grafakos2013kato, kato1988commutator, KenigPonceVega}).

\begin{lem}\label{commutator}
Let $F$, $G$ be two smooth functions that decay at infinity. Then, for $0<s\leq1$, we have
\begin{align*}
\|\Lambda^s(FG)-F\Lambda^s G\|_{L^p}&\leq C\left(\|F\|_{W^{s,p_1}}\|G\|_{L^{p_2})}\right.\\
&\quad\left.+\|G\|_{W^{s-1,p_3}}\|\pax F\|_{L^{p_4}}\right),
\end{align*}
with
$$
\frac{1}{p}=\frac{1}{p_1}+\frac{1}{p_2}=\frac{1}{p_3}+\frac{1}{p_4}\qquad \mbox{where $1\le p_2,p_4\le\infty$, $1<p, p_1,p_3 <\infty$}.
$$
Furthermore, if $s>\max\{0,1/p-1\}$, then
$$
\|\Lambda^s(FG)\|_{L^p(\RR)}\leq C\left(\|\Lambda^s F\|_{L^{p_1}(\RR)}\|G\|_{L^{p_2}(\RR)}\right.\\
\left.+\|\Lambda^s G\|_{L^{p_3}(\RR)}\|F\|_{L^{p_4}(\RR)}\right),
$$
with
$$
\frac{1}{p}=\frac{1}{p_1}+\frac{1}{p_2}=\frac{1}{p_3}+\frac{1}{p_4}\qquad \mbox{where $1/2<p<\infty,1<p_i\leq\infty$}.
$$
\end{lem}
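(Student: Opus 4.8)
The natural tool is a Littlewood--Paley / paradifferential analysis, essentially reproducing the arguments of \cite{kato1988commutator, KenigPonceVega, grafakos2013kato}. Write $\Delta_j$ for the Littlewood--Paley projection onto frequencies $\sim 2^j$ and $S_j=\sum_{k<j}\Delta_k$, and use Bony's paraproduct decomposition
$$
FG=\sum_j S_{j-2}F\,\Delta_j G+\sum_j \Delta_j F\,S_{j-2}G+\sum_{|j-k|\le 1}\Delta_j F\,\Delta_k G=:\Pi_F(G)+\Pi_G(F)+R(F,G).
$$
Both inequalities then reduce to applying $\Lambda^s$ to each of these three pieces (for the fractional Leibniz rule) and to the differences $\Lambda^s\Pi_F(G)-\Pi_F(\Lambda^s G)$, $\Lambda^s\Pi_G(F)-\Pi_{\Lambda^s G}(F)$, $\Lambda^sR(F,G)-R(F,\Lambda^s G)$ (for the commutator). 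Throughout, the $L^p$-norm of a frequency-localized sum is controlled by its Littlewood--Paley square function via the Fefferman--Stein inequality, and $\sup_j|S_{j-2}h|$ is dominated by the Hardy--Littlewood maximal function $\mathcal Mh$ (with the $L^1$ and $L^\infty$ endpoints treated separately, as in the references).

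For the fractional Leibniz rule I would argue as follows. In $\Pi_G(F)=\sum_j\Delta_j F\,S_{j-2}G$ the $j$-th summand lives at frequency $\sim 2^j$, so $\|\Lambda^s\Pi_G(F)\|_{L^p}$ is bounded by the square function of $\{2^{js}\Delta_j F\}$ times $\mathcal MG$, hence by $\|\Lambda^sF\|_{L^{p_1}}\|G\|_{L^{p_2}}$ after H\"older; the symmetric term $\Pi_F(G)$ gives $\|\Lambda^s G\|_{L^{p_3}}\|F\|_{L^{p_4}}$. The resonant term $R(F,G)$ is the delicate one: its $k$-th summand is localized only in $\{|\xi|\lesssim 2^k\}$, so after applying $\Lambda^s$ one must resum $\sum_l 2^{ls}\|\Delta_l(\text{pieces of frequency}\gtrsim 2^l)\|$, which converges precisely when $s>0$ if $p\ge 1$, and requires the $\ell^p$ (rather than $\ell^1$) triangle inequality of Grafakos--Oh when $1/2<p<1$; this is exactly where the hypothesis $s>\max\{0,1/p-1\}$ enters.

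For the commutator estimate (now $0<s\le 1$, and $p>1$, so no quasi-Banach subtlety), the pieces in which $F$ carries the higher frequency, namely $\Lambda^s\Pi_G(F)$ and $\Lambda^sR(F,G)$ together with their subtracted counterparts, need no cancellation and are estimated as above; trading one factor $\|\Delta_j F\|$ for $2^{-j}\|\pax\Delta_j F\|$ (Bernstein) converts these contributions into $\|\pax F\|_{L^{p_4}}\|\Lambda^{s-1}G\|_{L^{p_3}}$ and $\|\Lambda^sF\|_{L^{p_1}}\|G\|_{L^{p_2}}$, with the extra $2^{-j}$ absorbed into $\Lambda^{s-1}$ since $s-1\le 0$. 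The genuinely new contribution is the low-high commutator $\sum_j[\Lambda^s,S_{j-2}F]\Delta_jG$. Here I would Taylor-expand the multiplier: for $\xi$ of size $2^j$ and a low frequency $\eta$,
$$
|\xi-\eta|^s-|\xi|^s=-s|\xi|^{s-2}\xi\eta+O(|\xi|^{s-2}|\eta|^2),
$$
which displays this term as a Coifman--Meyer bilinear Fourier multiplier acting on $\pax S_{j-2}F$ (carrying the factor $\eta$) and on $\Lambda^{s-1}\Delta_jG$ (carrying $|\xi|^{s-1}$); boundedness of Coifman--Meyer operators $L^{p_4}\times L^{p_3}\to L^p$, together with the square function and H\"older, then gives the bound $\lesssim\|\pax F\|_{L^{p_4}}\|\Lambda^{s-1}G\|_{L^{p_3}}$. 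Summing all contributions yields the stated estimate, with $\|F\|_{W^{s,p_1}}$ appearing through the $\Pi_G$-type pieces and $\|G\|_{W^{s-1,p_3}}$ through the commutator and resonant pieces.

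The main obstacle is twofold: carrying the resonant term in the quasi-Banach range $1/2<p<1$ for the Leibniz rule, which forces the sharp threshold $s>1/p-1$ and is precisely the Grafakos--Oh refinement; and verifying that the low-high commutator loses exactly one derivative, so that the error genuinely closes in the borderline negative-regularity space $W^{s-1,p_3}$ when $s<1$, which is where the precise structure of the symbol $|\xi|^s$ is used. Finally, to obtain the periodic version invoked elsewhere in the paper one periodizes the kernels and treats the zero Fourier mode of $F$ (a bounded constant) by hand; alternatively, since all of the above is classical, one may simply quote \cite{kato1988commutator, KenigPonceVega, grafakos2013kato}.
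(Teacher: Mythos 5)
The paper does not prove this lemma: it appears in the appendix, which the authors explicitly preface with ``we state without proof several results used in the paper,'' deferring entirely to \cite{grafakos2013kato, kato1988commutator, KenigPonceVega}. Your outline correctly reproduces the standard Littlewood--Paley/paraproduct argument from exactly those references --- including the two genuinely delicate points, namely the resonant term in the quasi-Banach range $1/2<p<1$ (the Grafakos--Oh threshold $s>1/p-1$) and the first-order Taylor expansion of the symbol in the low--high commutator piece that produces the $\|\pax F\|_{L^{p_4}}\|G\|_{W^{s-1,p_3}}$ term --- so it is consistent with, and more informative than, what the paper provides.
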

We require the following uniform Gronwall lemma (see \cite{temambook}).
\begin{lem}\label{UGL}
Suppose that $g$, $h$, $y$ are non-negative, locally integrable functions on $(0,\infty)$ and ${dy}/{dt}$ is locally integrable. If
there are positive constants $a_1$, $a_2$, $a_3$, $r$ such that
$$
\frac{dy}{dt}\leq gy+h,\quad
\int_t^{t+r} g(s)ds\leq a_1,\;\int_t^{t+r} h(s)ds\leq a_2,\;\int_t^{t+r} y(s)ds\leq a_3
$$
for $t \ge 0$, then
$$
y(t+r)\leq \left(\frac{a_3}{r}+a_2\right)e^{a_1}.
$$
\end{lem}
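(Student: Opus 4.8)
The plan is to derive this uniform (averaged) form of Gronwall's inequality from the usual differential version, the new ingredient being that $y$ is controlled only through the integral bound $\int_t^{t+r}y\le a_3$ rather than pointwise, so one averages over the left endpoint of the interval. Fix $t\ge 0$ once and for all.

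First I would apply the classical Gronwall inequality to $dy/dt\le gy+h$ on an interval $[\sigma,t+r]$, with $\sigma\in[t,t+r]$ arbitrary; since $dy/dt$ is locally integrable, $y$ is locally absolutely continuous and the integrating-factor computation is legitimate, giving
\[
y(t+r)\le y(\sigma)\exp\!\left(\int_\sigma^{t+r}g\right)+\int_\sigma^{t+r}h(s)\exp\!\left(\int_s^{t+r}g\right)ds.
\]
Because $g\ge 0$ and $t\le\sigma\le s\le t+r$, each exponent is at most $\int_t^{t+r}g\le a_1$; using also $\int_\sigma^{t+r}h\le\int_t^{t+r}h\le a_2$ together with $y(\sigma)\ge 0$, this simplifies to
\[
y(t+r)\le e^{a_1}\big(y(\sigma)+a_2\big)\qquad\text{for every }\sigma\in[t,t+r].
\]

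Then I would integrate this inequality in $\sigma$ over $[t,t+r]$. The left-hand side is independent of $\sigma$, so it contributes $r\,y(t+r)$, while $\int_t^{t+r}y(\sigma)\,d\sigma\le a_3$ by hypothesis; dividing by $r$ yields $y(t+r)\le(a_3/r+a_2)e^{a_1}$, which is the assertion (valid for all $t\ge 0$, i.e.\ for $y$ evaluated at any time $\ge r$). There is essentially no obstacle here: the only points deserving a word of justification are the differential Gronwall step for an absolutely continuous $y$ and the measurability in $\sigma$ needed to integrate the penultimate inequality, and both follow immediately from the assumed local integrability of $g$, $h$, $y$ and $dy/dt$.
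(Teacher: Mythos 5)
Your proof is correct and is the standard argument for the uniform Gronwall lemma: apply the classical (integrating-factor) Gronwall inequality on $[\sigma,t+r]$, use $g\ge 0$ to bound every exponent by $a_1$, and then average over the left endpoint $\sigma\in[t,t+r]$ to convert the integral bound on $y$ into a pointwise bound at $t+r$. The paper states this lemma without proof, citing Temam's book, and your argument is precisely the one given there, so there is nothing to add.
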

We also use the following result on the time derivative of a complex function  (see \cite{ccfgl}).
\begin{lem}\label{complexevolution}
Suppose that $h(t)>0$ is a decreasing, smooth function of $t$, and
\[
\phi(x\pm i\zeta,t)=\sum_{|\xi|\leq N}A_{\xi}(t)e^{i\xi(x\pm i\zeta)}.
\]
Then
\begin{align*}
&\pat \sum_{\pm}\int_\TT|\phi(x\pm i\zeta,t)|^2dx\\
&\qquad\leq \frac{h'(t)}{10}\sum_{\pm}\int_\TT\Lambda\phi(x\pm i\zeta,t)\overline{\phi(x\pm i\zeta,t)}dx\\
&\qquad\quad-10h'(t)\sum_{\pm}\int_\TT\Lambda\phi(x,t)\overline{\phi(x,t)}dx\\
&\qquad\quad+2\Re\sum_{\pm}\int_\TT\pat\phi(x\pm i\zeta,t)\overline{\phi(x\pm i\zeta,t)}dx
\end{align*}
\end{lem}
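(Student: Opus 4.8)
The plan is to compute the total time derivative directly, using the fact that the evaluation width is $\zeta=h(t)$, and then reduce the resulting inequality to an elementary one-variable estimate via the (finite) Fourier series of $\phi$. Since $\phi(z,t)$ is holomorphic in $z$ while the boundary lines $x\pm ih(t)$ move with $t$, the chain rule gives
\[
\frac{d}{dt}\phi(x\pm ih(t),t)=(\pat\phi)(x\pm ih(t),t)\pm i h'(t)\,\pax\phi(x\pm ih(t),t),
\]
where I have used $\partial_z\phi=\pax\phi$ along each line. Writing $\frac{d}{dt}|\phi|^2=2\Re\left(\overline\phi\,\tfrac{d}{dt}\phi\right)$ and summing over $\pm$, the explicit piece $2\Re\sum_\pm\int_\TT\overline\phi\,\pat\phi\,dx$ is exactly the last term on the right-hand side of the claimed inequality. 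Hence the whole problem reduces to bounding the ``moving-contour'' contribution $2\Re\sum_\pm\int_\TT\overline{\phi(x\pm ih(t))}(\pm ih'(t))\pax\phi(x\pm ih(t))\,dx$ by the first two terms on the right.

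First I would pass to Fourier variables, which is legitimate because $\phi$ is a trigonometric polynomial, so all sums are finite. Using $\phi(x\pm i\zeta,t)=\sum_{|\xi|\le N}A_\xi(t)e^{\mp\xi\zeta}e^{i\xi x}$ and Parseval, short computations give
\[
\sum_\pm\int_\TT|\phi(x\pm i\zeta)|^2\,dx=4\pi\sum_\xi|A_\xi|^2\cosh(2\xi\zeta),
\]
\[
\sum_\pm\int_\TT\Lambda\phi(x\pm i\zeta)\overline{\phi(x\pm i\zeta)}\,dx=4\pi\sum_\xi|\xi||A_\xi|^2\cosh(2\xi\zeta),
\qquad
\sum_\pm\int_\TT\Lambda\phi(x)\overline{\phi(x)}\,dx=4\pi\sum_\xi|\xi||A_\xi|^2,
\]
while the moving-contour term equals $8\pi h'(t)\sum_\xi \xi\sinh(2\xi\zeta)|A_\xi|^2=8\pi h'(t)\sum_\xi|\xi|\sinh(2|\xi|\zeta)|A_\xi|^2$. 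The key structural point is that the $+$ and $-$ strips enter the norm terms through the even factor $\cosh(2\xi\zeta)$, whereas in the moving-contour term the factors $\pm i$ make them combine through the odd factor $\sinh(2\xi\zeta)$; the apparent sign clash is resolved because $\xi\sinh(2\xi\zeta)=|\xi|\sinh(2|\xi|\zeta)\ge0$ is even and nonnegative in $\xi$.

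With these identities in hand, and recalling $h'(t)\le0$ since $h$ is decreasing, the claimed inequality (after dividing by the common factor $4\pi h'(t)$, which reverses the inequality) reduces to the per-mode estimate
\[
2|\xi|\sinh(2|\xi|\zeta)\ge|\xi|\left(\frac{\cosh(2|\xi|\zeta)}{10}-10\right),
\]
i.e., writing $y=2|\xi|\zeta\ge0$, to $2\sinh y\ge\tfrac{1}{10}\cosh y-10$ for all $y\ge0$. This is elementary: in exponential form it reads $\frac{19}{20}e^{y}-\frac{21}{20}e^{-y}+10\ge0$, and for $y\ge0$ the left side is at least $\frac{19}{20}-\frac{21}{20}+10>0$. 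Summing the per-mode bounds against the nonnegative weights $|A_\xi|^2$ then yields the lemma. I expect the main obstacle to be bookkeeping rather than analysis: one must correctly account for the moving evaluation line via holomorphy and keep careful track of the opposite signs $\pm i h'(t)$ in the two strips, since it is precisely their combination that converts the troublesome term into the nonnegative, $\sinh$-weighted form that is then absorbed by the $\cosh$-weighted norm terms.
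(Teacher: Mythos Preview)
The paper does not actually prove this lemma; it is listed in the appendix among ``auxiliary results'' stated without proof, with a reference to \cite{ccfgl}. So there is no in-paper proof to compare against.

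Your argument is correct. The chain-rule computation with $\zeta=h(t)$ is the right starting point, and your Fourier identities are accurate: the $\pm$ sum produces $\cosh(2\xi\zeta)$ for the norm terms and $\sinh(2\xi\zeta)$ for the moving-contour term, with the latter becoming $|\xi|\sinh(2|\xi|\zeta)\ge0$ by parity. After factoring out $4\pi h'(t)\le0$, the per-mode inequality $2\sinh y\ge\tfrac{1}{10}\cosh y-10$ for $y\ge0$ is exactly what is needed, and your verification $\tfrac{19}{20}e^{y}-\tfrac{21}{20}e^{-y}+10\ge 9.9>0$ at $y=0$ (with the left side increasing in $y$) settles it. One minor remark: when $h'(t)=0$ you cannot divide by it, but then both the moving-contour term and the first two right-hand-side terms vanish, so the inequality is trivial at such $t$.
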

The last Lemma concerns the number of wild spatial oscillations of an analytic function (see \cite{AnalyticityKuramotoGrujic} and the references therein)
\begin{lem}\label{grujic}
Let $L$, $\tau>0$, and let $u$ be analytic in the neighborhood of $\{z: |\Im z|\leq \tau\}$ and $L$-periodic in the $x$-direction. Then, for any $\mu>0$, $[0, L]=I_\mu\cup R_\mu$, where $I_\mu$ is an union of at most $[\frac{2L}{\tau}]$ intervals open in $[0, L]$, and
\begin{itemize}
\item $|\pax u(x)| \leq \mu, \text{ for all }x\in I_\mu,$
\item $\card\{x \in R_\mu : \pax u(x)=0\}\leq
\frac{2}{\log 2}\frac{L}{\tau}\log\left(\frac{\max_{|\Im z|\leq \tau}|\pax u(z)|}{\mu}\right).$
\end{itemize}
\end{lem}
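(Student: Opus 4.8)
The plan is to prove the lemma by reducing it to a classical zero-counting estimate for analytic functions, applied to the analytic, $L$-periodic function $v := \pax u$ (which is analytic in a neighborhood of $\{z : |\Im z|\le\tau\}$ by hypothesis). Throughout, write $M := \max_{|\Im z|\le\tau}|\pax u(z)|$ for the quantity appearing on the right-hand side. The entire argument rests on a consequence of Jensen's formula, which I would record first: if $g$ is analytic on a neighborhood of $\overline{D(z_0,R)}$ with $g(z_0)\neq 0$, and $n(r)$ denotes the number of zeros of $g$ in $D(z_0,r)$ counted with multiplicity, then for $0<r<R$,
\[
n(r)\le \frac{1}{\log(R/r)}\,\log\!\left(\frac{\max_{|z-z_0|=R}|g(z)|}{|g(z_0)|}\right).
\]
This follows from the integrated Jensen identity $\frac{1}{2\pi}\int_0^{2\pi}\log|g(z_0+Re^{i\theta})|\,d\theta=\log|g(z_0)|+\int_0^R n(s)/s\,ds$ together with the monotonicity of $n$, which gives $\int_0^R n(s)/s\,ds\ge n(r)\log(R/r)$. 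Choosing the radius ratio $R/r=2$ produces the constant $1/\log 2$ that appears in the statement.

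Next I would set up the covering. Partition $[0,L]$ into at most $[2L/\tau]$ intervals $Q_i$, each of length at most $\tau/2$, and declare $Q_i$ \emph{good} if $|v(x)|\le\mu$ for every $x\in Q_i$ and \emph{bad} otherwise. Let $I_\mu$ be the union of the interiors of the good intervals and set $R_\mu=[0,L]\setminus I_\mu$. By construction $|\pax u(x)|\le\mu$ on $I_\mu$, and $I_\mu$ has at most $[2L/\tau]$ connected components; this yields the first bullet and the bound on the number of intervals at once.

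For the second bullet I would count zeros interval by interval. For each bad interval $Q_i$, fix a point $p_i\in Q_i$ with $|v(p_i)|>\mu$. Since $p_i$ is real and $v$ is analytic on a \emph{neighborhood} of $\{|\Im z|\le\tau\}$, the closed disk $\overline{D(p_i,\tau)}$ lies in the domain of analyticity, and the neighborhood hypothesis lets me enlarge both radii infinitesimally so that $Q_i$ sits strictly inside the inner disk. Because $p_i\in Q_i$ and $|Q_i|\le\tau/2$, every zero of $v$ in $Q_i$ lies in $D(p_i,\tau/2)$. Applying the Jensen estimate with $z_0=p_i$, $r=\tau/2$, $R=\tau$, and using $|v(p_i)|>\mu$ together with $\max_{|z-p_i|=\tau}|v|\le M$ (valid since $|z-p_i|=\tau$ with $p_i$ real forces $|\Im z|\le\tau$), I obtain at most $\frac{1}{\log 2}\log(M/\mu)$ zeros of $v$ in $Q_i$. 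Summing over the at most $[2L/\tau]$ bad intervals gives
\[
\card\{x\in R_\mu : \pax u(x)=0\}\le \frac{2L}{\tau}\cdot\frac{1}{\log 2}\log\frac{M}{\mu}=\frac{2}{\log 2}\,\frac{L}{\tau}\,\log\frac{M}{\mu},
\]
which is the claimed bound.

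The main obstacle is the Jensen-type zero count and pinning down the geometry so that the outer-to-inner radius ratio is exactly $2$, which is what produces the sharp $1/\log 2$ constant. The two delicate points are ensuring the disks $D(p_i,\tau)$ stay inside the analyticity neighborhood — handled by the hypothesis of analyticity on a full neighborhood of the strip, which permits the infinitesimal enlargement of radii needed to absorb zeros on the boundary circle — and avoiding double counting, which is automatic since each real zero lies in a unique partition interval $Q_i$ and is charged only to that interval's disk. The passage from length-$\tau/2$ intervals to the count $[2L/\tau]$ is a routine rounding detail absorbed by the ``at most'' in the statement.
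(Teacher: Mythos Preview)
The paper does not prove this lemma: it appears in the appendix under ``we state without proof several results used in the paper'' and is attributed to Gruji\'c \cite{AnalyticityKuramotoGrujic}. Your argument is the standard one from that reference --- partition the period into subintervals of length comparable to $\tau/2$, separate ``good'' (derivative small) from ``bad'' (derivative exceeds $\mu$ somewhere) intervals, and on each bad interval apply the Jensen zero-count centered at a point where $|\pax u|>\mu$ with radii $\tau/2$ and $\tau$ to get the $1/\log 2$ factor --- and it is correct in all essentials.

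Two small points worth tightening. First, you cannot always partition $[0,L]$ into $[2L/\tau]$ intervals of length at most $\tau/2$ when $2L/\tau$ is not an integer (the total length falls short); the honest count is $\lceil 2L/\tau\rceil$, which differs from $[2L/\tau]$ by at most one. This is a cosmetic issue already present in the lemma as stated and does not affect your method. Second, with $I_\mu$ defined as the union of \emph{interiors} of good intervals, the partition endpoints lying between two good intervals end up in $R_\mu$ but are not covered by any bad-interval disk; a zero of $\pax u$ sitting exactly there would escape your count. The cleanest fix is to take $I_\mu$ to be the interior of the union of the closed good intervals, which absorbs those endpoints and keeps $I_\mu$ open with no more components than before.
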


\bibliographystyle{abbrv}

\end{document}